\newtheorem{theorem}{Theorem}[section] 
\newtheorem{proposition}[theorem]{Proposition} 
\newtheorem{conjecture}[theorem]{Conjecture} 
\newtheorem{corollary}[theorem]{Corollary}
\newtheorem{lemma}[theorem]{Lemma}
\newtheorem{remark}[theorem]{Remark}
\newcommand{\QQ}{\mathbb{Q}}
\newcommand{\HH}{\mathsf{H}}
\newcommand{\fq}{\mathbb{F}_q}
\newcommand{\aff}[1]{A_{#1}}
\newcommand{\gm}{\mathbb{G}_m}
\newcommand{\WP}{\operatorname{WP}}
\newcommand{\gen}{\mathtt{generic}}
\newcommand{\ver}{\mathtt{versal}}
\renewcommand{\phi}{\varphi}
\newcommand{\aut}{\operatorname{Aut}}
\newcommand{\TA}{\mathbb{A}}
\newcommand{\TD}{\mathbb{D}}
\newcommand{\TE}{\mathbb{E}}
\newcommand{\red}{{red}\xspace}
\newcommand{\green}{{green}\xspace}
\newcommand{\orange}{{orange}\xspace}
\renewcommand{\mod}{\operatorname{mod}}
\newcommand{\ag}{\mathcal{G}}
\newcommand{\rk}{\operatorname{rk}}
\newenvironment{proof}{\begin{trivlist}\item{\bf{Proof.}}}
  {\hfill\rule{2mm}{2mm}\end{trivlist}}
\title{On some varieties\\ associated with trees}
\author{F. Chapoton}
\date{\today}
\begin{document}

\maketitle

\begin{abstract}
  This article considers some affine algebraic varieties attached to
  finite trees and closely related to cluster algebras. Their
  definition involves a canonical coloring of vertices of trees into
  three colors. These varieties are proved to be smooth and to admit
  sometimes free actions of algebraic tori. Some results are obtained
  on their number of points over finite fields and on their
  cohomology.
\end{abstract}

\tableofcontents

\section*{Introduction}

The theory of cluster algebras, introduced by S. Fomin and
A. Zelevinsky around 2000 \cite{cluster1, cluster2}, was motivated
initially by the study of total positivity in Lie groups and canonical
bases in quantum groups. It has since then developed rapidly in many
directions, among which one can cite (for example) triangulated
categories \cite{bmrrt}, triangulations of surfaces \cite{fst1} and Poisson
geometry \cite{gsv, gsv_book}.

Because cluster algebras are commutative algebras endowed with more
structure, it is natural to study them from the point of view of
algebraic geometry. The geometric study of cluster algebras has
nevertheless been mostly concentrated on aspects related to Poisson
geometry or symplectic geometry. The appearance of the known cluster
structure on coordinate rings of grassmannians in a physical context
\cite{arkani_hamed} has raised recently the interest in the
computation of integrals on the varieties associated with cluster
algebras. The natural context for this is of course the cohomology ring.

The present article aims to study some varieties closely related to
the spectrum of cluster algebras, and their cohomology rings. General
cluster algebras are defined using a quiver or a skew-symmetric
matrix. For our purposes, one needs as a starting point a presentation
by generators and relations of the cluster algebras. This is available
for cluster algebras with an acyclic quiver \cite{cluster3} and in a
few other cases (see for example \cite{mullerLA}). The choice has been
made here to restrict to a still smaller class, namely cluster
algebras with a quiver which is a tree, in the hope that the answers
may be simpler in that case, and also because all finite Dynkin
diagrams are trees.

Cluster algebras come with a subalgebra generated by so-called frozen
(or coefficient) variables, which are invertible elements. This
corresponds to a morphism from the spectrum of the cluster algebra to
an algebraic torus. At the start of this work, our intention was to
study both the fibers of this map and the spectrum in full. Later it
turned out that it is possible (for cluster algebras associated with
trees) to define more general varieties.

Cohomology and number of points on similar varieties have been
considered in some previous works \cite{gekhtman, mullerWP,
  nombre_points}. Some results of these articles will be recalled when
necessary.

\medskip

The article is organized as follows.

In the first section, one recalls a canonical tri-coloring of the
vertices of trees, originally defined in
\cite{coulomb_bauer,coulomb,zito} and not so well-known. This coloring
is closely connected to matchings and independent sets in the
trees. It will be used in an intensive way in the rest of
the article, as it enters in the very definition of the varieties
under study. One introduces the notion of red-green components of a
tree, and defines an important integer invariant, the dimension of a
tree.

The second section is devoted to the definition of the varieties. This
is rather involved, and the definition itself only appears after a
long preparation. One first considers a very general family of
varieties, depending one many invertible parameters. By considering
these varieties as objects in a groupoid, one can reduce this family
to a much smaller one, with less parameters. One proves that every
variety in the big family is isomorphic to a variety in the small
family. One also introduces an explicit condition of genericity. Then
everything is ready for the definition, which involves making an
independent choice for every red-green component of the tree.

The third section is devoted to some geometric properties of these
varieties. One proves by induction that all these varieties are
smooth, by finding explicit coverings by products of varieties of the
same type and algebraic tori. One next shows that some of these
varieties are endowed with a free torus action, which turns them into
principal torus bundles.

The fourth section turns to the study of the number of points over
finite fields. One shows by induction that the number of points is a
polynomial in the cardinality $q$ of the finite field. This is done by
finding an appropriate decomposition into pieces isomorphic to
products of varieties of the same type and algebraic tori. One then
gives formulas for some classical trees, including Dynkin
diagrams. One also obtains (Prop. \ref{decoupe_independante}) a
general decomposition as a disjoint union of products of tori and
affine spaces (indexed by independent sets), which allows to compute
the Euler characteristic.

The three next sections (5,6 and 7) deal with some computations
regarding the cohomology rings. Section 5 is a very short reminder
about known results about differential forms on varieties associated
with cluster algebras, and about the general theory of (mixed) Hodge
structure on the cohomology ring of algebraic varieties. Section 6
deals with some examples of trees, namely linear trees (the case of
which forms a useful building stone) and some trees of shape $H$ with
no parameters. Section 7 is about varieties where parameters have been
given a generic value. Our results about cohomology are rather
partial, restricted to special cases, but there does not seem to be
any simple general answer. The prominent missing case is in type $\TA$
with an odd number of vertices, where one proposes a conjecture.

The appendix A presents a simple algorithm for the computation of the
canonical coloring of trees. This algorithm is not needed in the rest
of the article.

\medskip

Let us finish this introduction by a few side remarks.

Another interesting question which has not been considered here is the study of the real points of the same varieties, and
their cohomology. This is probably also rather complicated, but
certainly worth looking at.

There seems to be some kind of vague analogy between the
counting-points polynomials considered here and the characteristic
polynomials of bipartite Coxeter elements (cf \cite{mcmullen} and
\cite{steko}), namely the general look and feel of these two families
of polynomials are similar in various points (including some relations
to Pisot and Salem numbers).

At the end of section 1.2 of \cite{spectra_book}, one can find some
speculations about the idea of ``quadratic spectra'' for graphs, that
would be an analog of the usual spectrum but related to quadratic
equations instead of linear equations. Maybe one can argue that the
cluster varieties considered here and their counting-point polynomials
are a good candidate for such a quadratic spectrum (even if they
involve polynomial relations of arbitrary degree).

\medskip

This work has been supported by the ANR program CARMA (ANR-12-BS01-0017-02).

\section{Combinatorics of trees}

\label{section1}

In this article, a \textbf{tree} is a finite connected and
simply-connected graph. A \textbf{leaf} is a vertex with at most one
neighbor. A \textbf{forest} is a disjoint union of trees.

\subsection{Canonical red-orange-green coloring of trees}

In this section, one recalls a canonical coloring of the vertices of
all trees, using the colors \red, \orange and \green. This coloring
has first appeared in an article by J. Zito \cite{zito} and has been
studied independently later by S. Coulomb and M. Bauer in
\cite{coulomb, coulomb_bauer}.

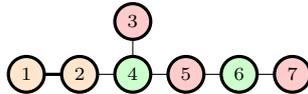
\begin{figure}[h]
\begin{center}
{\scalefont{0.7}
\begin{tikzpicture}[scale=0.7]
\tikzstyle{every node}=[draw,shape=circle,very thick,fill=white]

\draw (1,0) -- (2,0) -- (3,0) node[fill=red!20] {5} -- (4,0) node[fill=green!20] {6} -- (5,0) node[fill=red!20]{7};

\draw [very thick] (0,0) node[fill=orange!20] {1} -- (1,0) node[fill=orange!20] {2};

\draw (2,1) node[fill=red!20] {3} -- (2,0) node[fill=green!20] {4};
\end{tikzpicture}}
\end{center}
  \caption{Canonical coloring: \{1,2\} are orange, \{4,6\} green and \{3,5,7\} red}
  \label{fig:exemple3col}
\end{figure}

Let us consider a tree $T$. A \textbf{vertex cover} of $T$ is a subset
$S$ of the vertices of $T$ such that every edge of $T$ has at least
one end in $S$. A \textbf{minimum vertex cover} of $T$ is a vertex
cover of minimal cardinality among all vertex covers of $T$.

Let us use this notion to color the vertices of $T$ according to the
following rule: a vertex $v$ is
\begin{itemize}
\item \green if $v$ is present in all minimum vertex covers,
\item \orange if $v$ is present in some but not all minimum vertex covers,
\item \red if $v$ is present in no minimum vertex covers.
\end{itemize}
The colors have been chosen to match this definition with traffic lights colors.

For the tree of figure \ref{fig:exemple3col}, the minimum vertex
covers are made of the two \green vertices $\{4,6\}$ and one of the two \orange
vertices $\{1,2\}$.

\begin{remark}
  By taking the complementary subset, there is a bijection between
  minimum vertex covers, and sets of non-adjacent vertices of maximal
  cardinality (\textbf{maximum independent sets}, also called maximum
  stable sets).
\end{remark}

\smallskip

This coloring is also related to maximum matchings of $T$. A
\textbf{matching} of $T$ is a set $D$ of edges of $T$, such that every
vertex belongs to at most one element of $D$. The elements of $D$ will
be called \textbf{dominoes}. A \textbf{maximum matching} is a matching
of maximal cardinality among all matchings of $T$.

Then, a vertex $v$ is
\begin{itemize}
\item \green if $v$ is present in all maximum matchings, in several
  different dominoes.
\item \orange if $v$ is present in all maximum matchings, always in
  the same domino.
\item \red if $v$ is absent in some maximum matchings.
\end{itemize}

The proof of the equivalence of these two descriptions of the coloring
can be found in \cite{coulomb_bauer}.

For the tree of figure \ref{fig:exemple3col}, the maximum matchings are
made of three dominoes, one of them being the edge between the two
\orange vertices $\{1,2\}$.

\begin{proposition}
  \label{precise_matching}
  The \orange vertices are matched in pairs by the unique domino in
  which they are contained in any maximum matching. In maximum
  matchings, \green vertices are matched with \red vertices in several
  different ways.
\end{proposition}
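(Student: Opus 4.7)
The plan is to dispatch both halves of the statement by exclusion: for each case I show that the partner in question cannot have two of the three possible colors, leaving only one possibility. The arguments rest on the matching-theoretic description of the coloring, together with one appeal to König's theorem (applicable since any tree is bipartite) for the only delicate case.

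For the first assertion, let $v$ be \orange with unique matching partner $w$, so that the edge $\{v,w\}$ lies in every maximum matching. Then $w$ is matched in every maximum matching, so $w$ is not \red; and since $w$'s partner is always $v$, the vertex $w$ does not vary its partner, so $w$ is not \green either. Hence $w$ must be \orange, and the two vertices are matched to each other by the same domino in every maximum matching.

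For the second assertion, let $v$ be \green and let $w$ be some partner of $v$ in a maximum matching $M$; I show $w$ is \red. If $w$ were \orange, then its unique partner would have to be $v$ (since they are matched in $M$), so $\{v,w\}$ would belong to every maximum matching, forcing $v$ always to be matched with $w$; this contradicts the defining property of \green. If $w$ were \green, I would invoke König's theorem in the bipartite form asserting that from $M$ one can extract a minimum vertex cover $C$ of size $|M|$ with the additional property that each edge of $M$ meets $C$ in exactly one endpoint. But \green vertices lie in every minimum vertex cover, so both $v$ and $w$ belong to $C$, contradicting the König property for the edge $\{v,w\} \in M$. Therefore $w$ is \red. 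The ``in several different ways'' part is then just the defining property of \green: the partner of $v$ genuinely varies among maximum matchings.

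The main obstacle is the green-green case. A more direct attempt via the symmetric difference of two maximum matchings $M, M'$ through the path $\dots - w'' - w - v - w' - \dots$ breaks down because the natural ``partial flip'' at $v$ or at $w$ does not leave a valid matching; one really needs a structural statement placing both $v$ and $w$ inside a common minimum vertex cover while simultaneously controlling the single edge $\{v,w\} \in M$. König's theorem provides exactly this structural statement and seems to be the cleanest route around the difficulty.
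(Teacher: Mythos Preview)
Your argument is correct. The paper itself does not prove this proposition at all: it simply cites \cite{coulomb_bauer}. So you are supplying a self-contained proof where the paper defers to the literature.

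Your treatment of the orange--orange case and of the green--orange exclusion is immediate from the matching description of the coloring, and is presumably how any proof would go. The interesting step is the green--green exclusion, where you invoke the constructive form of König's theorem: from any maximum matching $M$ one obtains a minimum vertex cover $C$ with $|C|=|M|$ in which each edge of $M$ contributes exactly one endpoint (this is what the standard alternating-path construction produces). Since green vertices lie in \emph{every} minimum vertex cover by the cover description of the coloring, both endpoints of a hypothetical green--green matching edge would land in $C$, contradicting the one-endpoint property. This is a clean and legitimate route; the only thing worth making explicit for a reader is that the ``one endpoint per matched edge'' clause is a feature of the König construction rather than of an arbitrary minimum vertex cover, which you do say but might emphasize.
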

\begin{proof}
  This is proved in \cite{coulomb_bauer}.
\end{proof}

\smallskip

This coloring has a third equivalent description, also given in
\cite{coulomb_bauer}.

It is the unique coloring of the vertices such that
\begin{itemize}
\item the induced forest on \orange vertices has a perfect matching,
\item every \green vertex has at least two \red neighbors,
\item every \red vertex has only \green neighbors.
\end{itemize}

It follows from this description that the coloring is stable by any of
the following operations:
\begin{itemize}
\item taking the induced forest on \orange vertices,
\item taking the induced forest on the union of \red and \green vertices,
\item removing a matched pair of \orange vertices,
\item removing a \green vertex.
\end{itemize}

An algorithm to compute the coloring is presented in appendix \ref{algosection}.
\begin{figure}\centering
  \includegraphics[height=4cm]{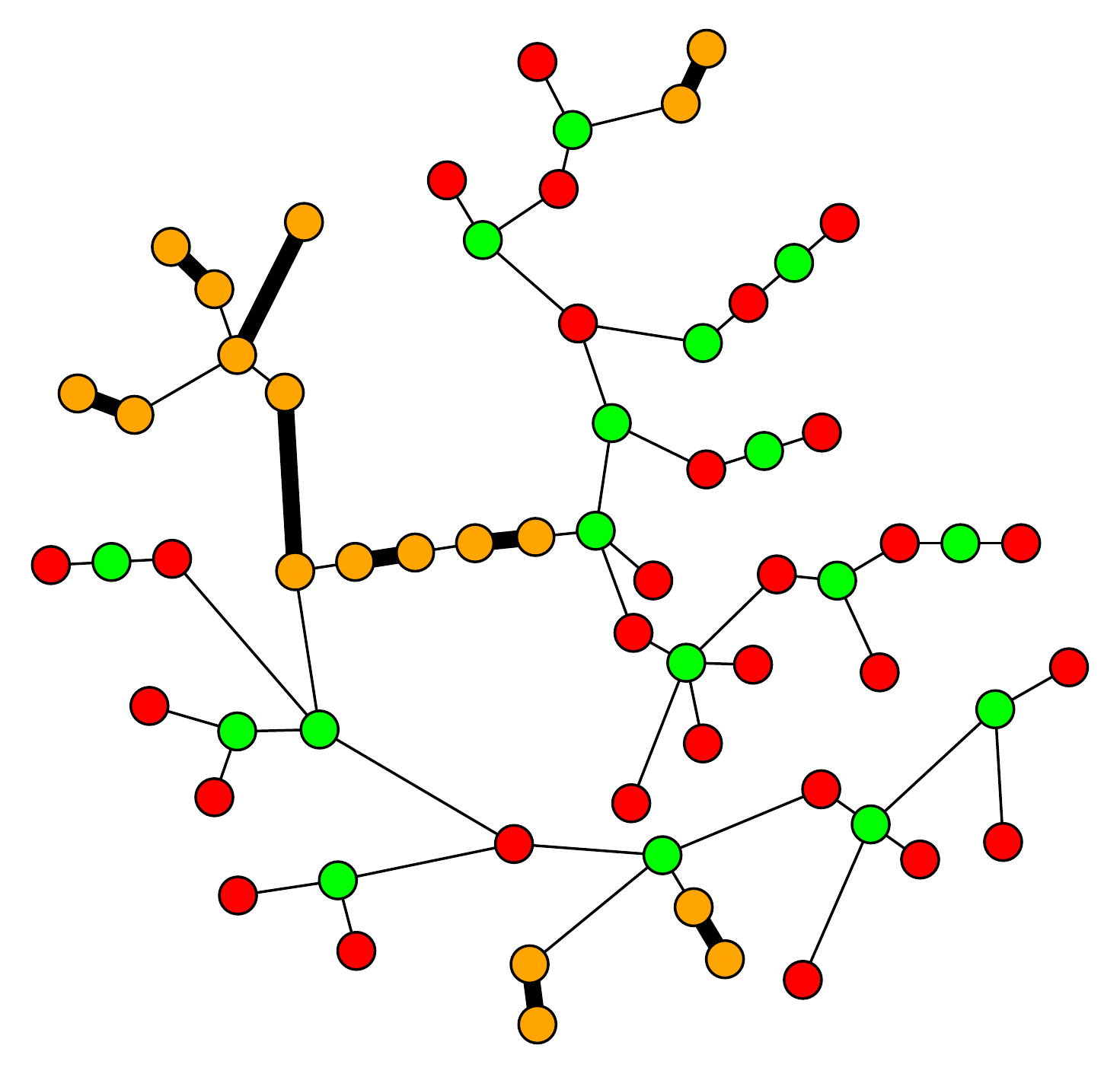}
  \caption{A typical example}
\end{figure}


\subsection{Further properties of the coloring}

Let us first state a corollary of the third description of the coloring.

\begin{lemma}
  A tree admits a perfect matching if and only if all vertices are \orange.
\end{lemma}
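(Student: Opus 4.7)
The plan is to prove both implications directly from the third equivalent description of the coloring and from Proposition~\ref{precise_matching}, without needing any new machinery.

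For the ``if'' direction, suppose every vertex of $T$ is orange. Then the induced forest on orange vertices is $T$ itself, and the first bullet of the third description asserts that this forest has a perfect matching. So $T$ has a perfect matching.

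For the ``only if'' direction, suppose $T$ has a perfect matching $M$. I would first observe that $M$ has cardinality $n/2$ where $n$ is the number of vertices, and any matching has at most $n/2$ edges; hence every maximum matching has exactly $n/2$ edges and therefore covers every vertex, i.e.\ every maximum matching is itself perfect. In particular no vertex is red, since a red vertex would be missing from some maximum matching. The third description then forces the absence of green vertices as well: every green vertex needs at least two red neighbors, and there are no red vertices available. Thus all vertices must be orange.

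The argument is essentially a bookkeeping exercise once the characterizations already recalled are in hand; the only point that requires a moment of care is the observation that the existence of one perfect matching forces \emph{all} maximum matchings to be perfect, which is what rules out red vertices. No induction or case analysis is needed.
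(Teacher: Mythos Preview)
Your proof is correct. The ``if'' direction is identical to the paper's. For the ``only if'' direction you take a slightly different path: you use the maximum-matching characterization of the coloring to eliminate red vertices (since all maximum matchings are perfect), and then the third description to eliminate green vertices (no red neighbors available). The paper instead simply observes that declaring every vertex orange satisfies all three conditions of the third description (the first by hypothesis, the other two vacuously) and invokes the uniqueness of that description. Both arguments are short and valid; the paper's is marginally cleaner in that it appeals to a single characterization rather than mixing two, while yours has the virtue of explaining \emph{why} each non-orange color is impossible rather than just verifying a candidate.
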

\begin{proof}
  If all vertices are \orange, there is a perfect matching by the
  first condition in the third description. If the tree has a perfect
  matching, letting all the vertices be \orange gives a coloring which
  satisfies all the required conditions, and therefore is the correct
  one by uniqueness.
\end{proof}

Note that the maximum matching is unique for these trees. We will call
them \textbf{orange trees}. They are also known as perfect trees or
matched trees \cite{simion}.

\smallskip

Let $T$ be a tree. The \textbf{red-green components} of $T$ are the
connected components of the graph defined by keeping only the edges of
$T$ with one \red end and one \green end. Every red-green component is
a tree, which is moreover bipartite with only red leaves. In these
trees, every green vertex has valency at least two.

This kind of trees has been considered under the name of $bc$-trees in
the study of blocks and cut-vertices of graphs, see for example
\cite[Chap. 4]{harary}.

Even trees with no \orange vertex can have several such components,
because there can be edges with two green ends, and these edges are not kept in the red-green components.

By the third description of the coloring, the coloring is stable by
taking a red-green component.

A tree which is equal to its only red-green component will be called a
\textbf{red-green} tree.

\begin{lemma}
  The set of maximum matchings of a tree is in bijection with the
  product of the sets of maximum matchings of its red-green
  components.
\end{lemma}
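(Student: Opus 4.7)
The plan is to use Proposition~\ref{precise_matching} together with the third description of the coloring to construct an explicit bijection. Given a maximum matching $M$ of $T$, I would first observe that the orange vertices are matched in pairs by a fixed set $M_O$ of orange--orange dominoes, independent of $M$ (this is exactly Proposition~\ref{precise_matching}). Every remaining edge of $M$ has one green end and one red end: by Proposition~\ref{precise_matching} green vertices are always matched with red vertices in maximum matchings, and by the third description of the coloring red vertices have only green neighbors. Thus $M\setminus M_O$ is contained in the red--green subgraph, and since distinct red--green components are vertex-disjoint, it decomposes as a disjoint union of matchings $M_C := M\cap E(C)$, one per red--green component~$C$.

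The next step is to show that each $M_C$ is a maximum matching of $C$. Because every green vertex belongs to all maximum matchings and must be matched with a red neighbor, $M_C$ covers every green vertex of $C$, so $|M_C|$ equals the number of green vertices of~$C$, a quantity depending only on $C$. Applying the very same observation inside $C$ (using the stability of the coloring under passing to red--green components, recorded just before the statement of the lemma) shows that every maximum matching of $C$ has the same cardinality; hence $M_C$ is itself a maximum matching of~$C$. This also yields $|M| = |M_O| + \sum_C |M_C|$, a number independent of~$M$.

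For the reverse direction, I would start from an arbitrary tuple $(M_C)_C$ of maximum matchings of the red--green components and set $M := M_O \sqcup \bigsqcup_C M_C$. The pieces use pairwise disjoint vertex sets ($M_O$ only meets orange vertices, and the components are vertex-disjoint), so $M$ is a genuine matching of $T$; its cardinality $|M_O| + \sum_C |M_C|$ equals the maximum matching size computed in the previous paragraph, so $M$ is maximum. The two assignments $M \mapsto (M_C)_C$ and $(M_C)_C \mapsto M$ are manifestly inverse to each other, which gives the bijection.

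The only nontrivial point is the statement that the coloring of a red--green component $C$ (viewed as a tree on its own) coincides with the restriction of the coloring of $T$; this is an immediate consequence of the stability properties listed right after the third description and is what lets the characterization ``green $=$ present in all maximum matchings'' be reapplied within each $C$. Once this is granted, the whole argument reduces to the cardinality bookkeeping carried out above.
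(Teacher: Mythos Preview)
Your proof is correct and follows the same approach as the paper's: use Proposition~\ref{precise_matching} to split any maximum matching into the fixed orange dominoes $M_O$ plus red--green dominoes that lie entirely within red--green components, and then observe that the choices on the components are independent. The paper's proof is a two-sentence sketch of exactly this idea; you have simply filled in the details it omits (in particular the verification that each $M_C$ is a \emph{maximum} matching of $C$ and that the reverse assignment lands on a maximum matching of $T$), so there is no substantive difference in strategy.
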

\begin{proof}
  The dominoes are fixed on the set of orange vertices, and cannot
  connect two distinct red-green components by proposition
  \ref{precise_matching}. Therefore, one can choose a maximum matching
  independently on every red-green component.
\end{proof}

Let us denote by $r(T), o(T)$ and $g(T)$ the number of \red, \orange
and \green vertices in the coloring of $T$. Let us call
\textbf{dimension} of a tree $T$ the quantity
\begin{equation}
  \label{defdim}
  \dim(T) = r(T) - g(T).
\end{equation}

\begin{remark}
  \label{rem_kernel}
  The dimension of $T$ is also the dimension of the kernel of the
  adjacency matrix of $T$, see \cite{coulomb_bauer}.
\end{remark}

\begin{lemma}
  \label{dim_is_not_covered}
  The dimension of $T$ is the number of vertices not covered by
  dominoes in any maximum matching.
\end{lemma}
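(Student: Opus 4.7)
The plan is to derive this directly from Proposition \ref{precise_matching} by a simple accounting argument, without needing any new combinatorial input.

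First I would fix a maximum matching $M$ of $T$ and partition the vertices according to their color, counting how many of each color are covered by $M$. By Proposition \ref{precise_matching}, the orange vertices are paired among themselves by dominoes of $M$, so all $o(T)$ orange vertices are covered. Again by that proposition, every green vertex is covered by $M$, and it is matched with a red neighbor; so all $g(T)$ green vertices are covered, and the dominoes realizing these pairings cover exactly $g(T)$ red vertices as well (one for each green vertex, with no collisions since matching dominoes are disjoint).

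The third description of the coloring in the previous subsection forbids red--red edges (a red vertex has only green neighbors) and red--orange edges, so a red vertex that is covered by $M$ must be covered by a green--red domino; no further red vertices can thus be covered beyond the $g(T)$ already accounted for. Consequently the number of vertices not covered by any domino of $M$ equals
\begin{equation*}
r(T) - g(T) = \dim(T),
\end{equation*}
which is independent of the choice of maximum matching $M$.

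There is no real obstacle: the only point to watch is to invoke Proposition \ref{precise_matching} to ensure orange and green vertices are all covered in every maximum matching (so that the uncovered vertices are precisely the $r(T) - g(T)$ red vertices left over), and to use the third description of the coloring to exclude red vertices from being matched to anything other than a green vertex.
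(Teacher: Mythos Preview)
Your proof is correct and takes essentially the same approach as the paper: both invoke Proposition~\ref{precise_matching} to count the dominoes in a maximum matching as $g(T)+o(T)/2$, hence $2g(T)+o(T)$ covered vertices and $r(T)-g(T)$ uncovered ones. Your version is slightly more explicit in using the third description of the coloring to rule out red--red and red--orange dominoes, but this is the same underlying argument.
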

\begin{proof}
  By the precise description of maximum matchings given in proposition
  \ref{precise_matching}, the number of dominoes in a maximum matching
  is $g(T)+o(T)/2$. The number of covered vertices is therefore $2
  g(T)+o(T)$. The statement follows.
\end{proof}

\begin{lemma}
  \label{about_dimension}
  The dimension of $T$ is the sum of the dimensions of the red-green
  components of $T$. Every red-green component has dimension at least
  $1$.
\end{lemma}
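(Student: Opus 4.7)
The plan is to split the statement into the additivity and the positivity parts, and to handle each by using the third (equivalent) description of the coloring together with the stability properties already recorded in the paper.

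For additivity, I would first note that by the third description every \red vertex has only \green neighbors, so no edge incident to an \orange vertex is a red-green edge. Hence each red-green component consists entirely of red and green vertices, and every red or green vertex of $T$ belongs to exactly one such component, while no orange vertex belongs to any. Combined with the stated stability of the coloring under taking the induced forest on the union of red and green vertices (and hence under passing to a single red-green component), this ensures that $r(C)$ and $g(C)$ are unambiguous, i.e., coincide whether computed inside $T$ or intrinsically in $C$. Summing over components then gives $r(T) = \sum_C r(C)$ and $g(T) = \sum_C g(C)$, whence $\dim(T) = \sum_C \dim(C)$.

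For the lower bound, let $C$ be a red-green component with $r = r(C)$ and $g = g(C)$. Being a tree, $C$ has $r + g - 1$ edges. Since $C$ is bipartite with every edge joining a red and a green vertex, the sum of the degrees of its green vertices is exactly $r + g - 1$. The third description guarantees that every green vertex has at least two red neighbors (all of them in $C$, since red vertices only border green ones), so this sum is at least $2g$. Therefore $2g \le r + g - 1$, i.e., $\dim(C) = r - g \ge 1$. The degenerate case of a single isolated red vertex (which occurs when $T$ itself has one vertex) yields $\dim(C) = 1$ directly.

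The argument is fundamentally a degree count and I do not expect any real obstacle. The only point worth double-checking is the edge case of a component reduced to a single red vertex, but the bound accommodates it without modification.
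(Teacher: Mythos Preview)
Your proposal is correct and follows essentially the same approach as the paper: for additivity you use (with more detail) the fact that the definition of $\dim$ ignores orange vertices, and for the lower bound your degree count is exactly the paper's Euler-characteristic computation $1 = r + g - e$ combined with $e \ge 2g$.
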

\begin{proof}
  The formula \eqref{defdim} for the dimension does not depend on
  \orange vertices, and is clearly additive on red-green components.

  Let $T$ be a red-green tree. The Euler characteristic is
  given by
  \begin{equation*}
    \chi(T) = 1 = r(T) + g(T) - e(T),
  \end{equation*}
  where $e(T)$ is the number of edges of $T$. On the other hand,
  \begin{equation*}
    e(T) \geq 2 g(T),
  \end{equation*}
  because every green vertex has at least two red neighbors.
\end{proof}

\begin{lemma}
  \label{remove_one_red_vertex}
  Let $T$ be a red-green tree. Let $F$ be the forest obtained by
  removing one red vertex of $T$. Then $\dim(F) = \dim(T) -1$.
\end{lemma}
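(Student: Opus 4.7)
The plan is to reformulate $\dim$ in terms of matching numbers and then argue that removing a red vertex leaves the matching number unchanged.

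First, I would use Lemma \ref{dim_is_not_covered} to rewrite the dimension as $\dim(T) = |V(T)| - 2\mu(T)$, where $\mu(T)$ denotes the cardinality of a maximum matching of $T$. The same identity holds for the forest $F$, since both sides of the equation are additive over connected components (the dimension by convention, and the matching number because a maximum matching of a forest is the disjoint union of maximum matchings of its components; Lemma \ref{dim_is_not_covered} applied componentwise then gives $\dim(F) = |V(F)| - 2\mu(F)$).

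Next, I would use the matching description of the coloring recalled just before Proposition \ref{precise_matching}: a vertex is red precisely when it is \emph{absent} from at least one maximum matching. Applied to the removed red vertex $v$, this yields a maximum matching $M$ of $T$ none of whose dominoes touches $v$. Then $M$ is still a matching of $F$, so $\mu(F) \geq \mu(T)$; the reverse inequality $\mu(F) \leq \mu(T)$ is automatic because any matching of $F$ is a matching of $T$. Hence $\mu(F) = \mu(T)$.

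Combining the two steps, and using $|V(F)| = |V(T)| - 1$, I obtain
\begin{equation*}
  \dim(F) = |V(F)| - 2\mu(F) = |V(T)| - 1 - 2\mu(T) = \dim(T) - 1,
\end{equation*}
which is the desired equality. I do not anticipate a serious obstacle; the only small point to be careful about is that the key identity $\dim = |V| - 2\mu$ is a direct consequence of the preceding lemma, and that the hypothesis that $T$ is a red-green tree is used only through the existence of the red vertex $v$ (the bipartite structure plays no further role in this particular statement).
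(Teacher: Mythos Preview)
Your proof is correct and takes a genuinely different route from the paper. The paper argues by describing explicitly how the canonical coloring of $F$ is obtained from the restriction of that of $T$: one repeatedly finds green vertices left with a unique red neighbor and turns each such pair into an orange domino (an ``avalanche''); since each such replacement leaves $r-g$ unchanged and the initial removal of $v$ decreases $r$ by one, the dimension drops by exactly one. Your argument bypasses the coloring entirely and works at the level of matching numbers via $\dim = |V| - 2\mu$ together with the characterization of red vertices as those missed by some maximum matching.

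Your approach is shorter and more robust (it even shows the red-green hypothesis is inessential, as you note). The paper's approach, on the other hand, yields extra structural information: the explicit ``avalanche'' description of the new coloring is reused later (for instance in Lemma~\ref{smooth_generic} and in the smoothness arguments of Section~3), where one needs to know not just the new dimension but how the red-green components of $F$ arise from those of $T$.
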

\begin{proof}
  Removing the vertex makes a big difference in the colorings of $F$
  and $T$. The coloring of $F$ can be obtained from the restriction
  of the coloring of $T$ by some avalanche of orange vertices, as
  follows.
  
  At start, the restriction of the coloring of $T$ gives a bad
  coloring of $F$, where some green vertices $v$ may have exactly one
  red neighbor. If not, then the coloring is the canonical
  one. Otherwise, one can turn every such vertex $v$ and its unique
  red neighbor into an orange domino. Doing that may create a certain
  number of green vertices with exactly one red neighbor. For each of
  them, replace it and its unique red neighbor by a
  domino. Repeat this as long as there is some green vertex with
  exactly one red neighbor. This must stop at some point, because we
  work in a finite union of trees. At the end of this avalanche of
  orange dominoes, one has obtained a canonical coloring of $F$.

  This construction implies that the dimension of $F$ is the
  dimension of $T$ minus $1$, because it only involves turning pairs
  (green vertex, red vertex) into orange dominoes.
\end{proof}



\begin{lemma}
  \label{decoupe_dim}
  Let $T$ be a red-green tree and $u-v$ be any edge of $T$. Let $F$ be
  the forest induced from $T$ by removing the vertices $u$ and
  $v$. Then the dimension of $T$ is the sum of the dimensions of the
  trees in $F$.
\end{lemma}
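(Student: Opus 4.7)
Since $T$ is a red-green tree, every edge of $T$ has one red end and one green end, so the given edge $u$--$v$ has (up to swapping the labels) $u$ red and $v$ green. The plan is to remove the two vertices in two stages, handling the removal of $u$ via Lemma~\ref{remove_one_red_vertex} and the removal of $v$ by a direct verification that the canonical coloring is preserved.

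The first stage is to remove $v$. In a red-green tree, no two green vertices are adjacent (every edge of the single red-green component uses one red and one green end), so the neighbourhood of any green vertex other than $v$ is unchanged when $v$ disappears; similarly no red vertex gains any non-green neighbour. Consequently, the three characterizing conditions of the canonical coloring continue to hold for the restriction of the coloring of $T$ to $T\setminus v$, and by uniqueness this restriction is the canonical coloring of the forest $T\setminus v$. Each connected component of $T\setminus v$ then contains only red and green vertices with all edges red-green, hence is itself a red-green tree. A direct count yields
\[
\dim(T\setminus v) \;=\; r(T) - \bigl(g(T)-1\bigr) \;=\; \dim(T) + 1.
\]

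The second stage is to remove $u$ from the unique component $C$ of $T\setminus v$ that contains it. That component is a red-green tree and $u$ is one of its red vertices, so Lemma~\ref{remove_one_red_vertex} gives $\dim(C\setminus u) = \dim(C)-1$. Because dimensions add over connected components of a forest and only $C$ is modified, we conclude
\[
\dim(F) \;=\; \dim(T\setminus v) - 1 \;=\; \dim(T).
\]

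The main technical point is the first stage: justifying that deleting a green vertex from a red-green tree yields a forest whose inherited coloring is still canonical on each component. This relies essentially on the red-green hypothesis (green vertices have only red neighbours, so no green-green edges exist); once this is in place, the arithmetic of colour counts together with Lemma~\ref{remove_one_red_vertex} finishes the argument with no further input.
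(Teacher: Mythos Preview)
Your proof is correct and rests on the same two ingredients as the paper's: stability of the canonical coloring under removal of a green vertex, and Lemma~\ref{remove_one_red_vertex} for the red vertex. The organization differs slightly. The paper (with $u$ green, $v$ red) removes both vertices at once, observes that the pieces $S_i$ attached to the green vertex keep their coloring by restriction, reattaches $v$ to each piece $T_j$ on the other side to form $\widehat{T}_j$, and then invokes Lemma~\ref{remove_one_red_vertex} once per $\widehat{T}_j$. You instead remove the green vertex globally first (so the whole forest $T\setminus v$ inherits its coloring in one stroke) and then apply Lemma~\ref{remove_one_red_vertex} a single time to the component containing the red vertex. Your route is a bit more streamlined; the paper's decomposition into $S_i$ and $T_j$ makes the individual pieces of $F$ more visible. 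Note also that the paper already records, immediately after the third characterization, that removing a green vertex preserves the coloring, so your first stage could simply cite that fact rather than rederiving it from the absence of green--green edges.
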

\begin{proof}
  Assume that $u$ is green and $v$ is red. Let $S_1,\dots,S_k$ be the
  trees in $F$ attached to $u$ and let $T_1,\dots,T_\ell$ be the trees
  in $F$ attached to $v$.

  Then the coloring of every $S_i$ is just obtained by restriction,
  because it still satisfies the third description of the canonical
  coloring.

  On the other hand, let us denote by $\widehat{T}_j$ the tree obtained
  from $T_j$ by adding back the red vertex $v$. Then the coloring of
  every $\widehat{T}_j$ is just obtained by restriction, because it
  still satisfies the third description of the canonical coloring.

  By the definition \eqref{defdim} of the dimension, one therefore finds that
  \begin{equation*}
    \dim(T) = \sum_i \dim(S_i) + \sum_j (\dim(\widehat{T}_j)-1).
  \end{equation*}
  By lemma \ref{remove_one_red_vertex}, this is equal to the expected result.
\end{proof}

\smallskip

\begin{lemma}
  Let $T$ be a tree. Let $u-v$ be a red-green edge of $T$. There
  exists a maximum matching of $T$ containing $u-v$.
\end{lemma}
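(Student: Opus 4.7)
The plan is to reduce to the case where $T$ itself is a red-green tree, and then compare maximum matching sizes via the dimension formula.

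Since $u$ and $v$ have different colors, the edge $u-v$ lies entirely inside one red-green component $C$ of $T$. The immediately preceding lemma identifies maximum matchings of $T$ with products of maximum matchings over the red-green components (together with the forced orange part coming from Proposition \ref{precise_matching}). Hence any maximum matching of $C$ containing $u-v$ extends to a maximum matching of $T$ containing $u-v$, and I may assume without loss of generality that $T$ is red-green, with $u$ green and $v$ red.

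Now write $F = T \setminus \{u,v\}$ and let $F_1, \dots, F_m$ be its tree components. I would show that a maximum matching of $F$ has size exactly $|M(T)| - 1$, where $|M(\cdot)|$ denotes the size of a maximum matching. Lemma \ref{dim_is_not_covered} applied to $T$ gives $|M(T)| = (|V(T)| - \dim(T))/2$, and applied to each component $F_i$ gives $|M(F_i)| = (|V(F_i)| - \dim(F_i))/2$. Summing over $i$ and using Lemma \ref{decoupe_dim}, which says $\dim(T) = \sum_i \dim(F_i)$ for a red-green tree split along an edge, one obtains $|M(F)| = \sum_i |M(F_i)| = (|V(T)| - 2 - \dim(T))/2 = |M(T)| - 1$. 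Therefore, adjoining the edge $u-v$ to any maximum matching of $F$ produces a matching of $T$ of size $|M(T)|$, which is a maximum matching of $T$ containing $u-v$.

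The main point is that the red-green hypothesis on the edge is precisely what makes Lemma \ref{decoupe_dim} applicable; once the dimension invariance $\dim(F) = \dim(T)$ is granted, the matching-size arithmetic works out on the nose, and no explicit augmenting-path argument is required.
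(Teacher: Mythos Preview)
Your proof is correct and follows essentially the same route as the paper: reduce to a red-green component, remove the edge $u-v$, and compare matching sizes via Lemma~\ref{dim_is_not_covered} and Lemma~\ref{decoupe_dim}. The paper phrases the count as ``number of vertices not covered on $F$ equals $\dim(T)$'' rather than computing $|M(F)|$ directly, but this is the same arithmetic.
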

\begin{proof}
  One can assume that $T$ is a red-green component, as maximum
  matchings of different red-green components are independent.

  One can take maximum matchings of the connected components of the
  forest $F$ induced from $T$ by removing $u$ and $v$. From lemma
  \ref{decoupe_dim} and lemma \ref{dim_is_not_covered}, the number of
  vertices not covered on $F$ is the dimension of $T$. Therefore,
  adding the domino $u-v$ gives a maximum matching of $T$.
\end{proof}

\begin{lemma}
  \label{presque_pavage}
  Let $T$ be a red-green tree and let $v$ be a leaf of $T$. There
  exists a maximum matching of $T$ where the vertices which are not
  covered are leaves. Moreover, unless $T$ is reduced to the single
  vertex $v$, one can find such a matching where $v$ is in a domino.
\end{lemma}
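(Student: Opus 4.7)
The plan is to prove both parts of the lemma simultaneously by induction on $|T|$. For the base case $T=\{v\}$ (a single red vertex), the empty matching is the unique maximum matching and leaves $v$, which is a leaf, uncovered; the ``moreover'' clause is then vacuous.

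For the inductive step with $|T|\geq 2$, the unique neighbor $u$ of $v$ is green, and the third description of the coloring forces $u$ to have red neighbors $v, w_1, \dots, w_k$ with $k\geq 1$. Let $S_j$ be the connected component of $T\setminus\{u,v\}$ containing $w_j$. By the argument used in the proof of Lemma \ref{decoupe_dim}, the coloring of each $S_j$ is inherited by restriction from $T$, so $S_j$ contains no orange vertex and all its edges are red-green; hence each $S_j$ is itself a red-green tree (possibly reduced to the single vertex $w_j$). For each $j$ I aim to produce a maximum matching $M_j$ of $S_j$ whose uncovered vertices are leaves of $S_j$ and such that $w_j$ lies in a domino whenever $|S_j|\geq 2$. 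There are three cases: if $|S_j|=1$, take $M_j=\emptyset$, so $w_j$ is uncovered but has degree $1$ in $T$; if $|S_j|\geq 2$ and $w_j$ is a leaf of $S_j$, I apply the ``moreover'' clause of the induction hypothesis to $S_j$ with distinguished leaf $w_j$; and if $w_j$ is not a leaf of $S_j$, I apply the first part of the induction hypothesis to $S_j$, and then $w_j$ is automatically covered because it is not a leaf.

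I then set $M := \{uv\}\cup\bigcup_j M_j$. This is a matching of cardinality $1+\sum_j g(S_j) = g(T)$, which is the maximum matching size of the red-green tree $T$, so $M$ is maximum; $v$ sits in the domino $uv$; and every uncovered vertex of $M$ is either a leaf of some $S_j$ different from $w_j$, whose neighbors in $T$ and in $S_j$ coincide (hence a leaf of $T$), or some $w_j$ with $|S_j|=1$, which is again a leaf of $T$. The delicate point, and the main obstacle, is the second case above: when $w_j$ is a leaf of $S_j$ but has degree $2$ in $T$, an arbitrary maximum matching of $S_j$ with uncovered set consisting of leaves of $S_j$ could leave $w_j$ uncovered, which would produce a non-leaf uncovered vertex of $T$. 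This is precisely why the ``moreover'' clause must be strengthened into the induction hypothesis and proved in tandem with the first part, rather than as a consequence of it.
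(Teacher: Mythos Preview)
Your proof is correct and follows essentially the same inductive scheme as the paper: remove the leaf $v$ and its green neighbor $u$, apply induction to the red-green components of $T\setminus\{u,v\}$, and reattach the domino $u\!-\!v$. The paper's proof is terser; in particular it simply asserts that ``one can choose this matching such that the vertices which are not covered are in fact leaves of $T$'', whereas you spell out exactly how this is arranged---by invoking the \emph{moreover} clause of the induction hypothesis on each component $S_j$ with distinguished leaf $w_j$ whenever $w_j$ happens to be a leaf of $S_j$ but not of $T$---and you correctly identify this as the reason the two assertions must be proved together. Your count $|M|=1+\sum_j g(S_j)=g(T)$ is an equivalent (and slightly more direct) replacement for the paper's appeal to Lemma~\ref{decoupe_dim} and Lemma~\ref{dim_is_not_covered} to certify maximality.
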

\begin{proof}
  By induction on the size of the tree $T$. This is true for the tree
  with $1$ vertex.

  Let us call $u$ the \green neighbor of the red leaf $v$. The induced
  forest $F$ defined as $T\setminus \{u,v\}$ is made of red-green
  trees, whose sum of dimensions is the dimension of $T$ by lemma
  \ref{decoupe_dim}. By induction, one can find a maximum matching of
  $F$ such that vertices which are not covered are leaves of
  $F$. Moreover, one can choose this matching such that the vertices
  which are not covered are in fact leaves of $T$.

  One then obtains by adding the domino $u-v$ a
  maximum matching of $T$ with all the required properties.
\end{proof}

\begin{lemma}
  \label{rouge_dans_matching}
  Let $T$ be a tree and let $v$ be a red vertex of $T$. There
  exists a maximum matching of $T$ not containing $v$.
\end{lemma}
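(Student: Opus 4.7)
The plan is to reduce the problem to a single red-green component, then use a dimension count to exhibit the desired matching as a maximum matching of a forest obtained by deleting $v$.

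First I would invoke the lemma (already stated) that maximum matchings of $T$ are in bijection with tuples of maximum matchings of the red-green components of $T$, together with the fixed perfect matching on the orange induced forest given by Proposition~\ref{precise_matching}. Since $v$ is red, it lies in some red-green component $T'$, and any maximum matching of the other red-green components combined with a maximum matching of $T'$ not covering $v$ (and with the fixed orange dominoes) will give a maximum matching of $T$ not covering $v$. So without loss of generality we may assume $T$ is a red-green tree.

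Next I would count uncovered vertices. By Lemma~\ref{dim_is_not_covered}, a maximum matching of a tree (or, by additivity, of a forest) has exactly $\dim(T)$ uncovered vertices, hence $(|V(T)|-\dim(T))/2$ dominoes. Let $F$ be the forest obtained from $T$ by removing the red vertex $v$; by Lemma~\ref{remove_one_red_vertex} we have $\dim(F)=\dim(T)-1$. A maximum matching $M$ of $F$ therefore contains
\begin{equation*}
  \frac{|V(F)|-\dim(F)}{2}=\frac{(|V(T)|-1)-(\dim(T)-1)}{2}=\frac{|V(T)|-\dim(T)}{2}
\end{equation*}
dominoes, which is exactly the number of dominoes in a maximum matching of $T$.

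Finally, since $M$ is also a matching of $T$ (it just avoids $v$), and it has the maximum possible number of dominoes, it is a maximum matching of $T$ not containing $v$. The only mild subtlety — and the one step I would double-check — is that Lemma~\ref{remove_one_red_vertex} applies to a red-green tree and delivers a forest whose components are no longer necessarily red-green; but since the dimension is additive over components and Lemma~\ref{dim_is_not_covered} holds for each component separately, the arithmetic goes through without further work.
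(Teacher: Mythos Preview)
Your argument is correct, but it is substantially more elaborate than what the paper does. The paper's proof is a single line: it simply invokes the second characterization of the canonical coloring (the one in terms of maximum matchings), which \emph{defines} a red vertex to be one that is absent from some maximum matching. So the lemma is essentially a restatement of that characterization, and the proof is immediate.

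Your route---reducing to a red-green component, deleting $v$, using Lemma~\ref{remove_one_red_vertex} to track the drop in dimension, and then counting dominoes via Lemma~\ref{dim_is_not_covered}---is a genuinely different and more constructive argument: it actually exhibits the desired matching as any maximum matching of the forest $T\setminus\{v\}$. This is a nice self-contained verification that does not appeal back to the matching-based description of the coloring, and it shows concretely where the extra uncovered vertex can be placed. The cost is that you are reproving something that, in the paper's logical flow, is already built into the equivalence of the three descriptions of the coloring (established in \cite{coulomb_bauer}). Your ``mild subtlety'' about applying Lemma~\ref{dim_is_not_covered} componentwise on the forest $F$ is handled correctly.
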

\begin{proof}
  Otherwise, one would get a contradiction with the characterization
  of the canonical coloring.
\end{proof}

\begin{lemma}
  \label{dimension1}
  The trees obtained by removing a leaf in an \orange tree are exactly
  the trees of dimension $1$. They have exactly one red-green
  component.
\end{lemma}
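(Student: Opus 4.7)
The plan is to prove the two directions of the equivalence, then read off the statement about components from lemma \ref{about_dimension}. For the forward direction, I would let $T_0$ be an orange tree and $v_0$ a leaf of $T_0$, and set $T = T_0 \setminus \{v_0\}$. By the earlier lemma characterizing orange trees, $T_0$ admits a perfect matching $M_0$, and since $v_0$ is matched to its unique neighbour $u_0$ by that matching, removing the domino $\{v_0, u_0\}$ yields a matching $M$ of $T$ covering all vertices except $u_0$. As $|V(T_0)|$ is even, $|V(T)|$ is odd, so any matching of $T$ leaves at least one vertex uncovered; hence $M$ is maximum, and lemma \ref{dim_is_not_covered} gives $\dim(T) = 1$.

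For the converse, suppose $\dim(T) = 1$. I would pick any maximum matching $M$ of $T$; by lemma \ref{dim_is_not_covered} it leaves exactly one vertex $w$ uncovered, and proposition \ref{precise_matching} forces $w$ to be \red, since \orange and \green vertices lie in every maximum matching. Attaching a new vertex $v_0$ to $w$ by a single edge gives a tree $T_0$ in which $v_0$ is a leaf, and $M \cup \{\{v_0, w\}\}$ is a perfect matching of $T_0$; the earlier lemma then makes $T_0$ orange, and clearly $T = T_0 \setminus \{v_0\}$.

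The statement about components is then immediate from lemma \ref{about_dimension}, which expresses $\dim(T)$ as a sum over red-green components with each summand at least $1$, so $\dim(T) = 1$ forces a single component. The main subtlety is really confined to the converse, where one has to invoke proposition \ref{precise_matching} to identify the uncovered vertex of a maximum matching as red; everything else is a parity observation on $|V(T_0)|$ combined with the additivity of dimension over red-green components.
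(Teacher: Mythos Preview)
Your proof is correct, and in the converse direction it is actually more economical than the paper's. The paper first invokes lemma \ref{about_dimension} to see that a tree of dimension $1$ has a single red-green component, then applies lemma \ref{presque_pavage} to that component to produce a maximum matching whose unique uncovered vertex is a \emph{leaf} $w$; only then does it attach a new vertex at $w$. You bypass lemma \ref{presque_pavage} entirely: you take \emph{any} maximum matching, leaving one vertex $w$ uncovered, and attach a new pendant vertex $v_0$ there. Since $v_0$ is automatically a leaf of the enlarged tree $T_0$, and $M \cup \{\{v_0,w\}\}$ is a perfect matching, $T_0$ is orange and $T = T_0 \setminus \{v_0\}$ as required. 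No information about $w$ being a leaf of $T$ is needed.

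One small remark: your appeal to proposition \ref{precise_matching} to conclude that $w$ is \red is correct but not actually used anywhere in the rest of your argument. The construction of $T_0$ and the verification that it is orange go through regardless of the colour of $w$, so you can drop that step. The forward direction and the statement about red-green components are handled exactly as in the paper.
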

\begin{proof}
  Let us pick an orange tree $T$ and a leaf $v$ with adjacent vertex
  $w$. Removing the leaf $v$ gives a tree $T\setminus \{v\}$ with a matching
  covering all vertices but $w$. This is clearly a maximum matching,
  hence $T\setminus \{v\}$ has dimension $1$ by lemma \ref{dim_is_not_covered}.

  Conversely, consider a tree $T'$ of dimension $1$. It has exactly
  one red-green component, as every red-green component contributes at
  least $1$ to the dimension by lemma \ref{about_dimension}. This
  red-green component has dimension $1$. By lemma
  \ref{presque_pavage}, one can find a maximum matching of $T'$
  missing only one leaf $w$. Adding a vertex $v$ attached to $w$ gives
  a tree with a perfect matching, \textit{i.e.} an orange tree.
\end{proof}

We will call the trees of dimension $1$ \textbf{unimodal trees}.

\begin{remark}
  The classical Dynkin diagrams are simple examples of trees:
  \begin{itemize}
  \item Type $\TA_n$: \orange for even $n$, unimodal for odd $n$,
  \item Type $\TD_n$: unimodal for odd $n$,
  \item Type $\TE_n$: \orange for $n=6, 8$, unimodal for $n=7$.
  \end{itemize}
  The type $\TD_n$ with $n$ even has dimension $2$.
\end{remark}

\section{Affine algebraic varieties}

Using the coloring of the previous section, one can define several
affine algebraic varieties attached to a tree $T$ and some auxiliary
choices. These varieties are closely related to cluster algebras.

First, let us consider the system of equations
\begin{equation}
  \label{exchange_rel}
  x_i x'_i = 1 + \alpha_i \prod_{i-j} x_j
\end{equation}
for all vertices $i$ of $T$, where the product runs over vertices $j$
adjacent to $i$. Here $x_i$ and $x'_i$ are called \textbf{cluster
  variables}, and $\alpha_i$ are called \textbf{coefficient
  variables}.

By a special case of \cite[Corollary 1.17]{cluster3}, this system is a
presentation of the cluster algebra associated with the quiver given
by a bipartite orientation of $T$, with one frozen vertex attached to
every vertex of $T$ (in such a way that all vertices of $T$ remain
sources or sinks). In the context of cluster algebras, the equations
\eqref{exchange_rel} are called exchange relations.

We will be interested here in considering the $\alpha_i$ as
parameters, and letting them either vary in some well-chosen families
or take fixed generic values (and even a mix of these two
possibilities), so that the resulting space is smooth.

\subsection{Jumping around a groupoid}

Let us denote by $X_T(\alpha)$ the algebraic scheme defined by fixing
some invertible values for all coefficient variables $\alpha_i$.

Recall the following lemma (\cite[Lemma 2.2]{nombre_points}).
\begin{lemma}
  \label{jumpin}
  Let $u-v$ be an edge of $T$. Let $\beta$ be defined by
  $ \beta_w = \alpha_w/\alpha_u$ if $w$ is a neighbor of $v$ (in
  particular $ \beta_u = 1$) and $\beta_w = \alpha_w$ otherwise. Then
  $X_T(\alpha)$ and $X_T(\beta)$ are isomorphic, by the change of
  variables $x_v=\alpha_u x_v$ and $x'_v=x'_v/\alpha_u$.
\end{lemma}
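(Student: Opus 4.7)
The plan is a direct verification: exhibit an explicit morphism between the two affine schemes and check case by case that the exchange relations transform correctly under the indicated change of coordinates. To set up notation cleanly, I would denote the cluster variables of $X_T(\beta)$ by $y_i, y'_i$ and keep $x_i, x'_i$ for those of $X_T(\alpha)$, so that the proposed map sends $y_v \mapsto \alpha_u x_v$, $y'_v \mapsto \alpha_u^{-1} x'_v$, and $y_i \mapsto x_i$, $y'_i \mapsto x'_i$ for every $i \ne v$. Since $\alpha_u$ is an invertible scalar, this is visibly an automorphism of the ambient affine space, with an obvious inverse of the same form.

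The verification then splits into three cases according to the vertex $i$ at which one examines the relation $y_i y'_i = 1 + \beta_i \prod_{i-j} y_j$. At $i = v$, the rescaling cancels in the product $y_v y'_v = x_v x'_v$, and on the right-hand side one has $\beta_v = \alpha_v$ (since $v$ is not a neighbor of itself) while none of the factors $y_j$ appearing there is rescaled. At a vertex $i = w$ adjacent to $v$, the product $\prod_{w-j} y_j$ picks up exactly one factor $\alpha_u$ from $y_v$, and this is precisely cancelled by the compensating $\alpha_u^{-1}$ sitting in $\beta_w = \alpha_w / \alpha_u$. At any remaining vertex $i$, neither $i$ itself nor any of its neighbors is $v$, so both coefficient and product are literally unchanged, and the equation carries over verbatim.

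There is essentially no obstacle: the lemma simply records that the rescaling of $x_v$ and the division of the coefficients $\alpha_w$ at neighbors of $v$ by $\alpha_u$ have been calibrated to cancel. The only point demanding care is to correctly identify which products $\prod_{i-j} y_j$ actually contain the rescaled variable $y_v$: namely those, and only those, indexed by vertices $i$ which are neighbors of $v$. Once this is isolated, the three cases above cover all vertices of $T$ and the verification is complete.
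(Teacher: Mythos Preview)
Your verification is correct and complete: the three cases you isolate are exactly the right ones, and the cancellation in each is as you describe. The paper itself does not prove this lemma but only recalls it from \cite[Lemma 2.2]{nombre_points}; your argument is precisely the direct substitution check one would give.
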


One may say that the coefficient $\alpha_u$ has \textbf{jumped} away
from $u$ over $v$ and its inverse has got spread over all other
neighbors of $v$. When $v$ has $u$ as only neighbor, the coefficient
$\alpha_u$ just disappears from the equations.

From now on, we will only admit the following kinds of jumps:
\begin{itemize}
\item a red vertex over one of its green neighbors,
\item a green vertex over one of its red neighbors,
\item an orange vertex over its matched orange neighbor.
\end{itemize}

\smallskip

Let us now define a groupoid $G_T$ with objects the schemes
$X_T(\alpha)$ indexed by invertible values of the parameters
$\alpha$, and isomorphisms $X_T(\alpha) \simeq X_T(\bar{\alpha})$ of
the shape
\begin{equation}
  \begin{cases}
  \bar{x}_i &\mapsto \lambda_i x_i,\\
  \bar{x}'_i &\mapsto x'_i/\lambda_i,
  \end{cases}
\end{equation}
where $\lambda_i$ are some invertible elements. The parameters are then related by
\begin{equation}
  {\alpha}_i = \bar{\alpha}_i \prod_{i - j}\lambda_j .
\end{equation}

Note that every jump corresponds to an isomorphism in the groupoid $G_T$.

\smallskip

\begin{proposition}
  \label{reduction_via_matching}
  For every maximum matching $M$ and given parameters $\alpha$, there
  exists unique parameters $\beta$ (given by monic Laurent monomials
  in $\alpha$) such that
  \begin{itemize}
  \item the function $\beta$ is $1$ except on the set of red vertices
    not covered by $M$.
  \item $X_T(\alpha)$ is isomorphic to $X_T(\beta)$ by a sequence of jumps.
  \end{itemize}

  Moreover,
  \begin{itemize}
  \item[(a)] the function $\beta$ only depends on the values of $\alpha$ on
    the red vertices of $T$,
  \item[(b)] the values of $\beta$ on a red-green component are Laurent
    monomials in the values of $\alpha$ on the same red-green
    component.
  \end{itemize}
\end{proposition}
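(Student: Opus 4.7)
Plan: I would prove existence and uniqueness together by induction on $|V(T)|$, then derive properties (a) and (b) by tracking the Laurent monomial structure. Working additively (passing to logarithms of all parameters), the groupoid transformations act as $a \mapsto a - A\mu$ where $A$ is the adjacency matrix of $T$; thus the claimed existence and uniqueness of $\beta$ amount to the splitting
\begin{equation*}
  \QQ^V = \operatorname{Im}(A) \oplus \QQ^U,
\end{equation*}
where $U$ is the set of uncovered red vertices. The two summands have matching dimensions: $|U| = \dim(T) = \dim \ker A$ by Lemma \ref{dim_is_not_covered} and Remark \ref{rem_kernel}, so it is enough to show $\ker A \cap \QQ^{V \setminus U} = 0$.

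For the inductive construction of $\beta$ by jumps, I would pick a leaf $v$ of $T$. Green vertices have valency at least two (third description of the coloring), so $v$ is red or orange, and its unique neighbor is forced by the color constraints. If $v$ is orange, its unique neighbor $w$ must be its matched orange partner (since orange never neighbors red, so the alternative of a green neighbor would leave $v$ isolated in the induced orange forest), and two successive orange-over-orange jumps (first $v$ over $w$, then $w$ over $v$, the latter having no side effects because $v$ is a leaf) eliminate both $\alpha_v$ and $\alpha_w$, reducing to $T \setminus \{v, w\}$. If $v$ is red and $\{v, g\} \in M$ with $g$ green, two jumps (red over green, then green over red) eliminate both $\alpha_v$ and $\alpha_g$, and one recurses on $T \setminus \{v\}$; here Lemma \ref{remove_one_red_vertex} is invoked to relate the canonical coloring of $T \setminus \{v\}$ to the restriction of the coloring of $T$, which may differ by an avalanche of new orange dominoes. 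If $v$ is red and uncovered, one recurses on $T \setminus \{v\}$ directly, letting $\alpha_v$ contribute unchanged to $\beta_v$.

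For uniqueness one shows by the same leaf argument that $\ker A \cap \QQ^{V \setminus U} = 0$: at any leaf $v_0$, the equation $(A\mu)_{v_0} = 0$ forces $\mu$ to vanish at $v_0$'s unique neighbor, and this propagates inward by induction on the tree minus the leaf. Property (a) is equivalent by symmetry of $A$ to $\ker A \subseteq \QQ^R$ (since $\operatorname{Im}(A) = (\ker A)^\perp$, the condition $\QQ^{O \cup G} \subseteq \operatorname{Im}(A)$ is dual to $\ker A \subseteq \QQ^R$), which one derives from analyzing the balance equations $(A\mu)_v = 0$ at orange and green vertices using the third characterization of the coloring. Property (b) refines (a) by the observation that any element of $\ker A$ decomposes as a sum of vectors each supported on a single red-green component, since within the red-green induced subgraph the components are, by definition, disconnected.

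The main technical obstacle is the red-leaf-covered case of the existence induction: when the red leaf is removed, the canonical coloring of the smaller tree may differ from the restriction via the avalanche phenomenon of Lemma \ref{remove_one_red_vertex}, so one must verify that the restricted matching $M$ remains a maximum matching of the smaller tree (which it does, since any larger matching would extend to $T$, contradicting maximality) and that the induction hypothesis applies under the new canonical coloring. This bookkeeping is the central technical content of the proof.
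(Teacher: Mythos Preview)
Your linear-algebraic reformulation—passing to logarithms and reducing everything to the splitting $\QQ^V = \operatorname{Im}(A) \oplus \QQ^U$—is clean and genuinely different from the paper's approach. The paper instead builds an explicit auxiliary acyclic digraph $\ag$ on the vertices (with an arrow $u \to w$ whenever $u-v$ is a domino of $M$ and $v-w$ is an adjacent edge) and clears all coefficients in one sweep by jumping along a linear extension of $\ag$. Your framework handles uniqueness and properties (a)--(b) more conceptually, once one knows $\ker A \subseteq \QQ^R$; the paper instead tracks where individual coefficients can land during the jump process.

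However, your inductive construction of the jump sequence has a real error in the covered-red-leaf case. You recurse on $T \setminus \{v\}$ and assert that the restricted matching is maximum there ``since any larger matching would extend to $T$, contradicting maximality''. This is false: removing a red vertex drops the dimension by $1$ (Lemma \ref{remove_one_red_vertex}), so a maximum matching of $T \setminus \{v\}$ has size $|M|$, not $|M|-1$; and a matching of $T\setminus\{v\}$ of size $|M|$, viewed in $T$, does not contradict maximality of $M$ at all. Thus $M \setminus \{v{-}g\}$ is \emph{not} a maximum matching of $T\setminus\{v\}$, and the induction hypothesis does not apply.

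The repair is to recurse on the forest $T \setminus \{v, g\}$ instead: removing the green vertex $g$ keeps the coloring by restriction, the forest has dimension $\dim(T)$, and $M\setminus\{v{-}g\}$ \emph{is} a maximum matching there with the same uncovered set $U$. One then needs a final cleanup jump of $g$ over $v$ (harmless since $v$ is a leaf) to reset $\alpha_g$, because jumps over vertices adjacent to $g$ during the recursion will have disturbed it. The same cleanup issue is present in your orange-leaf case: jumps over the neighbors $w_i$ of $w$ will pollute $\alpha_w$, and a final jump of $w$ over the leaf $v$ is needed at the end.
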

\begin{proof}
  Let us first prove the existence of such parameters $\beta$. The
  main idea is to iterate lemma \ref{jumpin} by jumping over dominoes of $M$.

  Let us define an auxiliary oriented graph $\ag$ as follows: the
  vertices of $\ag$ are the vertices of $T$, and there is an edge $u \to
  w$ in $\ag$ if $u-v$ is a domino in $M$ and $v-w$ is another edge in $T$.

  With this notation, if there are edges starting from $u$ in $\ag$, one
  can use lemma \ref{jumpin} (by jumping over $v$) to turn the
  coefficient $\beta_u$ into $1$ and replace the coefficients $\beta_w$
  by $\beta_w / \beta_u$, for all vertices at the end of an arrow $u \to w$.

  One can see that the graph $\ag$ has no oriented cycle, otherwise
  there would be a cycle in $T$ made of concatenated
  dominoes. Moreover, edges in the graph $\ag$ can only go
  from green to green, from orange to orange or green, or start from
  red.

  Then one can do these jumps starting from the sources in $\ag$ and then
  proceeding along any linear extension of the partial order defined
  by $\ag$.

  At the end of this process, all vertices covered by dominoes have
  coefficient $1$. There only remains coefficients on the red vertices
  not covered by the maximum matching $M$. This proves the existence of
  the required parameters $\beta$.  

  The fact that the coefficients $\beta_j$ are products of
  coefficients $\alpha_i$ and their inverses is immediate from the
  definition of jumping. 

  Let us now prove uniqueness. Assume there are two such sets of
  parameters $\beta$ and $\bar{\beta}$. Let $x$ and $\bar{x}$ be the
  coordinates on the isomorphic $X_T(\beta)$ and $X_T(\bar{\beta})$.
  
  Let us first prove that any isomorphism in the groupoid $G_T$ from
  $X_T(\beta)$ to $X_T(\bar{\beta})$ maps $\bar{x}_j$ to $x_j$ for
  every green vertex $j$. This is done by induction using the
  auxiliary graph $\ag$, starting with the green vertices that
  do not have any outgoing edge in $\ag$. For every green
  vertex, one just has to consider the equation \eqref{exchange_rel}
  for the unique red vertex that is in the same domino in $M$.

  Using then the equation \eqref{exchange_rel} for all red vertices
  $i$ not covered by $M$, one obtains that $\beta_i =
  \bar{\beta}_i$. This proves uniqueness.

  For the statement $(a)$, consider what happens to the
  coefficient attached to an orange or a green vertex $u$. By
  proposition \ref{precise_matching}, the domino containing $u$ must
  be orange or green-red. The coefficient can therefore only jump to
  green or orange vertices. So they must disappear at some point,
  because only red vertices bear coefficients at the end of the
  process.

  Similarly for the statement $(b)$, consider the coefficient attached to a
  red vertex $u$. Again by proposition \ref{precise_matching}, the
  domino containing $u$ must be red-green. The coefficient can only
  jump to red vertices in the same red-green component, or to orange
  and green vertices. As the coefficients on orange or green vertices
  will disappear by the previous point, coefficients can only stay
  within a given red-green component. 
\end{proof}

Recall that the dimension $\dim(T)$ of $T$ is (by lemma
\ref{dim_is_not_covered}) the number of red vertices that are not
covered in any maximum matching of $T$. Proposition
\ref{reduction_via_matching} justifies this terminology, as this gives
the number of independent parameters for the varieties $X_T(\alpha)$
(inside the groupoid $G_T$).

\begin{remark}
  In the particular case when the tree $T$ is \orange, all
  $X_T(\alpha)$ are isomorphic.
\end{remark}


\medskip

By proposition \ref{reduction_via_matching}, in order to study all
isomorphism classes of such varieties, one can restrict oneself to
attach parameters only to red vertices not covered by a maximum
matching $M$.

For a maximum matching $M$ of $T$, let us define a scheme
$X^M_T(\alpha)$ by the set of equations \eqref{exchange_rel}, where
$\alpha_i$ are invertible fixed parameters, equal to $1$ if $i$ is
covered by $M$.

Given two matchings $M$ and $M'$, one can always find by Proposition
\ref{reduction_via_matching} a sequence of jumps that provides an
isomorphism in $G_T$ between $X^M_T(\alpha)$ and $X^{M'}_T(\beta)$,
where the parameters $\beta$ are uniquely determined Laurent monomials
in $\alpha$.

Let us consider now the automorphism group $\aut(X^M_T(\alpha))$ of
the object $X^M_T(\alpha)$ in the groupoid $G_T$.

\begin{proposition}
  \label{description_aut}
  The automorphism group $\aut(X^M_T(\alpha))$ is an algebraic torus
  isomorphic to $\gm^{\dim(T)}$. If $(\lambda_i)_{i\in T}$ is an element of
  $\aut(X^M_T(\alpha))$, then $\lambda_i = 1$ on green and orange
  vertices of $T$.
\end{proposition}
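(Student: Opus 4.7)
The plan starts by rewriting what an automorphism of $X^M_T(\alpha)$ in the groupoid $G_T$ is: a tuple $(\lambda_i) \in \gm^T$ preserving the parameters, which via the relation $\alpha_i = \bar\alpha_i \prod_{i-j}\lambda_j$ reduces to
\[
  \prod_{i-j}\lambda_j = 1 \qquad \text{for every vertex } i \in T.
\]
In particular, $\aut(X^M_T(\alpha))$ is the multiplicative kernel of the adjacency matrix of $T$ and depends neither on $M$ nor on $\alpha$.

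For the second assertion I would show $\lambda_u = 1$ for every green or orange vertex $u$ by induction along the acyclic auxiliary graph $\ag$ from the proof of Proposition \ref{reduction_via_matching}. By Proposition \ref{precise_matching} each such $u$ is covered by $M$; if $u-v$ denotes its domino, the equation $\prod_{v-j}\lambda_j = 1$ at $v$ isolates
\[
  \lambda_u \;=\; \prod_{u \to w \text{ in }\ag} \lambda_w^{-1}.
\]
At a sink of $\ag$ (the case where $v$ has no neighbor other than $u$) this gives $\lambda_u = 1$ from the empty product. Propagating upward, the essential input is the color constraint on arrows of $\ag$ recalled in that proof: arrows out of a green vertex land on green vertices, and arrows out of an orange vertex land on orange or green ones. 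The formula therefore expresses every orange or green $\lambda_u$ as a product of already-treated orange or green $\lambda_w^{-1}$, which are $1$ by the induction hypothesis, so $\lambda_u = 1$ follows throughout.

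Once this vanishing is known, the constraint at a red vertex (all of whose neighbors are green) and at an orange vertex (neighbors orange or green) is automatic, and the only surviving relations are, at each green vertex $v$,
\[
  \prod_{r \sim v,\ r \text{ red}} \lambda_r = 1.
\]
These equations describe the multiplicative kernel of the biadjacency matrix $B$ of the red-green bipartite subforest, viewed as a morphism of tori $\gm^R \to \gm^G$. To finish, I would show this kernel is a torus of dimension $\dim(T)$. The heart of the argument, which I expect to be the main technical point, is the total unimodularity of $B$: since every tree has at most one perfect matching, any square submatrix of $B$ is the biadjacency matrix of a bipartite subforest and the Leibniz expansion of its determinant counts (with a sign) the perfect matchings of that subforest, so the determinant lies in $\{-1,0,+1\}$. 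Because the submatrix of $B$ indexed by the red-green dominoes of $M$ is a $g(T) \times g(T)$ block with determinant $\pm 1$, the matrix $B$ has full row rank $g(T)$; combined with Lemma \ref{about_dimension} this pins the dimension of the kernel to $r(T) - g(T) = \dim(T)$, and total unimodularity forces the cokernel of $B^\top$ on character lattices to be torsion-free, so the kernel is connected, hence a torus $\gm^{\dim(T)}$.
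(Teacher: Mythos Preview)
Your argument is correct, and the first two parts (identifying the automorphism group with the multiplicative kernel of the adjacency matrix, and the induction along $\ag$ to force $\lambda_u=1$ on green and orange vertices) match the paper's proof exactly. The divergence is in the final step, establishing that the remaining group is a torus of dimension $\dim(T)$. The paper invokes Remark~\ref{rem_kernel} for the dimension and then proves connectedness by an explicit parametrization: it shows, again by induction along $\ag$, that any choice of $(\lambda_i)_{i\notin M}\in\gm^{\dim(T)}$ extends uniquely to an element of the kernel, by solving at each red vertex $j$ in a domino the equation \eqref{kernel_eq} at its green partner. Your route instead passes through the biadjacency matrix of the red--green subforest and appeals to its total unimodularity (via the fact that forests have at most one perfect matching) to get both the rank and the torsion-freeness of the character lattice quotient in one stroke. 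Your argument is more structural and lattice-theoretic; the paper's is more constructive and yields explicit coordinates on $\aut(X^M_T(\alpha))$ tied to the matching $M$, which fits the running use of $\ag$ elsewhere in the paper. Both are valid; yours has the advantage of not relying on Remark~\ref{rem_kernel}, while the paper's makes the isomorphism with $\gm^{\dim(T)}$ completely explicit.
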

\begin{proof}
  Let us consider an automorphism in $G_T$ given by invertible
  elements $\lambda_i$.

  The condition that the equation \eqref{exchange_rel} for the vertex $i$ is
  preserved is
  \begin{equation}
    \label{kernel_eq}
    \prod_{j - i} \lambda_j = 1.
  \end{equation}

  This just means that the $\lambda_i$ belongs to the kernel of the
  adjacency matrix of $T$ (seen as an endomorphism of
  $\gm^T$). Looking at the induced linear equations on the tangent
  space at one, one can deduce from remark \ref{rem_kernel} that the
  dimension of $\aut(X^M_T(\alpha))$ is $\dim(T)$.

  By the same argument (using induction on the auxiliary graph
  $\ag$) as in the uniqueness step of the proof of Prop.
  \ref{reduction_via_matching}, every automorphism fixes $x_j$ for
  every green vertex $j$.

  By a similar argument (starting with orange vertices attached to
  green vertices in the auxiliary graph $\ag$), one can then
  prove that every automorphism fixes $x_j$ for every orange vertex
  $j$.

  There remains to show that $\aut(X^M_T(\alpha))$ is connected. Let
  us prove that, given any choice for the values of $\lambda_i$ for $i
  \not \in M$, there is a unique element of $\aut(X^M_T(\alpha))$
  extending this choice.

  This is once again done by induction using the auxiliary graph
  $\ag$. Let us consider a red vertex $j$ that is pointing in $\ag$
  only toward vertices with known $\lambda$. Then there is a unique
  way to fix the value $\lambda_j$ such that \eqref{kernel_eq} holds
  for the green vertex $i$ in the domino of $j$.

  This proves that the kernel is isomorphic to $\gm^{\dim(T)}$.
\end{proof}

Note that the torus $\aut(X^M_T(\alpha))$ and its action on
$X^M_T(\alpha)$ do not depend on $\alpha$. This action therefore
extends to varieties defined as the union of $X^M_T(\alpha)$ over some
family of parameters $\alpha$.

The torus $\aut(X^M_T(\alpha))$ can be written as a product of several
tori, indexed by the red-green components. Every factor acts only on
the red vertices inside a fixed red-green component. This
factorization will be useful later to describe free actions on some varieties.

\subsection{Genericity}

\label{generic_section}

A non-empty set $S$ of \red vertices in a \red-\green component $C$ is
called an \textbf{admissible set} if every \green vertex in $C$ has
either $0$ or $2$ neighbors in $S$.

\begin{lemma}
  Given a \red vertex $u$ in $C$, there is an admissible set containing $u$.
\end{lemma}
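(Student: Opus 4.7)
The plan is to root $C$ at $u$ and build the admissible set $S$ by a top-down propagation from $u$. Since $C$ is a red-green tree with $u$ red, the bipartite structure forces the children of every red vertex in the rooted tree to be green and the children of every green vertex to be red. Moreover, because every green vertex in $C$ has degree at least $2$, every green vertex in the rooted tree has at least one red child.

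I would then define $S$ recursively as follows: place $u$ in $S$, and whenever a red vertex $r$ has just been put into $S$, loop over the green children $g$ of $r$ in the rooted tree and, for each such $g$, pick arbitrarily one red child of $g$ and add it to $S$. The propagation moves strictly away from $u$, so since $C$ is finite it terminates and produces a set $S$ of red vertices containing $u$.

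To check admissibility, I would fix a green vertex $g$ of $C$ and consider its unique red parent in the rooted tree. If that parent lies in $S$, then $g$ was processed, so exactly one red child of $g$ was placed in $S$, and no other red neighbor of $g$ can be in $S$ (the parent accounts for one, and the only red children of $g$ that ever enter $S$ are those added when $g$ is processed, which happens only once and produces a single child); hence $|N(g)\cap S|=2$. If the red parent of $g$ is not in $S$, then $g$ is never processed, and the same observation shows that no red child of $g$ can be in $S$ either, giving $|N(g)\cap S|=0$. The only step that needs genuine care is this ``no leak'' property that no red neighbor of $g$ sneaks into $S$ from outside the top-down process; but that is immediate from the construction, because membership in $S$ is defined exclusively by the propagation along parent--child arrows from $u$.
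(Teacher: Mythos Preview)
Your proof is correct and follows essentially the same idea as the paper's: both build $S$ by propagating outward from $u$, adding one further red neighbor each time a green vertex acquires a single neighbor in $S$. Your version makes the tree rooting and the uniqueness-of-parent argument explicit, which renders the admissibility check (that no green vertex can acquire more than two neighbors in $S$) cleaner than the paper's terse greedy description.
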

\begin{proof}
  One can build an admissible set $S$ starting from $\{u\}$ by repeated
  addition of red vertices. If there is a green vertex $v$ with
  exactly one red neighbor in $S$, then add to $S$ one of the other red
  neighbors of $v$. Repeat until the set $S$ is admissible.
\end{proof}

Let us now introduce an explicit \textbf{genericity condition} on the
parameters attached to a given \red-\green component $C$.

\begin{framed}
  For every admissible set $S$ of \red vertices of $C$, the alternating
  product 
  \begin{equation}
    \label{condition_generique}
    \prod_{i \in S} \alpha_i^{\pm} \not = (-1) ^{\#S},
  \end{equation}
  where any two \red vertices sharing a common \green
  neighbor have opposite powers in the left hand side.
\end{framed}



\smallskip

\begin{lemma}
  \label{generic_and_jump}
  The genericity condition is preserved under jumping moves.
\end{lemma}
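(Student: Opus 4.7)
The plan is to do a case-by-case verification according to the three allowed kinds of jumps, exploiting the fact that jumping does not alter the canonical coloring of $T$. In particular, the red-green components, the collection of admissible subsets, and the well-defined sign convention in \eqref{condition_generique} are all intrinsic to the colored tree and are unchanged by a jump; only the numerical values of the $\alpha_i$ are modified. Since the genericity condition only involves $\alpha_i$ at red vertices of a single component $C$, it suffices to check how those values change under each jump and to show that every alternating product $\prod_{i \in S} \alpha_i^{\pm}$ over an admissible $S$ is in fact invariant.

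Two of the three jump types are immediate. In an orange-over-orange jump the vertex $v$ being jumped over has no red neighbor at all, because by the third description of the coloring every red vertex is adjacent only to green vertices, so red and orange are never adjacent. Hence no $\alpha$ at a red vertex is rescaled. In a green-over-red jump (green $u$ over red $v$) the neighbors of $v$ are all green, so again no $\alpha$ at a red vertex is modified. In both cases the genericity condition is preserved trivially.

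The real case is a red-over-green jump: red $u$ jumps over green $v$, with $\beta_u = 1$ and $\beta_w = \alpha_w/\alpha_u$ for every other red neighbor $w$ of $v$; all other $\alpha$'s at red vertices are unchanged. Fix an admissible set $S$ in $C$. By admissibility, $v$ has either $0$ or $2$ neighbors in $S$. In the first case no rescaled red vertex belongs to $S$, and $\prod_{i \in S} \beta_i^{\pm} = \prod_{i \in S} \alpha_i^{\pm}$ at once. In the second case $S$ contains exactly two red vertices $i_1, i_2$ adjacent to $v$ (possibly with $i_1 = u$); because they share the common green neighbor $v$, the sign convention forces them to carry opposite exponents $\pm 1$ in the alternating product. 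The two factors of $\alpha_u^{\pm 1}$ created by the rescaling (or, when $i_1 = u$, by the replacement $\alpha_u \mapsto 1 = \alpha_u \cdot \alpha_u^{-1}$) therefore cancel, and again the alternating product is unchanged. Applying this to every admissible $S$, the inequality \eqref{condition_generique} remains valid, which proves the lemma.

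The only nontrivial step is the red-over-green case, and the key conceptual point is that the ``opposite exponents for two red vertices sharing a common green neighbor'' rule hard-coded into the genericity condition is precisely what makes the rescaling factor $\alpha_u$ cancel on each admissible set. So no real obstacle arises beyond careful bookkeeping.
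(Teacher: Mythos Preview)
Your proof is correct and follows essentially the same approach as the paper: the two ``trivial'' jump types leave all red coefficients untouched (for precisely the reasons you give), and in the red-over-green case the cancellation on an admissible set $S$ comes from the fact that the two affected elements of $S$ share the green neighbor $v$ and hence carry opposite exponents. Your treatment is slightly more detailed than the paper's, in particular by explicitly handling the subcase $u\in S$, but the argument is the same.
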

\begin{proof}
  Indeed, consider the jumping move from a red vertex $u$ over a
  green vertex $v$. The coefficients of all red neighbors of $v$ are
  divided by $\alpha_u$. Let $S$ be an admissible set. If the vertex
  $v$ has no neighbor in $S$, nothing is changed in the genericity
  condition for $S$. Otherwise, the vertex $v$ has two neighbors in
  $S$. Then two terms are changed in the left-hand side of
  \eqref{condition_generique}, both being divided by $\alpha_u$. But
  they appear with opposite powers, hence the product is not changed.

  The two other kinds of jumping moves (green over red and orange over
  orange) do not change the parameters of red vertices.
\end{proof}


\subsection{Definition of the varieties}

Let us now carefully define the varieties that will be studied in the
rest of the article.

Let us fix a tree $T$, a choice function $\phi$ from the set of red-green
components of $T$ to the set $\{\gen, \ver\}$ and a maximum matching $M$ of
$T$.

For every \red-\green component $C$ such that $\phi(C)$ is $\gen$, let
us fix for every vertex $u$ of $C$ not covered by the maximum matching
$M$, an invertible value $\alpha_u$.

To this data, one associates a scheme $X^{\phi,M}_{T, \alpha}$ as
follows.

The variables are 
\begin{itemize}
\item $x_i$ and $x'_i$ for all vertices of $T$,
\item $\alpha_i$ for all vertices not covered by the matching $M$ in
  the red-green components $C$ of $T$ such that $\phi(C)$ is $\ver$.
\end{itemize}

The equations are 
\begin{itemize}
\item the system of equations \eqref{exchange_rel},
\item all variables $\alpha_i$ are invertible.
\end{itemize}

In fact, there is no true dependency on the matching $M$. Let us
consider two maximum matchings $M$ and $M'$. Using proposition
\ref{reduction_via_matching}, one can find an isomorphism between
$X^{\phi,M}_{T,\alpha}$ for arbitrary invertible parameters $\alpha$
and $X^{\phi,M'}_{T,\beta}$ for parameters $\beta$ depending on the
parameters $\alpha$.

One will therefore forget the matching and use the notation $X^\phi_T$
from now on, keeping the parameters $\alpha$ implicit as well.

\smallskip

Moreover, by lemma \ref{generic_and_jump}, if the genericity condition
\eqref{condition_generique} holds for the parameters $\alpha$ with
respect to one matching $M$, they will also hold for the corresponding
parameters $\beta$ for another matching $M'$.

One can therefore impose that the genericity condition
\eqref{condition_generique} holds for all $\gen$ red-green components
of $T$. This will always be assumed from now on.

\smallskip

Let us summarize this lengthy definition. Once the tree $T$ is chosen,
one picks a maximum matching $M$ of $T$. Any choice of matching will
lead to isomorphic varieties. One then decides for every red-green
component of $T$ either to take the union over all invertible
parameters or to fix some generic parameters.

\smallskip

One will use the simplified notation $X_T$ for orange trees,
as there is then no choice to be made for the function $\phi$. One
will also use the notations $X^\gen_T$ and $X^\ver_T$ when the
function $\phi$ is constant.

\begin{remark}
  One can as well consider forests instead of trees in the definition
  of the varieties $X^\phi_T$, but then everything factors according
  to the connected components. This possibility will be used
  implicitly in the rest of the article.
\end{remark}

\smallskip

Let us introduce the notation $U(x)$ for the open set defined by $x
\not= 0$.

\begin{lemma}
  \label{cover2}
  If $a-b$ is an edge in a tree $T$, then the two open sets $U(x_a)$
  and $U(x_b)$ cover the variety $X^\phi_T$.
\end{lemma}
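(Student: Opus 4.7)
The plan is to argue by contradiction: suppose some point $p \in X^\phi_T$ lies outside both $U(x_a)$ and $U(x_b)$, i.e.\ satisfies $x_a(p) = 0$ and $x_b(p) = 0$ simultaneously, and derive an impossibility directly from the exchange relation \eqref{exchange_rel} at the vertex $a$.

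Concretely, the equation at vertex $a$ reads
\begin{equation*}
  x_a x'_a = 1 + \alpha_a \prod_{a - j} x_j,
\end{equation*}
where the product runs over all neighbors $j$ of $a$. Since $b$ is one of those neighbors, $x_b$ is one of the factors of this product. Evaluating at $p$: the left-hand side vanishes because $x_a(p)=0$, while the product on the right-hand side also vanishes because $x_b(p)=0$, so the right-hand side equals $1$. This gives $0 = 1$, the desired contradiction.

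The only point worth noting is that, in the setup of the variety $X^\phi_T$, the scalar $\alpha_a$ is either a fixed invertible constant (if $a$ lies in a generic red-green component, or if $a$ is covered by the matching $M$ so that $\alpha_a = 1$) or an additional coordinate on $X^\phi_T$ constrained to be invertible (if $a$ lies in a versal red-green component and is uncovered by $M$); in every case the exchange relation at $a$ is literally of the form written above, with no hidden cancellation. I do not foresee any real obstacle: the argument is a one-line consequence of the shape of \eqref{exchange_rel} and the fact that $b$ is adjacent to $a$, together with the observation that the constant term $1$ on the right-hand side is never absorbed.
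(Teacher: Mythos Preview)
Your proof is correct and is essentially identical to the paper's own argument: the paper simply writes the exchange relation at $a$ as $x_a x'_a = 1 + \alpha_a x_b y$ (with $y$ the product of the remaining neighbor variables) and observes the same impossibility. Your additional remarks about the nature of $\alpha_a$ are accurate but unnecessary for the argument.
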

\begin{proof}
  This follows from the exchange relation
  \begin{equation*}
    x_a x'_a = 1 + \alpha_a x_b y,
  \end{equation*}
  where $y$ is some product of other cluster variables.
\end{proof}

\begin{remark}
  \label{induced_phi}
  When removing red vertices or green vertices in a tree $T$, some
  red-green components may split into several red-green
  components. One can then define a function $\widehat{\phi}$ on the
  new set of red-green components, whose value on a red green
  component $C$ is the value of $\phi$ in the unique red-green
  component of $T$ containing $C$. Abusing notation, one will denote
  this induced function $\widehat{\phi}$ simply by $\phi$.
\end{remark}

\section{Smoothness and free actions}

\begin{theorem}
  \label{main_smooth}
  For every choice of $\phi$, the variety $X^\phi_T$ is smooth.
\end{theorem}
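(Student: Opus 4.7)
The plan is to proceed by induction on the number of vertices of $T$, using the open covers provided by lemma \ref{cover2}.

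For the base case $|T|=1$, the unique vertex is red and $X^\phi_T$ is defined by $xx' = 1 + \alpha$ with $\alpha$ invertible. In the versal case, solving $\alpha = xx'-1$ identifies $X^\phi_T$ with the open subset $\{(x,x') \in \mathbb{A}^2 : xx' \neq 1\}$ of $\mathbb{A}^2$, which is smooth. In the generic case, $\alpha$ is fixed with $1+\alpha \neq 0$ (the genericity condition for the singleton red-green component reduces to $\alpha \neq -1$), so $X^\phi_T = \{xx' = 1+\alpha\} \cong \gm$, again smooth.

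For the inductive step, pick any leaf $a$ of $T$ with unique neighbor $b$. By lemma \ref{cover2}, $U(x_a) \cup U(x_b) = X^\phi_T$, so it suffices to prove each open subset is smooth. On $U(x_a)$, the exchange relation at $a$, namely $x_a x'_a = 1 + \alpha_a x_b$, can be solved for $x'_a$, and $x_a$ becomes a free invertible coordinate. Since $a$ is a leaf, the only other exchange relation involving $x_a$ is the one at $b$: $x_b x'_b = 1 + \alpha_b x_a \prod_{c \sim b, c\neq a} x_c$. Introducing $\tilde{\alpha}_b := \alpha_b x_a$ as a new free invertible coefficient at $b$, this rewrites as the exchange relation at $b$ on the smaller tree $T' := T \setminus \{a\}$. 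Using jumps in the groupoid $G_{T'}$ (proposition \ref{reduction_via_matching}) to redistribute $\tilde{\alpha}_b$ into the standard form for $X^{\phi'}_{T'}$---with $\phi'$ induced from $\phi$ as in remark \ref{induced_phi} and genericity preserved by lemma \ref{generic_and_jump}---one identifies $U(x_a)$ with a product of algebraic tori and a variety $X^{\phi'}_{T'}$ of the same type on a smaller tree. By the induction hypothesis, $X^{\phi'}_{T'}$ is smooth, hence so is $U(x_a)$. The open $U(x_b)$ is handled symmetrically, by solving the exchange at $b$ for $x'_b$ and absorbing $x_b$ into the coefficients at each neighbor of $b$, yielding an isomorphism with a product of tori and a variety of the same type on the forest $T \setminus \{b\}$.

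The hardest part is to verify that the reduced variety is of the form claimed for a canonically determined $\phi'$. This requires a case analysis on the color of the leaf $a$ and on whether $a$ is covered by the maximum matching $M$. In particular, when $a$ is a red leaf matched to its green neighbor $b$ in $M$ and $b$ has exactly two red neighbors in $T$, removing $a$ triggers the avalanche of orange dominoes described in the proof of lemma \ref{remove_one_red_vertex}, and the new free parameter $\tilde{\alpha}_b$ must be tracked through this recoloring to land on a red vertex uncovered by the new matching in $T'$. One must also verify that generic red-green components of $T$ restrict to components of $T'$ for which the genericity condition still holds, which is the content of lemma \ref{generic_and_jump}.
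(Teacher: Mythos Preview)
Your induction scheme has a genuine gap in the treatment of $U(x_b)$ when the leaf $a$ is red and lies in a \emph{generic} red-green component $C$. In that situation $b$ is green, and removing $b$ leaves a forest in which the other red neighbors of $b$ are still red and still lie in generic components. Your step ``absorbing $x_b$ into the coefficients at each neighbor of $b$'' then attaches the \emph{free invertible variable} $x_b$ to those red vertices. The result is not a variety of the form $X^{\phi'}_{T\setminus\{b\}}$ for any $\phi'$: on a generic component the parameters are supposed to be fixed, while on a versal component they are supposed to be free, but here you get fixed generic parameters multiplied by a common free variable shared across several components. Neither lemma~\ref{iso_over_gm_orange_green} nor lemma~\ref{iso_over_gm_versal} lets you detach $x_b$ from a red vertex in a generic component, and lemma~\ref{generic_and_jump} says nothing about this situation because $x_b$ is not a parameter undergoing a jump. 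Concretely, for $T=\TA_3$ with generic parameter $\alpha$ at vertex $1$, the open set $U(x_2)$ is cut out by $x_1x'_1=1+\alpha x_2$ and $x_3x'_3=1+x_2$ with $x_2\in\gm$; this is not a product of tori with any $X^{\phi'}_{\{1\}\sqcup\{3\}}$.

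The paper avoids this precisely by \emph{not} opening at a green vertex of a generic component. Instead it covers $X^\phi_T$ by the open sets $U(x_i)$ for $i$ ranging over an admissible set $S$ of red vertices (lemma~\ref{admi_cover}); genericity is exactly what makes these open sets a cover. Each $U(x_i)$ then removes a red vertex, so $x_i$ gets attached only to green (or, after the avalanche, orange) vertices and can be detached by lemma~\ref{iso_over_gm_orange_green}. Your sketch of $U(x_a)$ in that case is essentially this argument; the problem is that $U(x_a)$ alone is not a cover, and your proposed second chart $U(x_b)$ does not reduce. A secondary issue is that ``$\phi'$ induced from $\phi$ as in remark~\ref{induced_phi}'' is not defined when removing a vertex creates a new red-green component out of formerly orange vertices (e.g.\ removing an orange leaf from an orange tree); the paper handles this by explicitly declaring such new components versal (lemma~\ref{smooth_orange}).
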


\begin{proof}
  The proof is by induction on the size of the tree $T$.

  For the tree with only one vertex, the only equation is
  \begin{equation}
    \label{typeA1}
    x x' = 1 + \alpha.
  \end{equation}
  
  In the $\gen$ case when $\alpha$ is considered to have a fixed
  value, different from $-1$ by the genericity condition
  \eqref{condition_generique}, the variety is isomorphic to the
  punctured affine line $\gm$ and is therefore smooth.

  In the $\ver$ case when $\alpha$ is considered to be a variable and
  assumed to be invertible, the variety is an open set in the variety
  defined by \eqref{typeA1} where $\alpha$ is not assumed to be
  invertible. This last variety is isomorphic to the affine plane
  $\aff{2}$, hence smooth.

  The rest of the proof by induction is organized as follows. One
  first considers the case when the tree has at least one \red-\green
  component, and treat separately the case when there is a \red-\green
  component which is $\gen$ and the case when there is one which is
  $\ver$. Otherwise, the tree is \orange. These three cases are done
  in the next three subsections.
\end{proof}

Let us first state a few useful lemmas.

\begin{lemma}
  If one variable $x_i$ is assumed to be non-zero, then one can get
  rid of the associated variable $x'_i$ and of the equation
  \eqref{exchange_rel} of index $i$.
\end{lemma}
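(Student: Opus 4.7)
The plan is to exploit the fact that the exchange relation at vertex $i$,
\begin{equation*}
  x_i x'_i = 1 + \alpha_i \prod_{i-j} x_j,
\end{equation*}
is \emph{linear} in the variable $x'_i$, with leading coefficient $x_i$. As soon as $x_i$ is invertible, this single equation is equivalent to the explicit formula
\begin{equation*}
  x'_i = x_i^{-1}\Bigl(1 + \alpha_i \prod_{i-j} x_j\Bigr),
\end{equation*}
which expresses $x'_i$ as a regular function of the remaining variables on the open set $U(x_i)$.

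Concretely, I would let $Y$ denote the affine scheme in the ambient space obtained by removing the coordinate $x'_i$, keeping all other cluster variables, the relevant $\alpha_j$'s, the equations \eqref{exchange_rel} for all indices $\neq i$, the invertibility of the $\alpha_j$ as required in the definition of $X^\phi_T$, and the extra invertibility $x_i \neq 0$. The projection that forgets $x'_i$ gives a morphism $X^\phi_T \cap U(x_i) \to Y$, and the assignment above provides an inverse morphism $Y \to X^\phi_T \cap U(x_i)$; the two are mutually inverse since the $i$-th exchange relation holds tautologically after this assignment and plays no role in any of the other equations beyond constraining $x'_i$ itself.

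At the level of coordinate rings the statement is simply that, after inverting $x_i$, one can eliminate the generator $x'_i$ and the $i$-th relation from the presentation of the localized algebra; this is the usual elimination of a variable that appears linearly with an invertible leading coefficient. There is no real obstacle here — the only thing to be careful about is that $x'_i$ does not appear in any of the other exchange relations \eqref{exchange_rel}, which is immediate from their form (they involve only unprimed cluster variables and coefficients).
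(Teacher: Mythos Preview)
Your argument is correct and follows exactly the same idea as the paper's proof, which consists of the single sentence ``Indeed, one can just use the equation to eliminate $x'_i$.'' You have simply spelled out in detail what this elimination means (including the useful observation that $x'_i$ appears in no other relation), which is entirely appropriate.
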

\begin{proof}
  Indeed, one can just use the equation to eliminate $x'_i$.
\end{proof}

\begin{lemma}
  If one variable $x_i$ is assumed to be zero, then $x'_i$ becomes a
  free variable and the equation \eqref{exchange_rel} of index $i$
  reduces to
  \begin{equation*}
    -1 = \alpha_i \prod_{i-j} x_j.
  \end{equation*}
\end{lemma}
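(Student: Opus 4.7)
The plan is to substitute $x_i = 0$ directly into the exchange relation \eqref{exchange_rel} of index $i$, which reads
\begin{equation*}
  x_i x'_i = 1 + \alpha_i \prod_{i-j} x_j.
\end{equation*}
The left-hand side becomes $0 \cdot x'_i = 0$ independently of the value of $x'_i$, so this particular equation no longer imposes any constraint on $x'_i$. To conclude that $x'_i$ is genuinely a free variable, I would also point out that $x'_i$ does not appear in any other equation of the system: in \eqref{exchange_rel}, the primed cluster variables only show up on the left-hand side of the exchange relation indexed by the same vertex, while the right-hand sides involve only the unprimed variables $x_j$ (and the coefficients $\alpha_j$).

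Having disposed of $x'_i$, what remains of the equation of index $i$ after setting $x_i = 0$ is $0 = 1 + \alpha_i \prod_{i-j} x_j$, which is exactly the stated identity $-1 = \alpha_i \prod_{i-j} x_j$. There is no real obstacle here; the only thing one has to be careful about is the observation that $x'_i$ appears in a single exchange relation, so that dropping it has no effect on the remaining equations.
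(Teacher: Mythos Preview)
Your argument is correct and is exactly the intended justification: substitute $x_i=0$ into \eqref{exchange_rel}, observe that the left-hand side vanishes so the equation becomes $0=1+\alpha_i\prod_{i-j}x_j$, and note that $x'_i$ occurs in no other equation of the system. The paper in fact states this lemma without proof, treating it as immediate from the shape of the exchange relations; your write-up simply spells out that immediate verification.
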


\medskip

Let us now introduce a useful variant of the varieties $X_T^\phi$. Let
$v$ be a vertex of $T$. Let $X^\phi_T[v]$ be defined just as
$X_T^\phi$, but with one more invertible variable $\gamma_v$ attached
to the vertex $v$ as a coefficient (playing the same role as
$\alpha_v$ in the equations). This variable defines a morphism
$\gamma_v$ from $X^\phi_T[v]$ to $\gm$.

\begin{lemma}
  \label{iso_over_gm_orange_green}
  If $v$ is an orange or green vertex, then $X^\phi_T[v]$ is
  isomorphic as a variety over $\gm$ to $X_T^\phi \times \gm$ endowed
  with the projection to the second factor.
\end{lemma}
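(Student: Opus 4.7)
The strategy is to apply Proposition \ref{reduction_via_matching} and observe that it absorbs the extra parameter $\gamma_v$ into a change of variables precisely because $v$ is not a red vertex. First, I would view $X^\phi_T[v]$ as the total space, as $\gamma_v$ ranges over $\gm$, of a family $X_T(\alpha)$ inside the groupoid $G_T$: assign $\alpha_v = \gamma_v$, the value $1$ to every other vertex covered by $M$, and the usual parameter (a free variable in a $\ver$ component, a fixed generic value in a $\gen$ component) at every uncovered red vertex.

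Applying Proposition \ref{reduction_via_matching} with the matching $M$ produces a sequence of jumping moves realizing an isomorphism $X^\phi_T[v] \cong X_T(\beta)$ in $G_T$, where $\beta$ is supported on the uncovered red vertices. Each jumping move is an explicit substitution of the form $x_i \mapsto \lambda_i x_i$, $x'_i \mapsto x'_i/\lambda_i$, with $\lambda_i$ a Laurent monomial in the $\alpha$'s (hence in $\gamma_v$ and the remaining $\alpha_j$'s), so the resulting isomorphism is defined algebraically in a family over $\gm$.

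The decisive input is statement $(a)$ of Proposition \ref{reduction_via_matching}: the function $\beta$ depends only on the $\alpha$-values on red vertices of $T$. Because $v$ is orange or green, the extra parameter $\gamma_v$ sits on a non-red vertex and therefore never enters any $\beta_i$. Thus $\beta$ coincides with the parameters defining the plain variety $X^\phi_T$, the target of the groupoid isomorphism is $X^\phi_T$ itself, and $\gamma_v$ survives in the new coordinates as a free invertible variable entering no exchange relation. This yields the desired isomorphism $X^\phi_T[v] \cong X^\phi_T \times \gm$, and it is automatically over $\gm$ because the $\gm$-factor is read off from the untouched coordinate $\gamma_v$.

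The main conceptual subtlety lies in the $\gen$ situation: a priori one might fear that, as $\gamma_v$ varies, the reduced coefficients at uncovered red vertices would drift through different generic values, producing a nontrivial family rather than a product. Statement $(a)$ rules this out, and Lemma \ref{generic_and_jump} guarantees that the genericity condition \eqref{condition_generique} is preserved throughout, so no extra verification is needed at the end. I expect that invoking $(a)$ for $\gamma_v$---using only that $v$ is not red---is the sole delicate step; the remainder is bookkeeping on the algebraic changes of variables.
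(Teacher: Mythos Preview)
Your argument is correct and follows essentially the same route as the paper: both proofs invoke Proposition~\ref{reduction_via_matching} to jump the extra coefficient $\gamma_v$ away, the paper by explicitly tracing jumps along edges of the auxiliary graph $\ag$ starting at green or orange vertices, and you by citing statement~$(a)$ of that proposition (which is precisely the abstract formulation of the same observation). Your additional remark about Lemma~\ref{generic_and_jump} and the $\gen$ case is a welcome bit of extra care that the paper leaves implicit.
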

\begin{proof}
  By proposition \ref{reduction_via_matching} and its proof, one can
  find an isomorphism in the groupoid $G_T$ between $X^\phi_T$ and
  $X^\phi_T[v]$ that only changes the coordinates $x_i$ for orange and
  green vertices. More precisely, using the auxiliary oriented graph
  $\ag$, one can find a sequence of jumps (corresponding to edges in
  $\ag$ starting with a green or orange vertex) that makes the
  coefficient $\gamma_v$ disappear from the equations.

  The isomorphism associated with this sequence of jumps is
  multiplying the variables $x_i$ by monic Laurent monomials in the
  parameter $\gamma_v$, hence defines an isomorphism over $\gm$.
\end{proof}

\begin{lemma}
  \label{iso_over_gm_versal}
  If $v$ is a red vertex in a versal red-green component $C$, then
  $X^\phi_T[v]$ is isomorphic as a variety over $\gm$ to $X_T^\phi
  \times \gm$ endowed with the projection to the second factor.
\end{lemma}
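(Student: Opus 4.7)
The plan is to reduce to the case where $v$ is uncovered by the chosen maximum matching, in which case the extra coefficient $\gamma_v$ and the already-present parameter $\alpha_v$ can be combined by a trivial change of variables.

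First, by Lemma \ref{rouge_dans_matching}, since $v$ is a red vertex of $T$, there exists a maximum matching $M$ of $T$ that does not contain $v$. As the isomorphism class of the varieties $X^\phi_T$ (and similarly $X^\phi_T[v]$) does not depend on the choice of maximum matching (using Proposition \ref{reduction_via_matching} and the discussion before the definition of $X^\phi_T$), I may use this particular $M$ to present both sides. Note that the genericity condition is only imposed on $\gen$ components, and $C$ is $\ver$, so there is no restriction on $\alpha_v$.

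With this choice of $M$, the vertex $v$ is an uncovered red vertex in the versal component $C$, so $\alpha_v$ is already one of the defining invertible variables of $X^\phi_T$. In $X^\phi_T[v]$, the exchange relation at $v$ reads
\begin{equation*}
  x_v x'_v \,=\, 1 + \alpha_v \gamma_v \prod_{v - j} x_j,
\end{equation*}
and $\gamma_v$ appears in no other equation, while $\alpha_v$ and $\gamma_v$ are both invertible. Perform the automorphism of $\gm_{\alpha_v} \times \gm_{\gamma_v}$ given by
\begin{equation*}
  (\alpha_v, \gamma_v) \longmapsto (\alpha_v \gamma_v,\, \gamma_v),
\end{equation*}
i.e.\ set $\tilde{\alpha}_v := \alpha_v \gamma_v$ and keep $\gamma_v$ as a second coordinate. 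In the new variables, the defining equations of $X^\phi_T[v]$ involve only $\tilde{\alpha}_v$ (and the other unchanged parameters and cluster variables), while $\gamma_v$ is a free invertible variable. This exhibits an isomorphism of schemes
\begin{equation*}
  X^\phi_T[v] \;\simeq\; X^\phi_T \times \gm,
\end{equation*}
in which the $\gm$ factor records $\gamma_v$, so that the morphism $\gamma_v \colon X^\phi_T[v] \to \gm$ corresponds to projection on the second factor. This is the required isomorphism over $\gm$.

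There is essentially no obstacle here beyond arranging the right presentation: the only subtlety is that the definition of $X^\phi_T[v]$ assumes some fixed matching, and one must know that $v$ can be chosen uncovered, which is precisely the content of Lemma \ref{rouge_dans_matching}. Once that is done, the proof is a one-line change of variables, in contrast to Lemma \ref{iso_over_gm_orange_green}, where a whole cascade of jumps was needed to eliminate $\gamma_v$ since no coefficient was available at $v$ a priori.
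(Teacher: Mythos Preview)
Your proof is correct. The key observation that Lemma \ref{rouge_dans_matching} lets you choose a matching $M'$ avoiding $v$, so that $\alpha_v$ is already a versal coordinate and the extra $\gamma_v$ can be absorbed by the substitution $\tilde{\alpha}_v = \alpha_v\gamma_v$, is exactly the paper's first case. The only point you pass over lightly is that the variety $X^\phi_T[v]$ \emph{together with its structure map $\gamma_v$} is independent of the matching; this is not stated explicitly in the paper for $X^\phi_T[v]$, only for $X^\phi_T$, but it follows by the same jumping argument and is harmless here.

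The paper takes a slightly different route: it keeps the originally chosen matching $M$ fixed and treats two cases. When $v$ is uncovered it does exactly your change of variables. When $v$ is covered by $M$, rather than switching matchings, it uses the auxiliary graph $\ag$ and a sequence of jumps (as in Proposition \ref{reduction_via_matching}) to push $\gamma_v$ onto the uncovered red vertices of $C$, and then relabels the $\alpha_i$'s. Your approach trades that explicit jump computation for an appeal to matching-independence plus Lemma \ref{rouge_dans_matching}; it is shorter and conceptually cleaner, while the paper's version has the mild advantage of never leaving the fixed presentation and thus making the compatibility with $\gamma_v$ manifest.
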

\begin{proof}
  If the red vertex $v$ is not covered by the matching $M$ chosen to
  define $X^\phi_T$, then one has two coefficient variables $\alpha_v$
  and $\gamma_v$ attached to the vertex $v$. By the simple change of
  coordinates $\alpha_v := \alpha_v \gamma_v$ and $\gamma_v := \gamma_v$,
  one gets the expected isomorphism.

  Assume now that that red vertex $v$ is covered by the matching $M$.

  By proposition \ref{reduction_via_matching} and its proof, one can
  find an isomorphism in the groupoid $G_T$ between $X^\phi_T[v]$ and
  a variety $X^{\phi,M}_{T,\beta}$ that only changes the coordinates $x_i$
  for orange and green vertices and for red vertices in the red-green
  component $C$. More precisely, using the auxiliary oriented graph
  $\ag$, one can find a sequence of jumps that moves the coefficient
  $\gamma_v$ towards the red vertices in $C$ not covered by the
  matching. At the end, every new coefficient $\beta_i$ is the product
  of $\alpha_i$ by a Laurent monomial in $\gamma_v$.

  The isomorphism associated with this sequence of jumps is
  multiplying the variables $x_i$ by monic Laurent monomials in the
  parameter $\gamma_v$, hence defines an isomorphism over $\gm$. One
  can then compose this isomorphism with a relabeling of the
  coefficients $\alpha_i := \beta_i$ in order to get the expected
  isomorphism, still defined over $\gm$, between $X^\phi_T[v]$ and
  $X_T^\phi \times \gm$.
\end{proof}

One could say that the coefficient $\gamma_v$ can be \textbf{detached} from $T$
in these cases. This will be used frequently in the rest of the
article.

\subsection{Trees with a generic component}

\label{generic_is_smooth}

One assumes now that $T$ has at least two vertices and a $\gen$
component $C$.

Let us pick an admissible set $S$ of red vertices in $C$, as defined in \S \ref{generic_section}.



\begin{lemma}
  \label{admi_cover}
  The open sets $U(x_i)$ for $i \in S$ form a covering of $X^\phi_T$.
\end{lemma}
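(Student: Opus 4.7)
The plan is to argue by contradiction: suppose there is a point $p \in X^\phi_T$ at which $x_i = 0$ for every $i \in S$. Since a red vertex has only green neighbors, the exchange relation \eqref{exchange_rel} at each such $i$ collapses to $0 = 1 + \alpha_i \prod_{i-j} x_j$, giving
\begin{equation*}
\prod_{j \,:\, i-j} x_j(p) = -\alpha_i^{-1},
\end{equation*}
which in particular is non-zero; hence $x_j(p)\ne 0$ for every green vertex $j$ adjacent to a vertex of $S$.

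Next I would construct a sign function $\epsilon \colon S \to \{\pm 1\}$ such that any two elements of $S$ sharing a common green neighbor in $C$ receive opposite signs. The auxiliary graph on $S$, with an edge between $i_1$ and $i_2$ whenever they share a green neighbor, is acyclic: a cycle would lift to an actual cycle in $T$, since the green midpoints along it are forced to be pairwise distinct (a coincidence would give some green vertex at least three neighbors in $S$, violating admissibility), contradicting the tree property of $T$. A forest is bipartite, so such an $\epsilon$ exists. This is the main structural input, and the only step requiring real thought.

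Finally I would take the alternating product of the collapsed exchange relations weighted by $\epsilon$:
\begin{equation*}
\prod_{i \in S} \Bigl( \prod_{j \,:\, i-j} x_j(p) \Bigr)^{\epsilon_i}
= \prod_{i \in S} \bigl( -\alpha_i^{-1} \bigr)^{\epsilon_i}.
\end{equation*}
On the left, each green $j$ appears with exponent $\sum_{i \in S,\, i-j} \epsilon_i$; by admissibility this sum has either $0$ or exactly $2$ terms, and in the latter case the two signs cancel by construction of $\epsilon$. So the left-hand side equals $1$. On the right, since $(-1)^{\epsilon_i} = -1$ for both values of $\epsilon_i$, the product equals $(-1)^{\#S} \prod_{i \in S} \alpha_i^{-\epsilon_i}$. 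Rearranging gives $\prod_{i \in S} \alpha_i^{-\epsilon_i} = (-1)^{\#S}$, which directly contradicts the genericity condition \eqref{condition_generique} for the admissible set $S$. Everything beyond the bipartiteness of the auxiliary graph is routine bookkeeping.
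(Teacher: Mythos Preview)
Your proof is correct and follows essentially the same route as the paper: assume all $x_i$ with $i\in S$ vanish, read off $\alpha_i\prod_{j-i}x_j=-1$ from the exchange relations, and take the alternating product to contradict the genericity condition \eqref{condition_generique}. The only difference is that you spell out the existence of the alternating sign function $\epsilon$ via bipartiteness of the auxiliary graph on $S$, whereas the paper tacitly assumes such a sign assignment (already implicit in the very formulation of \eqref{condition_generique}).
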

\begin{proof}
  Indeed, the complement of their union is the set where all variables
  $x_i$ for $i \in S$ vanish. This implies that
  \begin{equation}
    \alpha_i \prod_{j-i} x_j = -1
  \end{equation}
  for every $i$ in $S$. Taking the alternating product of these
  equalities gives
  \begin{equation}
    \label{nongen}
    \prod_{i \in S} \alpha_i^{\pm} = (-1)^{\# S},
  \end{equation}
  because for every green vertex $j$ attached by an edge to some
  element of $S$, the cluster variable $x_j$ appears exactly twice by
  definition of admissible sets, hence disappears in the alternating
  product.

  But the equation \eqref{nongen} is incompatible with the genericity
  condition \eqref{condition_generique}.
\end{proof}

Let us now show that the open sets $U(x_i)$ are smooth.

Let $F$ be the forest $ T \setminus \{i\}$. In the forest $F$, the
coloring is changed only on the red-green component containing $i$,
where an avalanche of orange dominoes can take place when removing
$i$. The red-green component $C$ is therefore split into a number of
red-green components. Let us moreover introduce a function $\phi$ on
$F$, which is $\gen$ on every red-green component coming from $C$, and
unchanged on all other red-green components.

\begin{lemma}
  \label{smooth_generic}
  The open set $U(x_i)$ is isomorphic to $\gm \times X^\phi_F$.
\end{lemma}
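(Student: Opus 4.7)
The plan is to use the exchange relation at $i$ to eliminate $x'_i$ and free $x_i$ as a $\gm$-coordinate, then to absorb the remaining occurrences of $x_i$ into coefficients on $F$, and finally to use the jumping machinery to put those coefficients into the standard form of $X^\phi_F$.

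First, on $U(x_i)$ the exchange relation
\begin{equation*}
  x_i x'_i = 1 + \alpha_i \prod_{i-j} x_j
\end{equation*}
determines $x'_i$ as a regular function of the remaining variables and of $x_i \in \gm$. Since $i$ is red, every neighbor $j$ of $i$ in $T$ is a green vertex of the component $C$, hence covered by the matching $M$ so that $\alpha_j = 1$. The exchange relation at such a $j$ then reads
\begin{equation*}
  x_j x'_j = 1 + x_i \prod_{j-k,\,k\neq i} x_k,
\end{equation*}
which is exactly the exchange relation at $j$ inside the forest $F = T\setminus\{i\}$ with an extra coefficient $\gamma_j := x_i$ attached. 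All other exchange relations (those at vertices not adjacent to $i$) carry over unchanged from $T$ to $F$.

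Second, the resulting scheme is $X^\phi_F$ with one extra coefficient $\gamma_j = x_i$ attached at each green neighbor $j$ of $i$, all of them constrained to take the same value $x_i \in \gm$. By Lemma~\ref{remove_one_red_vertex}, each such $j$ is either still green or has been turned orange by the avalanche in the canonical coloring of $F$. In both cases one can detach the coefficient $\gamma_j$ through a sequence of jumps, exactly as in the proof of Lemma~\ref{iso_over_gm_orange_green}: the jumps are carried out along the edges of the auxiliary graph $\ag$ relative to the maximum matching of $F$ obtained by augmenting $M$ with the dominoes of the avalanche. The associated change of variables multiplies the $x_k$'s by monic Laurent monomials in $x_i$, hence defines an isomorphism over $\gm$. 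Iterating this detachment over all green neighbors of $i$ (and all cascading orange dominoes) transforms the scheme into $X^\phi_F$ with its prescribed coefficient structure and induced function $\phi$ as in Remark~\ref{induced_phi}. The genericity condition on the new $\gen$ components of $F$ coming from $C$ is preserved along the way by Lemma~\ref{generic_and_jump}.

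Combining these two steps yields the desired isomorphism $U(x_i) \cong \gm \times X^\phi_F$, the $\gm$-factor being the coordinate $x_i$. The main obstacle is the bookkeeping in step two: one must handle simultaneously the several copies of the coefficient $x_i$ sitting at the green neighbors of $i$, and follow them through the avalanche of new orange dominoes so that, at the end, the coefficients on vertices covered by the augmented matching all become $1$ and those on red vertices not covered match the prescribed generic values of $X^\phi_F$. The cleanest way to do this is to run Proposition~\ref{reduction_via_matching} once on the full configuration, rather than invoking Lemma~\ref{iso_over_gm_orange_green} piecewise.
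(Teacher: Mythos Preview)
Your approach coincides with the paper's: eliminate $x'_i$, treat the invertible $x_i$ as a coefficient attached to the (green) neighbors of $i$ in $F$, and then detach it via Lemma~\ref{iso_over_gm_orange_green} since those neighbors become green or orange in $F$.

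The one point that needs strengthening is the genericity check. Invoking Lemma~\ref{generic_and_jump} only tells you that jumping moves preserve the genericity condition relative to the \emph{original} collection of admissible sets of $C$; it does not by itself guarantee that the genericity conditions for the \emph{new} red-green components $D$ of $F$ arising from the split of $C$ are satisfied, because those components come with their own families of admissible sets. What you actually need (and what the paper verifies) is the combinatorial observation that every admissible set $S'$ of such a $D$ is already an admissible set of $C$, once you trace through the avalanche of orange dominoes triggered by removing $i$. Then condition~\eqref{condition_generique} for $S'$ in $D$ is literally one of the conditions already assumed for $C$, and the detachment jumps (which affect only green/orange coefficients) leave the relevant $\alpha$-values on red vertices untouched.
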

\begin{proof}
  The condition that $x_i$ is not zero allows one to get rid of the
  variable $x'_i$ by using the equation \eqref{exchange_rel} of index
  $i$. What remains are the equations for the forest $F = T \setminus
  \{i\}$, where now $x_i$ is treated as a parameter attached to all
  neighbors of $i$ in $T$.

  Because all neighbors of $i$ in $T$ are green, they become either
  green or orange in $F$. It follows from lemma
  \ref{iso_over_gm_orange_green} that one can, without changing the
  variety, consider instead that the parameter $x_i$ is not attached
  to any vertex of $F$.

  Let us check that the genericity condition still holds on all $\gen$
  red-green components. If the component $D$ does not come from the
  splitting of $C$, then the genericity conditions are unchanged on
  this red-green component. Otherwise, let us choose an admissible set
  in $D$. It was then already an admissible set in $C$, by inspection
  of what happens during the avalanche of orange dominoes. Therefore
  the genericity condition for $D$ is inherited from that for $C$.
\end{proof}

One has therefore obtained an isomorphism
\begin{equation}
  U(x_i) \simeq \gm \times X^\phi_F,
\end{equation}
which is smooth by induction. Therefore $X^\phi_T$ is also smooth.

\subsection{Trees with a versal component}

One assumes now that $T$ has at least two vertices, and has a $\ver$
component $C$. Let us choose a red leaf $v$ in this component. By
proposition \ref{rouge_dans_matching}, one can find a maximum matching
$M$ not containing $v$. Therefore there is a coefficient variable
$\alpha_v$.


Let $u$ be the \green vertex adjacent to $v$. By lemma \ref{cover2},
the two open sets $U(x_u)$ and $U(x_v)$ cover $X^\phi_T$.

\smallskip

Let us first prove that $U(x_v)$ is smooth.

Let $T'$ be the tree $T\setminus \{v\}$. The coloring of $T'$ is obtained
from $T$ by an avalanche of orange dominoes. The dimension of $T'$ is
$\dim(T) - 1$.

The avalanche may split the red-green component of $T$ containing
$v$ into several components. Let $\phi$ be the function which maps all
these new components to the $\ver$ condition, and unchanged condition
on all the other red-green components.

\begin{lemma}
  \label{smooth_versal_red}
  The open set $U(x_v)$ is isomorphic to $\gm^2 \times X^\phi_{T'}$.
\end{lemma}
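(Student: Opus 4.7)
The plan is to eliminate $x'_v$ on $U(x_v)$ using the exchange relation at $v$, identify what remains with $X^\phi_{T'}[u]$, and then appeal to lemma \ref{iso_over_gm_orange_green} together with the fact that $\alpha_v$ decouples completely.

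First, on $U(x_v)$ the exchange relation at the leaf $v$ reads $x_v x'_v = 1 + \alpha_v x_u$, so I can solve $x'_v = (1+\alpha_v x_u)/x_v$ and discard the variable $x'_v$ together with that equation. Since $v$ was chosen red and uncovered by $M$, and $C$ is versal, $\alpha_v$ is one of the free invertible parameters of $X^\phi_T$; because $v$ is a leaf, $\alpha_v$ does not appear in any other exchange relation, so after the elimination it becomes a free $\gm$-factor, contributing one of the two $\gm$'s in the statement.

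Next I look at where $x_v$ appears in the remaining equations. It only occurs in the exchange relation at $u$, in the monomial $\alpha_u\, x_v \prod_{u-j,\, j\neq v} x_j$. Since $u$ is green it is covered by $M$, whence $\alpha_u=1$, so the surviving system is exactly the system of exchange relations on $T'$ with a single additional invertible coefficient $\gamma_u := x_v$ attached to the vertex $u$. I need to check that this recovers $X^\phi_{T'}[u]$: the coloring of $T'$ is the one produced by the avalanche of orange dominoes of lemma \ref{remove_one_red_vertex}, the component $C$ possibly splits into several red-green components that are all declared $\ver$ by the induced $\phi$ of remark \ref{induced_phi}, and the free coefficients $\alpha_i$ at uncovered red vertices of $T$ (other than $v$) translate to free coefficients on the uncovered reds of $T'$ with respect to any maximum matching of $T'$, possibly after the Laurent-monomial change of variables provided by proposition \ref{reduction_via_matching}.

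Finally, in $T'$ the vertex $u$ is either still green or has become orange (during the avalanche $u$ can pair up with another of its former red neighbors, but cannot turn red). In either case lemma \ref{iso_over_gm_orange_green} applies and gives an isomorphism $X^\phi_{T'}[u] \cong X^\phi_{T'} \times \gm$ of varieties over $\gm$. Putting everything together,
$$U(x_v) \;\cong\; \gm_{\alpha_v}\times X^\phi_{T'}[u] \;\cong\; \gm \times X^\phi_{T'} \times \gm \;=\; \gm^2 \times X^\phi_{T'},$$
as required. The main delicate point in this scheme is the second step, namely the bookkeeping that shows the reduced system really equals $X^\phi_{T'}[u]$ and not a twist of it; this is exactly the content of the uniqueness statement in proposition \ref{reduction_via_matching}, applied once we have chosen a maximum matching of $T'$ that extends $M\setminus\{\text{dominoes involving }v\}$ compatibly with the avalanche.
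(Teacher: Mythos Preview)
Your proof is correct and follows essentially the same approach as the paper: eliminate $x'_v$, observe that $\alpha_v$ decouples to give one $\gm$, treat the surviving $x_v$ as an extra coefficient on $u$ in $T'$, and detach it via lemma \ref{iso_over_gm_orange_green} since $u$ is green or orange in $T'$. Your additional bookkeeping about why the restricted matching still presents $X^\phi_{T'}$ is more careful than the paper's version, but the argument is the same.
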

\begin{proof}
  Assuming that $x_v$ is not zero allows one to get rid of the
  variable $x'_v$ by using \eqref{exchange_rel} with index $v$. The
  coefficient variable $\alpha_v$ also disappears from the equations:
  this gives one factor $\gm$.

  Then the variable $x_v$ is seen as a coefficient attached to the
  vertex $u$ in $T'$, which is either green or orange. The coefficient
  can therefore be detached by lemma \ref{iso_over_gm_orange_green},
  and one obtains a factor isomorphic to $\gm \times X^\phi_{T'}$.
\end{proof}

Therefore $U(x_v)$ is smooth by induction.

\smallskip

Let us now prove that $U(x_u)$ is smooth. Let us choose instead a
matching $M$ containing the domino $u-v$, thanks to lemma
\ref{presque_pavage}. This amounts to go through an isomorphism in
the groupoid $G_T$, hence preserves the open set $U(x_u)$.

Let $F$ be the forest $T\setminus \{u\}$. Because $u$ is green, the
coloring of $F$ is obtained from that of $T$ by restriction and the
dimension of $F$ is $\dim(T) + 1$. Let $v, T_1, \dots, T_k$ be the
connected components of the forest $F$. By removing the domino $u-v$,
one can restrict the matching $M$ to a matching of the forest $F$.

The red-green component of $T$ containing $u$ splits into several
red-green components in $F$, one of them being the vertex $v$. One
takes the $\ver$ condition on all of these red-green components of
$F$, and unchanged condition on all the other red-green components.

\begin{lemma}
  \label{smooth_versal_green}
  The open set $U(x_u)$ is isomorphic to 
  \begin{equation}
    X^{\ver}_{\{v\}} \times \prod_{j=1}^k X^\phi_{T_j},
  \end{equation}
  where the first component is the vertex $v$ with coefficient variable $x_u$.
\end{lemma}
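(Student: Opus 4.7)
The plan is to use the condition $x_u \neq 0$ to eliminate $x'_u$ via the exchange relation at $u$, and then identify the remaining data with $X^{\ver}_{\{v\}} \times \prod_j X^\phi_{T_j}$. Since the chosen matching $M$ contains the domino $u-v$, after the reduction of Proposition \ref{reduction_via_matching} one has $\alpha_u = 1$ and $\alpha_v = 1$; the equation at $u$ then reads $x_u x'_u = 1 + x_v \prod_{w \in N} x_w$, where $N$ is the set of neighbors of $u$ in $T$ other than $v$, and can be solved for $x'_u$ on $U(x_u)$.

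After this elimination, the remaining equations split according to the connected components $\{v\}, T_1, \dots, T_k$ of $F = T \setminus \{u\}$. At the isolated vertex $v$ the exchange relation becomes $x_v x'_v = 1 + x_u$ with $x_u$ invertible, which is exactly the defining system of $X^{\ver}_{\{v\}}$ once $x_u$ is identified with the coefficient variable; this gives the first factor of the claimed product. On each $T_j$, the equations are those of $X^\phi_{T_j}$ except that at every vertex $w \in N_j := N \cap T_j$ the coefficient $\alpha_w$ acquires an extra factor of $x_u$, so that the single invertible parameter $x_u$ is attached simultaneously at every vertex of $N_j$.

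It remains to detach this parameter from each $T_j$. Every vertex of $N_j$ is orange, green (reached from $u$ via a green-orange or green-green edge) or red inside a red-green component of $T_j$ inherited from $C$ by the avalanche splitting and hence still versal, so each individual attachment is of the type handled by lemmas \ref{iso_over_gm_orange_green} and \ref{iso_over_gm_versal}. The main obstacle is that the same parameter $x_u$ appears at several vertices of $N_j$ at once; this is resolved exactly as in the proof of Proposition \ref{reduction_via_matching} by scheduling the jumps along a linear extension of the acyclic auxiliary graph $\ag$ attached to the restricted matching $M_j := M \cap T_j$. A single pass absorbs every copy of $x_u$, either into an existing red coefficient in a versal component or harmlessly through orange and green vertices, via changes of variables that are monic Laurent monomials in $x_u$. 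This produces the isomorphism over $\gm$ between the $T_j$-part of $U(x_u)$ and $X^\phi_{T_j}$, and combining with the first factor yields the required decomposition.
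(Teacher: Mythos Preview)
Your approach is correct and essentially the same as the paper's: eliminate $x'_u$, recognize the result as a fiber product over $\gm$ (with coordinate $x_u$) of the pieces indexed by the connected components of $F$, keep $x_u$ as the coefficient on the isolated vertex $v$, and detach it from every $T_j$ using lemmas \ref{iso_over_gm_orange_green} and \ref{iso_over_gm_versal}.

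Two inaccuracies are worth cleaning up. First, since $u$ is \emph{green}, removing it does not trigger any avalanche: the coloring of $F=T\setminus\{u\}$ is obtained by restriction, so there is no ``avalanche splitting'' here (that phenomenon occurred when removing the red leaf $v$ in the previous lemma). Second, because $T$ is a tree, each connected component $T_j$ of $T\setminus\{u\}$ contains exactly one neighbor of $u$; hence $|N_j|=1$ and your ``main obstacle'' of having $x_u$ attached at several vertices of $N_j$ does not arise. The genuine point---that the \emph{same} $x_u$ is shared across the different $T_j$---is exactly what the ``over $\gm$'' clause in lemmas \ref{iso_over_gm_orange_green} and \ref{iso_over_gm_versal} handles, so a single application of the appropriate lemma to each $T_j$ suffices and the elaborate scheduling along $\ag$ is unnecessary.
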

\begin{proof}
  Setting $x_u\not =0$ in the equations allows to get rid of the
  variable $x'_u$. The result can be described as a fiber product over
  $\gm$, where the same coefficient variable $x_u$ is attached to
  every connected component of $F$ at a red vertex in a versal
  red-green component. By repeated use of lemma
  \ref{iso_over_gm_versal} on all connected components (but not on
  the isolated vertex $v$), one finds that the open set $U(x_u)$ is isomorphic to the product
  \begin{equation}
    X^{\ver}_{v} \times \prod_{j=1}^k X^\phi_{T_j},
  \end{equation}
  where the first component is the vertex $v$ with coefficient $x_u$.
\end{proof}

Therefore $U(X_u)$ is smooth by induction, and hence $X^\phi_T$ is also smooth.

\subsection{Orange trees}

Let us now assume that $T$ is an \orange tree and let us choose one
domino $u-v$ in the perfect matching of $T$. By lemma \ref{cover2},
the two open sets $U(x_u)$ and $U(x_v)$ cover the variety $X_T$.

By symmetry between $u$ and $v$, it is enough to prove that $U(x_u)$ is
smooth.

Let $T_1, \dots, T_k$ be the trees attached to $u$ in $T\setminus
\{v\}$. The $T_i$ are clearly \orange trees.

Let $R$ be the connected component of $v$ in $T\setminus \{u\}$. The
tree $R$ is obtained by removing a leaf in an \orange tree, hence (by
lemma \ref{dimension1}) has dimension $1$ and a unique \red-\green
component. Moreover, $R$ has a maximum matching avoiding only $v$ and
the vertex $v$ is \red in the coloring of $R$.

\begin{lemma}
  \label{smooth_orange}
  The open set $U(x_u)$ is isomorphic to the product of the varieties
  $X_{T_i}$ and the variety $X^\ver_R$.
\end{lemma}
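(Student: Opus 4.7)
The plan is to follow the same pattern used for the previous two open sets (lemmas \ref{smooth_versal_red} and \ref{smooth_versal_green}): eliminate $x'_u$ using the exchange relation at $u$, reinterpret $x_u$ as a shared coefficient on the forest $T\setminus\{u\}$, and then split across the connected components $T_1,\dots,T_k,R$.

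First I would impose $x_u\neq 0$ and use the equation \eqref{exchange_rel} of index $u$ to solve for $x'_u$, removing both $x'_u$ and that equation from the system. What remains are the exchange relations on the forest $T\setminus\{u\}$, in which $x_u$ now appears as a coefficient multiplying the right-hand side at every vertex of $T\setminus\{u\}$ adjacent to $u$. So $U(x_u)$ fibers over $\gm$ (parameter $x_u$) as the fibered product, taken over the components of $T\setminus\{u\}$, of the varieties associated to each component with $x_u$ attached at the vertex neighboring $u$.

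For each $T_i$, which is orange and whose vertex adjacent to $u$ is therefore orange, lemma \ref{iso_over_gm_orange_green} produces an isomorphism over $\gm$ between the variety associated to $T_i$ with coefficient $x_u$ attached and $X_{T_i}\times\gm$ equipped with projection to $\gm$; that is, each $T_i$-factor becomes a trivial $\gm$-bundle over the $x_u$-line. For $R$, I would first verify the coloring: restricting the perfect matching of $T$ and removing the domino $u$--$v$ produces a matching of $R$ missing only $v$, of size $(|R|-1)/2$, which is necessarily maximum. By lemma \ref{dim_is_not_covered} one has $\dim(R)=1$, so by lemma \ref{dimension1} the tree $R$ has a unique red-green component, in which $v$ is the unique uncovered red vertex. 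Consequently the variety $X^\ver_R$ is defined by the exchange relations on $R$ with a single free invertible coefficient $\alpha_v$, and identifying $x_u$ with $\alpha_v$ yields an isomorphism over $\gm$ between the $R$-piece and $X^\ver_R$, where the map $X^\ver_R\to\gm$ records $\alpha_v$.

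Finally, the fibered product over $\gm$ of the trivial bundles $X_{T_i}\times\gm$ with the map $X^\ver_R\to\gm$ collapses to $\prod_i X_{T_i}\times X^\ver_R$, the $\gm$-factor carrying $x_u$ being absorbed into $X^\ver_R$ via the identification $\alpha_v=x_u$. The main subtlety is really the bookkeeping for $R$: one needs to check that what one obtains after eliminating $x'_u$ on the $R$-side matches, on the nose, the exchange relation of $X^\ver_R$ at $v$ when $\alpha_v$ is set to $x_u$. Once this is in place the isomorphism is immediate.
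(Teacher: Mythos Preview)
Your proof is correct and follows essentially the same approach as the paper: eliminate $x'_u$ via the exchange relation, view $x_u$ as a coefficient attached to the neighbors of $u$ in $T\setminus\{u\}$, detach it from each orange tree $T_i$ via lemma \ref{iso_over_gm_orange_green}, and leave it attached to the red vertex $v$ of $R$ so that the $R$-piece is exactly $X^\ver_R$. Your extra verification of the coloring and dimension of $R$ merely repeats what the paper records in the paragraph immediately preceding the lemma.
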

\begin{proof}
  Assuming that $x_u$ is not zero allows to eliminate the variable
  $x'_u$ and the equation \eqref{exchange_rel} of index $u$.

  There remains the equations for the union of $R$ and the $T_i$, with
  $x_u$ considered as a parameter attached to all of them at the
  former neighbors of $u$.

  Because the trees $T_i$ are \orange, one can consider instead (by
  lemma \ref{iso_over_gm_orange_green}) that the parameter $x_u$ is
  only attached to the vertex $v$ of $R$.

  This proves that the open set $U(x_u)$ is isomorphic to the product
  of the varieties $X_{T_i}$ and the variety $X^\ver_R$.
\end{proof}

By induction, this proves that $U(x_u)$ is smooth. Therefore $X_T$ is
smooth too.

\subsection{Torus actions}

\label{torusaction}

Let $T$ be a tree and let $\phi$ be a choice in $\{\gen,\ver\}$ for
every red-green component of $T$. Let us also choose a maximum
matching $M$ of $T$.

One can deduce from proposition \ref{description_aut} and the remarks
following it that there is an action of an algebraic torus of
dimension $\dim(T)$ on $X^\phi_T$, and that this torus (and its
action) can be written as a product over red-green components $C$ of
tori $\Lambda^C_T$.

Let us define a smaller torus $\Lambda^\phi_T$ acting on $X^\phi_T$ as
the product of $\Lambda^C_T$ over all $\gen$ red-green components of
$T$. Let us call the \textbf{rank} of $(T, \phi)$ and denote by
$\rk(T,\phi)$ the sum of the dimensions of the generic red-green
components of $T$. This is the dimension of $\Lambda^\phi_T$.

\begin{proposition}
  If $\phi(C)$ is generic, the action of $\Lambda^C_T$ on $X^\phi_T$ is
  free.
\end{proposition}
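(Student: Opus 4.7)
The plan is to suppose that some non-identity element $\lambda \in \Lambda^C_T$ fixes a point $p \in X^\phi_T$, and derive a contradiction with the genericity condition \eqref{condition_generique} on the $\gen$ component $C$. Set $S = \{i \text{ red in } C : \lambda_i \neq 1\}$, which is non-empty. For every $i \in S$, the equality $\lambda_i x_i(p) = x_i(p)$ with $\lambda_i \neq 1$ forces $x_i(p) = 0$, and the exchange relation \eqref{exchange_rel} then yields
\begin{equation*}
\alpha_i \prod_{j \sim i} x_j(p) = -1,
\end{equation*}
so in particular $x_j(p) \neq 0$ for every green neighbour $j$ of $i$. On the other hand, the constraint \eqref{kernel_eq} of Proposition \ref{description_aut} at a green vertex $v$ of $C$ becomes, after using $\lambda_j = 1$ for $j \notin S$, the relation $\prod_{j \sim v,\, j \in S} \lambda_j = 1$; this rules out $|N(v) \cap S| = 1$, so every green vertex of $C$ satisfies $|N(v) \cap S| \in \{0, 2, 3, 4, \dots\}$.

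The main combinatorial step is to extract from $S$ a non-empty admissible subset $S' \subseteq S$. Consider the subforest $H$ of $C$ on $S$ together with the green vertices having at least one neighbour in $S$; by the previous paragraph every green in $H$ has degree $\geq 2$, so the leaves of $H$ are red. Pick a leaf $r_* \in S$, let $v_*$ be its unique green neighbour, choose any $r_1 \in N(v_*) \cap S \setminus \{r_*\}$, and set $S'_0 = \{r_*, r_1\}$. Then iterate: while some green $v$ has exactly one neighbour in the current $S'_i$, add a missing element of $N(v) \cap S$ (one exists because $|N(v) \cap S| \geq 2$). Let $T_i$ be the induced subgraph of $C$ on $S'_i$ together with all greens having some neighbour in $S'_i$. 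Using that $C$ is a tree, I check by induction that $T_i$ remains a subtree of $C$: a vertex $r'$ added at step $i{+}1$ cannot be adjacent to two distinct greens of $V(T_i)$ without creating a cycle in $C$, so it is attached to $T_i$ by only the edge through the green being processed, and in particular greens in $T_i$ always have degree $1$ (pending) or $2$ (resolved), never $3$ or more. The process terminates for cardinality reasons, and at termination every green in $V(T)$ has degree exactly $2$, so $S'$ is admissible. This combinatorial lemma is the main obstacle of the proof.

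With such $S'$ in hand, assign signs $\epsilon_i \in \{\pm 1\}$ to the elements of $S'$ that alternate across shared green neighbours; this is consistent because the auxiliary graph on $S'$ with an edge joining pairs that share a green is a sub-forest of $C$ (a cycle of length $k$ in it would lift to a cycle of length $2k$ in $C$), hence bipartite. Raising each exchange equation $\alpha_i \prod_{j \sim i} x_j(p) = -1$ to the power $\epsilon_i$ and taking the product over $i \in S'$, every green $j$ appears with total exponent $\sum_{i \in S',\, i \sim j} \epsilon_i$, which vanishes in both admissible cases $|N(j) \cap S'| = 0$ and $|N(j) \cap S'| = 2$. Since $(-1)^{\pm 1} = -1$, this simplifies to
\begin{equation*}
\prod_{i \in S'} \alpha_i^{\epsilon_i} = (-1)^{|S'|},
\end{equation*}
in direct contradiction with \eqref{condition_generique} applied to $S'$. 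Hence no non-trivial $\lambda \in \Lambda^C_T$ can fix any point, and the action of $\Lambda^C_T$ on $X^\phi_T$ is free.
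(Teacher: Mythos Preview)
Your proof is correct and follows essentially the same route as the paper's: build an admissible set $S'$ inside $\{i\text{ red in }C:\lambda_i\neq 1\}$ (using the kernel equation \eqref{kernel_eq} to guarantee each green meets this set in $\neq 1$ points), then derive a contradiction with the genericity condition \eqref{condition_generique}. The paper is terser---it appeals to the earlier one-line construction of admissible sets and then invokes Lemma~\ref{admi_cover} for the contradiction---whereas you spell out the tree-growing argument carefully and redo the alternating-product step inline, but the underlying argument is identical.
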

\begin{proof}
  Let us assume that there is a non-trivial element $\lambda=(\lambda_i)_i$ of
  $\Lambda^C_T$ that fixes a point $(x_i)_i$ in $X^\phi_T$.

  Let $i$ be a red vertex in $C$ such that $\lambda_i \not= 1$. For
  every green neighbor $j$ of $i$, one can find another red vertex $k$
  incident to $j$ such that $\lambda_k \not = 1$, because of
  \eqref{kernel_eq}. Iterating this process, one can build an
  admissible set $S$ (as defined in \S \ref{generic_section}), such
  that $\lambda_s \not=1$ for every $s \in S$.

  Because $\lambda$ fixes the given point, one then has $x_s = 0$ for
  every $s \in S$. But this is impossible by Lemma \ref{admi_cover}.
\end{proof}

\begin{corollary}
  \label{maxifree}
  There is on $X^\phi_T$ a free action by a torus $\Lambda^\phi_T$ of
  dimension the rank $\rk(T,\phi)$.
\end{corollary}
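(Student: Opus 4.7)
The plan is to deduce this corollary from the preceding proposition by a straightforward product argument, with no genuine geometric input beyond what has already been established. First, I would dispose of the dimension count: by definition $\Lambda^\phi_T = \prod_C \Lambda^C_T$, where $C$ ranges over the generic red-green components of $T$, so
\begin{equation*}
  \dim \Lambda^\phi_T \;=\; \sum_{C \text{ generic}} \dim \Lambda^C_T \;=\; \rk(T,\phi)
\end{equation*}
by the very definition of the rank in the paragraph preceding the proposition.

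Next, I would recall from Proposition \ref{description_aut} and the remarks following it that an element of $\Lambda^C_T$ acts as the identity on $x_j$ for every green or orange vertex $j$, and, by the factorization statement, also on $x_j$ for every red vertex $j$ lying in a red-green component different from $C$. Consequently, distinct factors $\Lambda^C_T$ and $\Lambda^{C'}_T$ act on disjoint sets of coordinates, their actions commute, and they genuinely assemble into a product action of $\Lambda^\phi_T$ on $X^\phi_T$.

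For freeness, I would take an element $\lambda = (\lambda^C)_C \in \Lambda^\phi_T$ fixing some point $x \in X^\phi_T$. Because the factors act on disjoint coordinates, each factor $\lambda^C$ must already fix $x$ on its own. Since each $C$ is generic, the preceding proposition forces $\lambda^C = 1$, and hence $\lambda = 1$.

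I do not foresee any real obstacle: all the geometric content (producing the admissible set $S$ with $\lambda_s \neq 1$ and invoking Lemma \ref{admi_cover}) was already packaged into the preceding proposition, and the corollary reduces to bookkeeping about a product of commuting free actions on disjoint coordinates.
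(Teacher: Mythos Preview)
Your proposal is correct and matches the paper's intent: the corollary is stated there without proof, as an immediate consequence of the preceding proposition together with the factorization of the torus (and its action) over red-green components. Your explicit unpacking of the product argument is exactly the implicit reasoning.
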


This gives $X^\phi_T$ the structure of a principal bundle with
structure group $\Lambda^\phi_T$. As one will see later, this bundle
is not trivial in general (\textit{i.e.} not a product), as can be
seen from our results for the cohomology already in type $\TA_3$.


\section{Number of points over finite fields and Euler characteristic}

Let us denote by $N^\phi_T(q)$ the number of points on $X^\phi_T$ over
the finite field $\fq$.

When the tree is orange, one will use the shorthand notation
$N_T$. When the function $\phi$ is constant, one will use the
notations $N^\ver_T$ and $N^\gen_T$.

\begin{proposition}
  The numbers $N^\phi_T(q)$ are monic polynomials in $q$ of degree
  $\dim X_T^\phi$.
\end{proposition}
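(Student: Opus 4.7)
The plan is to proceed by induction on $|T|$, using the same open-covering arguments as in Theorem \ref{main_smooth} combined with inclusion--exclusion. The base case $|T|=1$ is a direct computation: in the generic case $X^\gen_T\simeq\gm$, giving $N^\gen_T(q)=q-1$, while in the versal case a count stratified by whether $\alpha=-1$ gives $N^\ver_T(q)=q^2-q+1$. Both are monic of the expected degrees ($1$ and $2$ respectively).

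For the inductive step, I would follow the three-case division of Theorem \ref{main_smooth}: tree with a generic component, tree with only versal components, or orange tree. In each case, the smoothness proof provides an open cover $\{U(x_i)\}_{i\in A}$ of $X^\phi_T$ together with an identification of every $U(x_i)$ with a product $\gm^{c_i}\times X^{\phi'}_{T'}$, where $T'$ is a smaller forest and $\phi'$ is the induced function of Remark \ref{induced_phi}. The essential additional step is to show that every non-empty intersection $U_B=\bigcap_{i\in B}U(x_i)$ admits an analogous decomposition $\gm^{c_B}\times X^{\phi'_B}_{T\setminus B}$. I would prove this by iterating the arguments of Lemmas \ref{smooth_generic}, \ref{smooth_versal_red}, \ref{smooth_versal_green} and \ref{smooth_orange}: imposing $x_i\ne 0$ for all $i\in B$ lets one eliminate the dual variables $x'_i$, after which the parameters that appear as coefficients on former neighbors of $B$ can be detached via Lemmas \ref{iso_over_gm_orange_green} and \ref{iso_over_gm_versal}. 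The induced function $\phi'_B$ on $T\setminus B$ and the preservation of the genericity condition \eqref{condition_generique} are controlled by tracking admissible sets across successive vertex removals, exactly as in the proof of Lemma \ref{smooth_generic}.

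Granting this decomposition, the inductive hypothesis gives that every $|U_B(\fq)|$ is a polynomial in $q$ of degree $\dim U_B=\dim X^\phi_T$ with leading coefficient $1$, since the torus factor $\gm^{c_B}$ contributes a monic factor $(q-1)^{c_B}$. Inclusion--exclusion then yields
\begin{equation*}
N^\phi_T(q)=\sum_{\emptyset\ne B\subseteq A}(-1)^{|B|+1}|U_B(\fq)|,
\end{equation*}
a polynomial whose top-degree coefficient equals $\sum_{k=1}^{|A|}(-1)^{k+1}\binom{|A|}{k}=1$, so $N^\phi_T(q)$ is monic of degree $\dim X^\phi_T$.

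The hardest part will be the bookkeeping for $U_B$ in the generic case, where $B$ ranges over subsets of an admissible set $S$: successive red-vertex removals trigger cascaded avalanches of orange dominoes, and one must verify at each stage both that the induced function still takes value $\gen$ on the relevant components and that condition \eqref{condition_generique} survives on them. In the other two cases the covering has only two opens, so the combinatorics will be comparatively light.
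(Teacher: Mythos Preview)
Your inductive strategy is sound, and the leading-coefficient computation via inclusion--exclusion is correct. However, your route differs from the paper's in a way worth noting.

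The paper does \emph{not} reuse the open coverings of Theorem~\ref{main_smooth}. Instead it proves three new decomposition lemmas (Lemmas~\ref{number_generic}, \ref{number_versal}, \ref{number_orange}) that express $X^\phi_T$ as a \emph{disjoint union} of locally closed pieces, by stratifying according to whether a single cluster variable $x_v$ vanishes or not. In the generic and versal cases this gives just two pieces, $\gm^{c}\, X^\phi_{T\setminus\{v\}}$ and $\aff{1}\, X^\phi_F$, and the polynomial $N^\phi_T$ is simply the sum of the counts. In particular, in the generic case the paper picks a single red leaf $v$, not an entire admissible set, so no inclusion--exclusion and no analysis of multi-fold intersections is needed.

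Your approach, by contrast, commits you in the generic case to analysing all $2^{|S|}-1$ intersections $U_B$ for $B\subseteq S$. Your claim that $U_B\simeq\gm^{|B|}\times X^{\phi'_B}_{T\setminus B}$ should hold (the neighbours of $B$ are green in $T$, hence green or orange in $T\setminus B$, so the invertible coefficients $x_i$ can be detached via Lemma~\ref{iso_over_gm_orange_green}), and the preservation of genericity should iterate from the single-vertex argument in Lemma~\ref{smooth_generic}. But this is genuinely more bookkeeping than the paper needs: the stratification approach handles the $x_v=0$ locus directly and checks genericity there once (including the subtle case where an admissible set in $F$ extends to one in $T$ by adjoining $v$), rather than chasing avalanches through iterated removals. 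What you gain is a proof that stays closer to the smoothness argument; what the paper gains is simplicity and a cleaner handle on the closed strata, which it reuses later (e.g.\ in Proposition~\ref{decoupe_independante}).
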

\begin{proof}
  The proof is by induction on the size of the tree.

  For the tree with one vertex, the number of points is $q-1$ in the
  $\gen$ case and $q^2 - q + 1$ in the $\ver$ case, by the description
  given at the beginning of the proof of theorem \ref{main_smooth}.

  Then either the tree has a red-green component, which can be $\gen$
  or $\ver$, or it is an orange tree. The proof is decomposed into the
  three following geometric decomposition lemmas, or rather into their obvious
  corollaries on the number of points over finite fields.
\end{proof}

Let $T$ be a tree and $v$ be a red leaf in a red-green component $C$
of $T$. Let $u$ be the neighbor of $v$. Removing the vertex $v$
creates an orange avalanche and may separate the red-green component
$C$ into several ones. Let $\phi$ be the induced genericity condition
(as defined in Remark \ref{induced_phi}). Let $F$ be the forest $T
\setminus \{u, v\}$. The component $C$ may also split into several
red-green components in $F$. Let $\phi$ be the induced genericity
condition.

\smallskip

Let us consider now the case of a generic red-green component $C$.
\begin{lemma}
  \label{number_generic}
  In this situation, the variety $X^\phi_T$ can be decomposed as
  \begin{equation}
    X^\phi_T  = \gm  X^\phi_{T\setminus
      \{v\}} \sqcup \aff{1} X^\phi_F.
  \end{equation}
\end{lemma}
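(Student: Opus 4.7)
The approach is to stratify $X^\phi_T$ by whether $x_v$ vanishes and analyze each locally closed piece separately. The open stratum $\{x_v \neq 0\}$ is precisely the open set $U(x_v)$ covered by Lemma \ref{smooth_generic} applied with $i = v$: since $v$ is red and its unique neighbor $u$ is green, the argument of that lemma goes through verbatim and yields $\{x_v \neq 0\} \simeq \gm \times X^\phi_{T \setminus \{v\}}$, accounting for the first summand.

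For the closed stratum $\{x_v = 0\}$, I eliminate variables using the exchange relations. Since $v$ is a red leaf with unique neighbor $u$, relation \eqref{exchange_rel} at $v$ degenerates to $0 = 1 + \alpha_v x_u$, forcing $x_u = -1/\alpha_v$, which is a well-defined invertible scalar because $\alpha_v$ is an invertible generic parameter. Then the relation at $u$ reads
$$ x_u x'_u = 1 + \alpha_u x_v \prod_{u-j,\, j \neq v} x_j = 1,$$
so $x'_u = -\alpha_v$ is also determined, while $x'_v$ is unconstrained and supplies the $\aff{1}$ factor. Substituting $x_u = -1/\alpha_v$ into the exchange relation at each red neighbor $w$ of $u$ in $T$ other than $v$ replaces the coefficient $\alpha_w$ by $-\alpha_w/\alpha_v$; all other equations remain unchanged. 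Hence the closed stratum is isomorphic to $\aff{1}$ times the variety on the forest $F = T \setminus \{u,v\}$ defined by these modified equations.

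The remaining task is to identify this variety with $X^\phi_F$. Removing $u$ (green) and $v$ (red leaf) may trigger an orange avalanche that splits the component $C$ into new red-green components $C_1, \dots, C_k$, each inheriting the $\gen$ label via Remark \ref{induced_phi}. For any admissible set $S_i$ in such a $C_i$, I extend it to a set $S$ in $C$ by adjoining $v$ together with exactly one additional red neighbor of $u$, chosen so that $u$ acquires exactly two neighbors in $S$; the other green vertices of $C$ already satisfy the admissibility condition by admissibility of $S_i$ together with inspection of the avalanche. The genericity inequality \eqref{condition_generique} for $S$ in $C$, combined with the opposite-sign rule for the red vertices $v$ and the adjoined neighbor, then translates into the genericity inequality for $S_i$ with the modified coefficients $-\alpha_w/\alpha_v$.

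The main obstacle is this last combinatorial step: verifying that the lifting $S_i \mapsto S$ can always be performed despite the orange avalanche, and that the factors of $\alpha_v$ introduced by the coefficient modifications telescope correctly against the contribution of $v$ to the alternating product over $S$. Both are bookkeeping arguments based on the third characterization of the canonical coloring, in the same spirit as the genericity-inheritance argument already used within Lemma \ref{smooth_generic}.
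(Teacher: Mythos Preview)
Your strategy matches the paper's: stratify by $x_v$, invoke Lemma~\ref{smooth_generic} on the open piece, and on the closed piece eliminate $x'_v, x_u, x'_u$ to land on the equations for $F$ with modified coefficients, then verify genericity. Two points deserve correction, and one simplification is worth noting.

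\smallskip

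\textbf{No avalanche in $F$.} Removing the green vertex $u$ keeps the coloring by restriction (this is one of the stability operations listed after the third description of the coloring), and then $v$ is isolated. So the coloring of $F = T\setminus\{u,v\}$ is simply the restriction of that of $T$; the orange avalanche occurs only for $T\setminus\{v\}$, not for $F$. The splitting of $C$ into components $C_1,\dots,C_k$ is just the disconnection caused by deleting $u$, and each $C_i$ contains exactly one former red neighbour $w_i$ of $u$.

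\smallskip

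\textbf{The genericity check has a gap.} Your extension rule ``adjoin $v$ together with one additional red neighbour of $u$'' does not cover all admissible sets. An admissible $S_i \subset C_i$ need not contain $w_i$ (for instance, take $C_i$ with a green vertex of valency $\geq 3$ away from $w_i$). If $w_i \notin S_i$, then the coefficients appearing in the alternating product for $S_i$ are \emph{unchanged} in $F$, and $S_i$ is already admissible in $C$ (the only new green vertex to check is $u$, which has zero neighbours in $S_i$); so genericity is inherited directly and no extension is needed. If $w_i \in S_i$, then $S_i \cup \{v\}$ is admissible in $C$ (now $u$ has exactly the two neighbours $w_i$ and $v$), and the sign/$\alpha_v$ bookkeeping you describe goes through. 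Your proposed ``$v$ plus one more neighbour'' fails in the first case: adjoining an extra red neighbour $w'$ from another $C_j$ would leave the green neighbours of $w'$ inside $C_j$ with exactly one element of $S$, violating admissibility. The paper's proof separates these two cases explicitly.

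\smallskip

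\textbf{A simplification you missed.} The paper first passes (via Lemma~\ref{presque_pavage}) to a maximum matching containing the domino $u\!-\!v$, so that $\alpha_v = 1$ and hence $x_u = -1$ on the closed stratum. This makes the modified coefficient at each red neighbour of $u$ simply $-\alpha_w$, and the genericity comparison between $S_i$ and $S_i \cup \{v\}$ becomes a one-line sign count, with no $\alpha_v$-telescoping to track.
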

\begin{proof}
  Either $x_v$ is not zero or $x_v$ is zero. This will give the
  required disjoint union. In the case where $x_v \not = 0$, one uses
  lemma \ref{smooth_generic}. This gives the first term of the right
  hand side.
  
  Let us pick a maximum matching $M$ of $T$ containing $v$. This is
  possible by lemma \ref{presque_pavage}. This does not change the
  open set $U(x_v)$ and its complement, up to isomorphism.

  Assume now that $x_v$ is zero. Then $x'_v$ is a free variable, and
  $x_u$ is equal to $-1$, because there are no coefficients on
  $v$. One then gets rid of $x'_u$. The coloring of the forest $F$ is
  by restriction of the coloring of $T$. Therefore the parameter $x_u
  = -1$ is attached to some red vertices of $F$, as a coefficient.

  One has to check that the genericity condition still holds on every
  connected component of $F$. Let $S$ be an admissible set in one of
  these components. Either $S$ was already an admissible set in $T$,
  and then the genericity condition still holds, or it contains
  exactly one of the neighbors of $u$ in $T$. In this case, one can
  extend $S$ by adding $v$ to form an admissible set in $T$. The
  genericity condition for $S \sqcup \{v\}$ in $T$ implies the
  condition for $S$, because of the additional $-1$ coefficient
  attached to $S$ in $F$.
\end{proof}

Keeping the same notations, let us consider now the case of a versal
red-green component $C$.
\begin{lemma}
 \label{number_versal}
  In this situation, the variety $X^\phi_T$ can be decomposed as
  \begin{equation}
    X^\phi_T = \gm^2 X^\phi_{T\setminus \{v\}} \sqcup \aff{1} X^\phi_F.
  \end{equation}
\end{lemma}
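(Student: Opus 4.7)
The plan is to decompose $X^\phi_T$ into the open set $U(x_v)$ and its complement $\{x_v = 0\}$, in close analogy with the proof of Lemma \ref{number_generic}. The open piece $U(x_v) \simeq \gm^2 \times X^\phi_{T\setminus\{v\}}$ is provided directly by Lemma \ref{smooth_versal_red} and yields the first term of the decomposition.

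For the closed locus $\{x_v=0\}$, I would first change the maximum matching $M$ so that it contains the domino $u-v$; this is possible by Lemma \ref{presque_pavage} and amounts to an isomorphism in the groupoid $G_T$, so it does not alter $U(x_v)$ or its complement up to isomorphism. In this model $\alpha_v = 1$. Setting $x_v = 0$ then makes $x'_v$ a free variable (the $\aff{1}$ factor), collapses the exchange relation at $v$ to $0 = 1 + x_u$ so that $x_u = -1$, and reduces the exchange relation at $u$ to $x_u x'_u = 1$, hence $x'_u = -1$ (the product there contains the factor $x_v = 0$ and so vanishes). What remains is the system of exchange relations on $F = T \setminus \{u,v\}$, with the constant $x_u = -1$ appearing as an additional multiplicative coefficient at every former neighbor of $u$ in $F$.

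The last step is to absorb this constant into $X^\phi_F$. At each red neighbor of $u$, which lies in one of the versal pieces into which $C$ splits in $F$, this is done by Lemma \ref{iso_over_gm_versal}; at each green or orange neighbor it is done by Lemma \ref{iso_over_gm_orange_green}. These absorptions can be carried out independently. No genericity check is needed on $C$ (which is versal), and genericity conditions on the other red-green components of $T$ are preserved because those components are unaffected by the removal and by the jumps used in the detachments (by Lemma \ref{generic_and_jump}). This gives $\{x_v = 0\} \simeq \aff{1} \times X^\phi_F$ and combines with the first piece to yield the claimed decomposition. The main subtlety is precisely the simultaneous absorption of the single constant $-1$ at several attachment points, possibly in different components of $F$: this works here because each versal piece of $C$ provides an independent free coefficient variable to absorb its share, and each jump at a green or orange vertex is local, so the individual detachments do not interfere with one another.
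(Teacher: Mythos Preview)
Your argument is correct and follows the same route as the paper's proof: split according to whether $x_v$ vanishes, invoke Lemma~\ref{smooth_versal_red} for the open piece, switch to a matching containing the domino $u$--$v$ via Lemma~\ref{presque_pavage}, and then detach the constant $x_u=-1$ from the neighbors of $u$ in $F$. You are in fact slightly more careful than the paper, which only mentions the red neighbors of $u$ and Lemma~\ref{iso_over_gm_versal}; your treatment also covers possible green or orange neighbors via Lemma~\ref{iso_over_gm_orange_green}, and your remark that the detachments are independent (because distinct neighbors of $u$ lie in distinct connected components of $F$) is a useful clarification.
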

\begin{proof}
  Either $x_v$ is not zero or $x_v$ is zero. This will give the
  required disjoint union. If $x_v \not =0$, using lemma
  \ref{smooth_versal_red} gives the first term of the right hand side.
  
  Let us pick a maximum matching $M$ of $T$ containing $v$. This is
  possible by lemma \ref{presque_pavage}. This does not change the
  open set $U(x_v)$ and its complement, up to isomorphism.

  Assume now that $x_v$ is zero. Then $x'_v$ is a free variable, and
  $x_u$ is equal to $-1$, because there are no coefficients on
  $v$. One then gets rid of $x'_u$. The coloring of the forest $F$ is
  by restriction of the coloring of $T$. Therefore the parameter $x_u = -1$
  is attached to red vertices of $F$. By lemma
  \ref{iso_over_gm_versal}, it can be detached, and this just gives
  the expected second term.
\end{proof}

Let $T$ be an orange tree and $u-v$ be a domino in $T$. Let
$(T_{u,i})_i$ (resp. $(T_{v,j})_j$) be the connected components of $T
\setminus \{u,v\}$ that were attached to $u$ (resp. to $v$). All these
trees are orange. Let us denote by $S_{u,i}$ and $S_{v,j}$ the forests
obtained from them by removing the vertex that was linked to $u$ or
$v$. These forests are unimodal, in the sense that they have one
unimodal connected component, all the other connected components being
orange.

\begin{lemma}
  \label{number_orange}
  In this situation, one has
  \begin{equation*}
    X_T = \gm^2 \prod_i X_{T_{u,i}} \prod_j X_{T_{v,j}}  \sqcup \aff{1} \prod_i X^\ver_{S_{u,i}}\prod_j X_{T_{v,j}} \sqcup \aff{1} \prod_i X_{T_{u,i}}\prod_j X^\ver_{S_{v,j}}  .
  \end{equation*}
\end{lemma}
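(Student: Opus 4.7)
The plan is to stratify $X_T$ according to the vanishing of $x_u$ and $x_v$, and to identify each stratum geometrically using the tools developed so far. Since $u-v$ is an edge of $T$, lemma \ref{cover2} implies that $U(x_u)$ and $U(x_v)$ cover $X_T$, so the closed loci $\{x_u=0\}$ and $\{x_v=0\}$ are disjoint and one has the stratification
\begin{equation*}
  X_T = \bigl(U(x_u)\cap U(x_v)\bigr) \sqcup \{x_u=0\} \sqcup \{x_v=0\}.
\end{equation*}
I expect these three pieces, in order, to correspond to the three summands in the statement.

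On the open piece $U(x_u)\cap U(x_v)$, one eliminates both $x'_u$ and $x'_v$ via the exchange relations at $u$ and $v$. What remains are the equations of the orange forest $T\setminus\{u,v\}=\bigsqcup_i T_{u,i}\sqcup \bigsqcup_j T_{v,j}$, where the invertible variable $x_v$ appears as a coefficient at each (orange) vertex $w_{u,i}$ and $x_u$ at each (orange) $w_{v,j}$. Applying lemma \ref{iso_over_gm_orange_green} componentwise detaches both parameters, yielding the first term $\gm^2 \prod_i X_{T_{u,i}} \prod_j X_{T_{v,j}}$, the $\gm^2$ corresponding to the now-free $x_u$ and $x_v$.

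On the closed piece $\{x_u=0\}$, the variable $x'_u$ becomes free, contributing a factor $\aff{1}$. The exchange at $u$ collapses to $x_v\prod_i x_{w_{u,i}}=-1$, forcing $x_v$ and all $x_{w_{u,i}}$ to lie in $\gm$, while the exchanges at $v$ and at each $w_{u,i}$ reduce to $xx'=1$ and merely determine $x'_v$ and $x'_{w_{u,i}}$. On the $v$-side, $x_v$ appears only as a coefficient on the orange $T_{v,j}$ at the orange vertices $w_{v,j}$, so lemma \ref{iso_over_gm_orange_green} detaches it, producing a factor $\gm\times\prod_j X_{T_{v,j}}$ where the $\gm$ corresponds to the now-free $x_v$. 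On the $u$-side, $x_{w_{u,i}}$ sits at each neighbor of $w_{u,i}$ in $S_{u,i}$; invoking lemma \ref{iso_over_gm_orange_green} on each of the orange components of $S_{u,i}$ detaches it from them, leaving it attached only at the former domino-partner of $w_{u,i}$ inside the unimodal component. Proposition \ref{reduction_via_matching} then identifies this variety with $X^\ver_{S_{u,i}}$, $x_{w_{u,i}}$ playing the role of the versal parameter. Finally the constraint $x_v\prod_i x_{w_{u,i}}=-1$ determines $x_v$ from the $x_{w_{u,i}}$ absorbed in the $X^\ver_{S_{u,i}}$, killing the extra $\gm$ factor and leaving the second term $\aff{1}\prod_i X^\ver_{S_{u,i}}\prod_j X_{T_{v,j}}$. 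The third term follows by exchanging the roles of $u$ and $v$.

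The main delicate point is the identification of the $u$-side contribution with $\prod_i X^\ver_{S_{u,i}}$: after detaching $x_{w_{u,i}}$ from the orange components of $S_{u,i}$, it remains placed at a single vertex of the unimodal component which is in general not the canonical red vertex carrying the versal parameter in the definition of $X^\ver_{S_{u,i}}$, and proposition \ref{reduction_via_matching} must be invoked to match the two descriptions and, in parallel, to certify that the constraint $x_v\prod_i x_{w_{u,i}}=-1$ merely eliminates the $\gm$ freed on the $v$-side rather than cutting out a subvariety of $\prod_i X^\ver_{S_{u,i}}$.
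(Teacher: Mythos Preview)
Your argument is correct and follows the paper's exactly: stratify by the vanishing of $x_u$ and $x_v$ using lemma \ref{cover2}, detach coefficients from the orange pieces via lemma \ref{iso_over_gm_orange_green}, and on $\{x_u=0\}$ recognize each $x_{w_{u,i}}$ as the versal parameter on $S_{u,i}$ while the collapsed exchange relation at $u$ eliminates $x_v$. Two small remarks: on $U(x_u)\cap U(x_v)$ you have swapped the coefficients (it is $x_u$, not $x_v$, that sits at the $w_{u,i}$, since $w_{u,i}$ is adjacent to $u$), and your closing worry is slightly overcautious since the domino partner of $w_{u,i}$ is already a red vertex uncovered by the restricted matching of $S_{u,i}$, so the identification with $X^\ver_{S_{u,i}}$ is direct without relocating the parameter.
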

\begin{proof}
  Because the open sets $U(x_u)$ and $U(x_v)$ are a covering by lemma
  \ref{cover2}, one can cut the variety $X_T$ into three pieces:
  either both $x_u$ and $x_v$ are not zero, or exactly one of them is
  zero.

  If both are not zero, then one obtains the product of $\gm^2$ (with
  coordinates $x_u$ and $x_v$) with the product of the varieties
  attached to the $T_{u,i}$ and the $T_{v,j}$. Indeed, one first get
  that $x_u$ becomes a parameter attached to all trees $T_{u,i}$ and
  $x_v$ becomes a parameter attached to all trees $T_{v,j}$. But these
  trees are orange, so $x_u$ and $x_v$ can be detached by lemma
  \ref{iso_over_gm_orange_green}. This gives the first term.

  If $x_u$ is zero and $x_v$ is not zero, then there is a free
  variable $x'_u$ and the variable $x_v$ is determined by the
  variables attached to the vertices of the trees $T_{u,i}$ linked to
  $u$, which must be non-zero. One obtains therefore a versal
  condition on each forest $S_{u,i}$. For the trees $T_{v,j}$, the
  coefficient $x_v$ is attached to all of them, but because they are
  orange it can be detached. This gives the second term.

  The third term is the same after exchanging $u$ and $v$.
\end{proof}


\subsection{Reciprocal property}

Recall from \S \ref{torusaction} that the rank $\rk(T,\phi)$ of the
pair $(T,\phi)$ formed by a tree $T$ and a choice function $\phi$ is
the sum of the dimensions of the $\gen$ red-green components of $T$.

\begin{proposition}
  The polynomial $N^\phi_T(q)$ is divisible par $(q-1)^{\rk(T,\phi)}$.
\end{proposition}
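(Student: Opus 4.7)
The plan is to deduce the statement directly from Corollary \ref{maxifree}, which supplies a free action of the split torus $\Lambda^\phi_T \simeq \gm^{\rk(T,\phi)}$ on $X^\phi_T$. The point is that freeness of an algebraic group action immediately gives a free action on $\fq$-points: for any $x \in X^\phi_T(\fq)$, the stabilizer of $x$ under $\Lambda^\phi_T$ is the scheme-theoretic fiber of the action morphism $\Lambda^\phi_T \times X^\phi_T \to X^\phi_T \times X^\phi_T$ over $(x,x)$, and by freeness this fiber is trivial as a scheme; hence its group of $\fq$-points is trivial as well.

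Consequently the finite abelian group $\Lambda^\phi_T(\fq) = (\fq^\times)^{\rk(T,\phi)}$ of cardinality $(q-1)^{\rk(T,\phi)}$ acts freely on the finite set $X^\phi_T(\fq)$, so every orbit has size exactly $(q-1)^{\rk(T,\phi)}$. Partitioning $X^\phi_T(\fq)$ into orbits gives the divisibility $(q-1)^{\rk(T,\phi)} \mid N^\phi_T(q)$ as an integer, for every prime power $q$.

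To upgrade this numerical divisibility, valid at every prime power, to divisibility of polynomials, I would use polynomial division: write
\begin{equation*}
  N^\phi_T(q) = (q-1)^{\rk(T,\phi)} Q(q) + R(q), \qquad \deg R < \rk(T,\phi).
\end{equation*}
For every prime power $q$ the integer $R(q)$ is divisible by $(q-1)^{\rk(T,\phi)}$, while $|R(q)|$ grows like $q^{\deg R}$ and $(q-1)^{\rk(T,\phi)}$ grows strictly faster; for $q$ sufficiently large this forces $R(q)=0$. Hence $R$ has infinitely many roots and must vanish identically, proving the claimed polynomial divisibility.

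There is no real obstacle in this argument: the substantive geometric content is entirely packaged in Corollary \ref{maxifree}, and the remaining steps—passing from freeness of a scheme action to freeness on $\fq$-points, and lifting divisibility of integer values to divisibility of polynomials via Euclidean division—are standard. The only point that deserves mention is the splitness of $\Lambda^\phi_T$, which is automatic here because the torus is exhibited in Proposition \ref{description_aut} as a subgroup of $\gm^T$ cut out by a linear system of monomial equations with integer exponents, hence is $\fq$-split for every $q$.
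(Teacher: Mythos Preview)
Your proof is correct and follows exactly the same approach as the paper: the paper's proof is the single sentence ``This follows from the existence of the free action obtained in corollary \ref{maxifree}.'' You have simply spelled out the standard details behind that sentence (free action on $\fq$-points, orbit counting, and the passage from numerical to polynomial divisibility).
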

\begin{proof}
  This follows from the existence of the free action obtained in
  corollary \ref{maxifree}.
\end{proof}

Let us refine this slightly.

\begin{proposition}
  The polynomial $N_T^\phi$ can be written as $(q-1)^{\rk(T,\phi)}$ times a
  reciprocal polynomial.
\end{proposition}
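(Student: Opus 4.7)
The plan is to combine the previous proposition (which gives $(q-1)^{\rk(T,\phi)} \mid N^\phi_T(q)$) with a direct inductive verification of the sign-reversal identity
\begin{equation*}
  q^{\dim X^\phi_T}\, N^\phi_T(1/q) \;=\; (-1)^{\rk(T,\phi)}\, N^\phi_T(q).
\end{equation*}
This identity is equivalent to the claimed reciprocity of the quotient, since for $P = (q-1)^r B$ with $\deg B = b$ one computes $q^{r+b} P(1/q) = (-1)^r (q-1)^r \, q^b B(1/q)$, which equals $(-1)^r P$ iff $B$ is reciprocal. So once divisibility is in hand, the whole content of the proposition is this palindromic identity.

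The induction is on the number of vertices. For the base case (a single vertex), the generic polynomial $q-1$ has rank $1$ and satisfies $q \cdot (1/q - 1) = 1 - q = -(q-1)$, while the versal polynomial $q^2 - q + 1$ has rank $0$ and is manifestly palindromic. For the inductive step, I would handle the three cases of the proof of the preceding proposition using the decomposition lemmas \ref{number_generic}, \ref{number_versal} and \ref{number_orange}. In each case, the reversal of the right-hand side is computed using the multiplicativity $(AB)^\ast = A^\ast B^\ast$ together with $(q-1)^\ast = -(q-1)$ and $q^\ast = 1$, the degrees being adjusted by multiplication by a suitable power of $q$ for the lower-degree summands.

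The essential bookkeeping is tracking how the rank and the dimension of $X^\phi_T$ change under each decomposition, so that the inductive hypothesis applies with the right sign. In the generic case, removing a red vertex $v$ decreases the rank by $1$ (lemma \ref{remove_one_red_vertex}) and the dimension by $1$, while removing the edge $u$--$v$ leaves the rank unchanged (lemma \ref{decoupe_dim}) and decreases the dimension by $2$; the two terms then combine as $(-1)^r P_1 + q \cdot (-1)^r R$, matching $(-1)^r N^\phi_T$. In the versal case, all three ranks are equal, all three reversals carry a uniform sign $(-1)^r$, and the same identity falls out. In the orange case the rank is $0$ everywhere, all factors $N_{T_{u,i}}$, $N_{T_{v,j}}$, $N^\ver_{S_{u,i}}$, $N^\ver_{S_{v,j}}$ are palindromic by induction (using that $e_{u,i} = d_{u,i}$ because the unique unimodal summand of $S_{u,i}$ contributes exactly one extra $\alpha$ variable), and the three terms are individually stable under reversal.

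The main obstacle is the degree bookkeeping in the orange case: one must verify that the dimensions of $X^\ver_{S_{u,i}}$ and $X_{T_{u,i}}$ coincide, so that the $q$-prefactors balance the degree gap between the first summand and the other two. Once this identification is in place, the rest of the argument is essentially a formal manipulation of reversed polynomials.
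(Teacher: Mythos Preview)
Your proof is correct and follows essentially the same inductive structure as the paper's own argument: both proceed via the three decomposition lemmas \ref{number_generic}, \ref{number_versal}, \ref{number_orange} and track exactly the same rank changes ($D \to D-1$ and $D \to D$ in the generic case, constant rank in the versal and orange cases). Your only real difference is cosmetic: you package the target as the sign-reversal identity $q^{\dim X^\phi_T} N^\phi_T(1/q) = (-1)^{\rk(T,\phi)} N^\phi_T(q)$ on the full polynomial, whereas the paper factors out $(q-1)^{\rk}$ explicitly and checks reciprocity of the quotient term by term --- but the underlying computation (including your key observation that $\dim X^{\ver}_{S_{u,i}} = \dim X_{T_{u,i}}$ in the orange case) is the same.
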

\begin{proof}
  By induction. This is true for the tree with one vertex.

  One just has to look carefully at the decompositions given in the
  three lemmas that were used to prove polynomiality by induction.

  For lemma \ref{number_generic}, let $D$ be the rank for $T$. Then
  the rank is $D-1$ for $T \setminus \{v\}$ and $D$ for $F$. Using the
  additional factor $q-1$ coming from $\gm$, there is a common factor
  $(q-1)^D$ to all terms involved. The factor $\aff{1}$ in the
  codimension $1$ piece ensures that the reciprocal property holds.

  For lemma \ref{number_versal}, the rank $D$ is the same in all terms
  involved. One uses that $(q-1)^2$ is reciprocal. The factor
  $\aff{1}$ in the codimension $1$ piece ensures that the reciprocal
  property holds.

  For lemma \ref{number_orange}, the rank $D$ is $0$ in all terms
  involved, as there is no generic red-green component. One uses again
  that $(q-1)^2$ is reciprocal. The factor $\aff{1}$ in the
  codimension $1$ pieces ensures that the reciprocal property holds.
\end{proof}

\subsection{Enumeration and coincidences}

In the following remarks, one will describe trees by their numbers in
the tables at the end of \cite{spectra_book} and by their graph6
string (which is a standard format for graphs).

\begin{remark}
  One can find distinct \orange trees with the same enumerating
  polynomial. This happens first for trees with $8$ vertices. The
  trees 2.188 (graph6 \verb!'IhGGOC@?G'!) and 2.189 (graph6
  \verb!'IhC_GCA?G'!) have the same polynomial, as well as the trees
  2.172 (graph6 \verb!'IhGGOCA?G'!) and 2.174 (graph6
  \verb!'IhGH?C@?G'!). The number of different polynomials for \orange
  trees with $2n$ vertices is the sequence
  \begin{equation*}
    1, 1, 2, 5, 13, 41, 138, \dots
  \end{equation*}
  whereas the number of \orange trees is
  \begin{equation*}
    1, 1, 2, 5, 15, 49, 180, \dots
  \end{equation*}
\end{remark}

\begin{remark}
  For unimodal trees with $\ver$ condition, one can also find pairs
  with the same enumerating polynomials. The smallest one is made of
  trees with $9$ vertices, numbered 2.83 (graph6 \verb!'HhCGOCA'!) and
  2.85 (graph6 \verb!'HhGGGG@'!). The number of different polynomials
  for unimodal trees with $2n+1$ vertices is the sequence
  \begin{equation*}
    1, 1, 2, 6, 19, 65, \dots
  \end{equation*}
  whereas the number of unimodal trees is
  \begin{equation*}
    1, 1, 2, 6, 20, 76, 313, 1361, \dots
  \end{equation*}
\end{remark}

\begin{remark}
  For unimodal trees with $\gen$ condition, one can also find pairs
  with the same enumerating polynomials. The smallest one is made of
  the Dynkin diagrams $\TA_7$ and $\TE_7$. The number of different
  polynomials for unimodal trees with $2n+1$ vertices is the sequence
  \begin{equation*}
    1, 1, 2, 5, 13, 46, 168, \dots
  \end{equation*}
\end{remark}




\subsection{Linear trees}
\label{pointsA}

Let us denote by $\TA_n$ the linear tree with $n$ vertices.




\begin{center}
{\scalefont{0.7}
\begin{tikzpicture}[scale=0.7]
\tikzstyle{every node}=[draw,shape=circle,very thick,fill=white]

\draw (0,0) node[fill=orange!20] {1} -- (1,0) node[fill=orange!20] {2} -- (2,0) node[fill=orange!20] {3} -- (3,0) node[fill=orange!20] {4} -- (4,0) node[fill=orange!20] {...} -- (5,0) node[fill=orange!20]{$n$};

\end{tikzpicture}
\quad
\begin{tikzpicture}[scale=0.7]
\tikzstyle{every node}=[draw,shape=circle,very thick,fill=white]

\draw (0,0) node[fill=red!20] {1} -- (1,0) node[fill=green!20]{2} -- (2,0) node[fill=red!20] {3} -- (3,0) node[fill=green!20] {4} -- (4,0) node[fill=red!20] {...} -- (5,0) node[fill=green!20]{...} -- (6,0) node[fill=red!20]{$n$};

\end{tikzpicture}}
\end{center}

One can check that $\TA_n$ is orange if $n$ is even and unimodal if $n$ is odd.

\begin{proposition}
  \label{nbrA}
  The number of points on varieties attached to $\TA_n$ is given by
  \begin{equation}
    \label{nbrApair}
    N_{\TA_n} = \frac{q^{n+2} - 1}{q^2 -1}
  \end{equation}
  if $n$ is even and by
  \begin{equation}
     N_{\TA_n}^\ver =  \frac{q^{n+2} + 1}{q + 1} \quad \text{and} \quad  N_{\TA_n}^\gen = \frac{(q^{(n+1)/2} - 1)(q^{(n+3)/2} - 1)}{q^2 -1}
  \end{equation}
  if $n$ is odd.
\end{proposition}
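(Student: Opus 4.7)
The plan is to establish the three formulas simultaneously by strong induction on $n$, using the decomposition lemmas \ref{number_generic}, \ref{number_versal} and \ref{number_orange} to produce recurrences and then checking algebraically that the stated closed forms satisfy them. The base cases $n = 1$ and $n = 2$ will be handled directly: for $n = 1$ the values $N^{\gen}_{\TA_1} = q - 1$ and $N^{\ver}_{\TA_1} = q^2 - q + 1$ come from the computation at the start of the proof of theorem \ref{main_smooth}; for $n = 2$ lemma \ref{number_orange} applied to the unique domino gives $N_{\TA_2} = (q-1)^2 + 2q = q^2 + 1$, matching $(q^4 - 1)/(q^2 - 1)$.

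For the inductive step one splits into three cases. In the orange case ($n$ even, $n \geq 4$), I apply lemma \ref{number_orange} to the terminal domino $u-v = (n-1, n)$. Since $n$ is a leaf, no subtree hangs off $v$; on the $u$-side one has $T_{u,1} = \TA_{n-2}$ and $S_{u,1} = \TA_{n-3}$. The lemma then yields
\[ N_{\TA_n} = (q-1)^2 N_{\TA_{n-2}} + q N_{\TA_{n-2}} + q N^{\ver}_{\TA_{n-3}} = (q^2 - q + 1) N_{\TA_{n-2}} + q N^{\ver}_{\TA_{n-3}}. \]
In the versal and generic cases ($n$ odd), I choose the red leaf $v = n$ with green neighbor $u = n - 1$ and apply lemmas \ref{number_versal} and \ref{number_generic} respectively. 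A key combinatorial point, which I verify by following the avalanche of orange dominoes along the path, is that removing $v$ from $\TA_n$ triggers a complete avalanche on $\TA_{n-1}$, making that tree entirely orange, while $F = \TA_{n-2}$ stays unimodal with induced choice function equal to the original one. The two lemmas then give
\[ N^{\ver}_{\TA_n} = (q-1)^2 N_{\TA_{n-1}} + q N^{\ver}_{\TA_{n-2}} \quad \text{and} \quad N^{\gen}_{\TA_n} = (q-1) N_{\TA_{n-1}} + q N^{\gen}_{\TA_{n-2}}. \]

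It remains to substitute the proposed formulas into the right-hand sides and check that the claimed left-hand side comes out. Each verification is a short polynomial manipulation: setting $m = (n+1)/2$ in the generic case reduces to the identity
\[ (q^{2m} - 1)(q - 1) + q(q^{m-1} - 1)(q^m - 1) = (q^m - 1)(q^{m+1} - 1), \]
which expands and collapses immediately, and the orange and versal recurrences reduce to identities of the same flavor. The main obstacle is not really one of computation but one of bookkeeping: one must keep careful track of how the red-green component of $\TA_n$ behaves when a red leaf and its green neighbor are deleted, and confirm that the induced choice function on the resulting forest is what I claim. Once those identifications are pinned down, the induction closes without further effort.
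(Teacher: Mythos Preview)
Your proof is correct and follows exactly the route the paper indicates: induction on $n$ via lemmas \ref{number_generic}, \ref{number_versal} and \ref{number_orange}, applied at a terminal leaf/domino. The paper's own proof is a single sentence pointing to these lemmas, and you have simply written out the recurrences and algebraic verifications that it leaves implicit.
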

\begin{proof}
  This follows easily by induction from lemmas \ref{number_generic},
  \ref{number_versal} and \ref{number_orange}. 
\end{proof}

\subsection{Trees of type $\TD$}

Let us denote by $\TD_n$ the tree with $n$ vertices associated with the
Dynkin diagram of type $\TD$.

\begin{center}
{\scalefont{0.7}
\begin{tikzpicture}[scale=0.7]
\tikzstyle{every node}=[draw,shape=circle,very thick,fill=white]

\draw (0.3,0.7) node[fill=red!20] {1} -- (1,0) -- (2,0) node[fill=orange!20] {4} -- (3,0) node[fill=orange!20] {5} -- (4,0) node[fill=orange!20] {...} -- (5,0) node[fill=orange!20]{$n$};

\draw (0.3,-0.7) node[fill=red!20] {2} -- (1,0) node[fill=green!20] {3};

\end{tikzpicture}
\quad
\begin{tikzpicture}[scale=0.7]
\tikzstyle{every node}=[draw,shape=circle,very thick,fill=white]

\draw (0.3,0.7) node[fill=red!20] {1} -- (1,0) -- (2,0) node[fill=red!20] {4} -- (3,0) node[fill=green!20] {5} -- (4,0) node[fill=red!20] {...} -- (5,0) node[fill=green!20]{...} -- (6,0) node[fill=red!20]{$n$};

\draw (0.3,-0.7) node[fill=red!20] {2} -- (1,0) node[fill=green!20] {3};

\end{tikzpicture}}
\end{center}

One can check that $\TD_n$ is unimodal if $n$ is odd and has dimension
$2$ if $n$ is even.

\begin{proposition}
  \label{nbrD}
  The number of points on varieties attached to $\TD_n$ is given by
  \begin{equation}
    \label{nbrDpair}
    N_{\TD_n}^\ver =  \frac{q^{n+3}-q^{n+2}+q^{n}+q^3-q+1}{q + 1} \quad \text{and} \quad  N^\gen_{\TD_n} = (q^{n/2} - 1)^2
  \end{equation}
  if $n$ is even and by
  \begin{equation}
     N_{\TD_n}^\ver =  \frac{q^{n+3}-q^{n+2}+q^{n}-q^3+q-1}{q^2-1} \quad \text{and} \quad  N_{\TD_n}^\gen = q^n-1
  \end{equation}
  if $n$ is odd.
\end{proposition}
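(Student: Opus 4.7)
The plan is to prove both parts by direct computation from Lemmas \ref{number_generic} and \ref{number_versal}, applied to the red leaf $v = 1$ of $\TD_n$ with its green neighbor $u = 3$. With this choice one has $T \setminus \{v\} \cong \TA_{n-1}$ (the chain $2\text{-}3\text{-}\cdots\text{-}n$) and $F = T \setminus \{u,v\} \cong \{2\} \sqcup \TA_{n-3}$ (the isolated red vertex $2$ together with the chain $4\text{-}\cdots\text{-}n$). The parities work out nicely: for $n$ even the coloring of $\TD_n$ is inherited by restriction, so both $\TA_{n-1}$ and $\TA_{n-3}$ remain unimodal; for $n$ odd the orange avalanche triggered by removing $v$ (see Lemma \ref{remove_one_red_vertex}) turns $\{2,3\}$ into an orange domino, so both $\TA_{n-1}$ and $\TA_{n-3}$ are fully orange.

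Next I will read off from Proposition \ref{nbrA} the relevant values of $N^{?}_{\TA_{n-1}}$ and $N^{?}_{\TA_{n-3}}$ (two per parity of $n$), together with $N^{\gen}_{\{2\}} = q-1$ and $N^{\ver}_{\{2\}} = q^2-q+1$ for the isolated red vertex. Applying Lemma \ref{number_generic} then gives
\begin{equation*}
  N^{\gen}_{\TD_n}(q) = (q-1)\, N^{\phi}_{\TA_{n-1}}(q) + q\, N^{\gen}_{\{2\}}(q)\, N^{\phi}_{\TA_{n-3}}(q),
\end{equation*}
and Lemma \ref{number_versal} gives the analogous formula with $(q-1)$ replaced by $(q-1)^2$ and every $\gen$ replaced by $\ver$, where the decoration $\phi$ on the smaller trees is the one induced as in Remark \ref{induced_phi}.

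A small point to verify before substituting is that this induced $\phi$ really agrees with the symbol I am plugging into the $\TA$-formulas. For $n$ even the single red-green component of $\TD_n$ descends (with its $\gen$ or $\ver$ label) to the red-green components of the unimodal $\TA_{n-1}$ and $\TA_{n-3}$, so no relabeling is needed. For $n$ odd the red-green component $\{1,2,3\}$ is absorbed by the avalanche, so only the isolated vertex $\{2\}$ carries the label, and the two $\TA_{k}$ factors are counted by the plain orange formula \eqref{nbrApair}.

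Finally everything reduces to pure algebra. In each of the four cases the right-hand side carries a visible common factor; for instance, in the $\gen$ case with $n$ even one pulls out $(q-1)(q^{n/2}-1)/(q^2-1)$ and uses $q^{(n+2)/2}-1+q(q^{(n-2)/2}-1) = (q+1)(q^{n/2}-1)$ to land on $(q^{n/2}-1)^2$; the three other cases telescope analogously through a cancellation of the form $q^{a}(q+1) - (q+1)$. The main obstacle is nothing deeper than keeping the bookkeeping of the induced coloring straight through the avalanche, together with the case split according to the parity of $n$; once this is done the simplification is immediate in all four cases.
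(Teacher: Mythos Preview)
Your proposal is correct and follows exactly the approach of the paper: the paper's proof consists of the single sentence ``This is easily deduced from the type $\TA$ case, using \ref{number_generic}, \ref{number_versal} applied to a red leaf on a short branch,'' and you have carried out precisely this computation, with the correct bookkeeping of the induced $\phi$ through the avalanche and the correct algebraic simplifications in all four cases.
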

\begin{proof}
  This is easily deduced from the type $\TA$ case, using \ref{number_generic},
  \ref{number_versal} applied to a red leaf on a short branch.
\end{proof}

\subsection{Trees of type $\TE$}

\label{type_E}

Let us consider now a family of trees containing the Dynkin diagrams
of type $\TE$. The tree $\TE_n$ is the tree with one triple point and
branches of size $1$, $2$ and $n-4$.

\begin{center}
{\scalefont{0.7}
\begin{tikzpicture}[scale=0.7]
\tikzstyle{every node}=[draw,shape=circle,very thick,fill=white]

\draw (0,0) node[fill=orange!20] {1} -- (1,0) node[fill=orange!20] {2} -- (2,0) -- (3,0) node[fill=orange!20] {5} -- (4,0) node[fill=orange!20] {6};

\draw (2,1) node[fill=orange!20] {3} -- (2,0) node[fill=orange!20] {4};

\end{tikzpicture}
\quad
\begin{tikzpicture}[scale=0.7]
\tikzstyle{every node}=[draw,shape=circle,very thick,fill=white]

\draw (0,0) node[fill=orange!20] {1} -- (1,0) node[fill=orange!20] {2} -- (2,0) -- (3,0) node[fill=red!20] {5} -- (4,0) node[fill=green!20] {6} -- (5,0) node[fill=red!20]{7};

\draw (2,1) node[fill=red!20] {3} -- (2,0) node[fill=green!20] {4};

\end{tikzpicture}}
\end{center}

One can check that $\TE_n$ is orange if $n$ is even and unimodal if $n$ is odd.

\begin{proposition}
  \label{nbrE}
  The number of points on varieties attached to $\TE_n$ is given by
  \begin{equation}
    \label{nbrEpair}
    N_{\TE_n} = (q^2 - q + 1)\frac{q^{n-1} - 1}{q-1}
  \end{equation}
  if $n$ is even and by $N_{\TE_n}^\ver = (q^2 - q + 1)(1+q^{n-1})  $ and 
  \begin{equation}
       N_{\TE_n}^\gen = \frac{q^{n+1}-q^{n}+q^{n-1}-q^{(n+3)/2}-q^{(n-1)/2}+q^{2}-q+1}{q-1}
  \end{equation}
  if $n$ is odd.
\end{proposition}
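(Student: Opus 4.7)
The plan is to argue by induction on $n$, combining Proposition~\ref{nbrA} (for type $\TA$), Proposition~\ref{nbrD} (which supplies the base case via $\TE_5 = \TD_5$), and the three decomposition lemmas~\ref{number_generic}, \ref{number_versal}, \ref{number_orange}. The even case reduces directly to type $\TA$ by one application of Lemma~\ref{number_orange}; the odd case is handled by induction using the even case together with $\TE_{n-2}$ as the inductive step.

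For even $n \geq 6$, the tree $\TE_n$ is orange and its unique perfect matching necessarily contains the edge $u$--$v$ joining the triple point $u$ to the single-vertex leaf $v$, since $v$ has $u$ as its only neighbor. Applying Lemma~\ref{number_orange} to this domino, one gets $T\setminus\{u,v\} = \TA_2 \sqcup \TA_{n-4}$, and the auxiliary $\ver$-forest on the $u$-side is $\TA_1 \sqcup \TA_{n-5}$ (removing from each subtree the vertex adjacent to $u$). Since $v$ is a leaf there is no $v$-side contribution, so the first and third terms of the lemma combine with coefficient $(q-1)^2 + q = q^2 - q + 1$, giving
\begin{equation*}
N_{\TE_n}(q) = (q^2 - q + 1)\, N_{\TA_2}(q)\, N_{\TA_{n-4}}(q) + q\, N^\ver_{\TA_1}(q)\, N^\ver_{\TA_{n-5}}(q).
\end{equation*}
Substituting from Proposition~\ref{nbrA} and reducing over the common denominator $q^2 - 1$, the numerator simplifies to $(q+1)(q^{n-1}-1)$, yielding the claimed closed form.

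For odd $n \geq 7$, the tree $\TE_n$ is unimodal and the leaf $v$ at the far end of the long branch is red, with an interior green neighbor $u$. Applying Lemma~\ref{number_generic} (respectively Lemma~\ref{number_versal}) to this $v$ yields the recurrences
\begin{equation*}
N^\gen_{\TE_n}(q) = (q-1)\, N_{\TE_{n-1}}(q) + q\, N^\gen_{\TE_{n-2}}(q),
\end{equation*}
\begin{equation*}
N^\ver_{\TE_n}(q) = (q-1)^2\, N_{\TE_{n-1}}(q) + q\, N^\ver_{\TE_{n-2}}(q),
\end{equation*}
since $T\setminus\{v\} \cong \TE_{n-1}$ is orange (handled by the even case) and $T\setminus\{u,v\} \cong \TE_{n-2}$ is unimodal (inductive hypothesis, using $\TE_5 = \TD_5$ as base case). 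One then verifies by substitution that the proposed closed forms satisfy these recurrences.

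The main obstacle is purely algebraic: the simplifications, especially in the odd generic case where the formula involves middle exponents $(n\pm 1)/2$, $(n+3)/2$ over a denominator $q-1$, require careful bookkeeping. No new geometric ideas are needed beyond the three decomposition lemmas already used for types $\TA$ and $\TD$.
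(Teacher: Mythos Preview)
Your proof is correct, but in the odd case you take a different route from the paper. The paper applies Lemmas~\ref{number_generic} and~\ref{number_versal} to the red leaf on the \emph{short} branch (vertex~$3$, adjacent to the triple point). Removing this leaf collapses $\TE_n$ to the linear tree $\TA_{n-1}$, and removing both it and the triple point yields the forest $\TA_2 \sqcup \TA_{n-4}$; thus the odd case reduces \emph{directly} to the type~$\TA$ formulas of Proposition~\ref{nbrA}, with no induction on $n$ and no appeal to type~$\TD$. Your approach instead peels off the red leaf at the far end of the long branch, which reduces $\TE_n$ to $\TE_{n-1}$ and $\TE_{n-2}$ and hence requires an induction with base case $\TE_5 = \TD_5$ imported from Proposition~\ref{nbrD}. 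Both are valid; the paper's version is slightly more economical (no inductive hypothesis, fewer dependencies), while yours has the mild advantage that the recurrence
\[
N^{\bullet}_{\TE_n} = (q-1)^{\epsilon}\, N_{\TE_{n-1}} + q\, N^{\bullet}_{\TE_{n-2}}
\]
makes the passage from even to odd uniform and the algebraic verification straightforward. Your even case coincides exactly with the paper's.
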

\begin{proof}
  In the even case, one uses lemma \ref{number_orange} applied to the
  domino on the short branch, and the known type $\TA$ cases. In the
  odd case, one uses lemmas \ref{number_versal} and
  \ref{number_generic} applied to the red leaf on the short branch,
  and the known type $\TA$ cases.
\end{proof}


\subsection{Orange trees and unimodal trees}

Let us now describe a recursion involving only the polynomials for
orange trees and versal unimodal trees.

Let $T$ be an orange tree and $v$ be a leaf of $T$. Let $T'$ be the
unimodal tree $T \setminus \{v\}$ and let $F$ be the orange forest
obtained from $T$ by removing the domino $u - v$ containing $v$.

\begin{lemma}
  \label{pour_algo_orange_unimodal}
  There is a decomposition
  \begin{equation}
    X_T = X^{\ver}_{T'} \sqcup \aff{1} X_F.
  \end{equation}
\end{lemma}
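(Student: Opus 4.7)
The plan is to mimic the strategy used for Lemmas~\ref{number_generic}, \ref{number_versal} and \ref{number_orange}, decomposing $X_T$ according to whether the cluster variable $x_v$ is zero or invertible. The only non-trivial ingredients are Lemma~\ref{dimension1}, to read off the coloring of $T'$, and Lemma~\ref{iso_over_gm_orange_green}, to absorb the inessential coefficient appearing in the closed piece.

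On the open set $U(x_v)$, one uses that $v$ is an orange leaf of the orange tree $T$, so no coefficient is attached to $v$ and the exchange relation at $v$ reads $x_v x'_v = 1 + x_u$. Solving for $x'_v$ eliminates both $x'_v$ and that relation, and leaves $x_v \in \gm$ appearing in the remaining equations only as a coefficient attached to $u$ in the tree $T' = T \setminus \{v\}$. By Lemma~\ref{dimension1}, $T'$ is unimodal with a single red-green component in which $u$ is a red vertex, and the restriction of $M$ obtained by dropping the domino $u-v$ is a maximum matching of $T'$ not covering $u$. Thus $x_v$ plays exactly the role of the (unique) versal parameter attached to $u$ in the definition of $X^{\ver}_{T'}$, giving $U(x_v) \simeq X^{\ver}_{T'}$.

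On the complement $\{x_v = 0\}$, the same relation forces $x_u = -1$ while letting $x'_v$ range freely over $\aff{1}$; substituting these into the exchange relation at $u$ yields in addition $x'_u = -1$, so both variables at $u$ are fixed constants. The remaining equations are the exchange relations on the forest $F = T \setminus \{u,v\}$, carrying a constant coefficient $-1$ (coming from $x_u = -1$) at the former neighbor of $u$ in each connected component of $F$. Since $F$ is orange, these attachment points are orange vertices, and by Lemma~\ref{iso_over_gm_orange_green}, applied component by component (or rather by the same sequence of monic Laurent jumps used in its proof, specialized to the value $-1$), these constants can be absorbed, so that the closed piece is isomorphic to $\aff{1} \times X_F$. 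Assembling the two pieces gives the required decomposition. The step requiring the most care is the open-piece identification: one must confirm via Lemma~\ref{dimension1} that $u$ is a red vertex of $T'$ and that the above restricted matching is maximum, so that $x_v$ is indeed the single versal parameter for $X^{\ver}_{T'}$ and no spurious extra factor of $\gm$ is introduced.
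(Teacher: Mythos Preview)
Your proof is correct and follows essentially the same approach as the paper: decompose according to whether $x_v$ vanishes, identify the closed piece with $\aff{1}\times X_F$ after detaching the constant $-1$ via Lemma~\ref{iso_over_gm_orange_green}, and identify the open piece with $X^{\ver}_{T'}$. The only cosmetic difference is that for $U(x_v)$ the paper simply invokes Lemma~\ref{smooth_orange} (specialized to the case where $v$ is a leaf), whereas you unpack that argument directly using Lemma~\ref{dimension1} and the restricted matching; the content is the same.
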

\begin{proof}
  This decomposition is made according to the value of $x_v$.

  If $x_v = 0$, then one has a free parameter $x'_v$, which gives the
  factor $\aff{1}$. One also has $x_u = -1$ and one can get rid of
  $x'_u$. The value $-1$ is attached as a coefficient to some orange
  vertices of $F$, but one can detach this coefficient by lemma
  \ref{iso_over_gm_orange_green}. There remains the equations for $X_F$.

  If $x_v \not = 0$, one can use lemma \ref{smooth_orange}. In the
  special case of a leaf, this gives an isomorphism with
  $X^\ver_{T'}$.
\end{proof}

One can use lemma \ref{pour_algo_orange_unimodal} to compute the
enumerating polynomials for orange trees and versal unimodal trees
only, by the following algorithm.

\textbf{Step} 0: if the tree $T$ is of type $\TA_n$ with $n$ even, use
the known value from \eqref{nbrApair} in proposition \ref{nbrA}.

\textbf{Step} 1: if the tree $T$ is orange, find a leaf $v$ whose
branch has minimal length. Here the branch is the longest sequence of
vertices of valency $2$ starting at the unique neighbor of the leaf
(it could be empty). Then use lemma \ref{pour_algo_orange_unimodal}
applied to the leaf $v$ to compute $N_T$.

\textbf{Step} 2: if the tree $T$ is unimodal, find a red leaf $w$
whose branch has maximal length. Adding a vertex $v$ at the end of
this branch gives an orange tree $T'$. Then use lemma
\ref{pour_algo_orange_unimodal} (backwards) applied to the tree $T'$
and its leaf $v$ to compute $N_T$.

This will work because each step either shorten the shortest branch or
add some vertex to the longest branch. This makes sure that the tree
become more and more linear, and that at some point one is reduced to
the initial step. This is a decreasing induction on the number of
points of valency at least $3$ and the length of the longest branch.


\begin{remark}
  For orange trees, one can use instead in this algorithm the
  lemma \ref{number_orange}, maybe choosing a domino close to the
  center of the tree for a better complexity.
\end{remark}


\subsection{Euler characteristic and independent sets}

Let us denote by $\operatorname{vc}(T)$ the number of minimum vertex covers of
$T$. This is also the number of maximum independent sets.

Let us now describe a decomposition of the versal varieties according
to independent sets (not necessarily maximal).

If $S$ is a subset of the vertices of $T$, one can define $W_T(S)$ as
the set of points in $X_T^\ver$ where
\begin{align}
  x_u = 0 & \quad\text{if } u\in S,\\
  x_u \not= 0 & \quad\text{if } u\not\in S.
\end{align}

The sets $W_T(S)$ are obviously disjoint in $X_T^\ver$.

\begin{lemma}
  If the set $W_T(S)$ is not empty, then $S$ is an independent set in $T$.
\end{lemma}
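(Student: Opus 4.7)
The plan is to prove the contrapositive: if $S$ is not an independent set, then $W_T(S)$ is empty. So suppose $S$ contains two adjacent vertices $u$ and $v$, and suppose for contradiction that there is a point in $W_T(S)$.

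At such a point, $x_u = 0$ and $x_v = 0$ by the defining conditions of $W_T(S)$. Consider the exchange relation \eqref{exchange_rel} at the vertex $u$:
\begin{equation*}
x_u x'_u = 1 + \alpha_u \prod_{u - j} x_j.
\end{equation*}
The left-hand side is $0 \cdot x'_u = 0$. On the right-hand side, the product runs over neighbors $j$ of $u$, and since $v$ is one of these neighbors, the factor $x_v = 0$ appears, so the product vanishes. Hence the right-hand side equals $1$. This yields $0 = 1$, a contradiction.

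The only subtlety is ensuring this argument is valid on $X_T^{\ver}$ with the convention that the coefficient variables $\alpha_i$ are invertible — but invertibility of $\alpha_u$ is not needed here; all that matters is that the right-hand side reduces to the constant $1$ when some neighbor's cluster variable is zero. Since the argument uses only the exchange relation at $u$, it applies identically to any variety $X_T^\phi$ (regardless of the choice function $\phi$), and there is no genuine obstacle.
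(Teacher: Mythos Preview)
Your proof is correct and is essentially the same as the paper's: the paper simply invokes Lemma~\ref{cover2}, whose proof is precisely the exchange-relation argument you wrote out explicitly (if $x_u=x_v=0$ for adjacent $u,v$, the relation at $u$ gives $0=1$). You have just unpacked the citation.
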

\begin{proof}
  This follows from lemma \ref{cover2}.
\end{proof}

\begin{proposition}
  \label{decoupe_independante}
  Let $S$ be an independent set in $T$. There is an isomorphism
  \begin{equation*}
    W_T(S) \simeq (\gm)^{t + \dim(T)- 2s} \times (\aff{1})^{s},
  \end{equation*}
  where $t$ is the size of $T$ and $s$ the size of $S$.
\end{proposition}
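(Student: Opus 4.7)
The plan is to parameterize $W_T(S)$ explicitly by disentangling which among the coordinates $x_u$, $x'_u$, $\alpha_u$ of $X_T^\ver$ are free on $W_T(S)$ and which are either eliminated by the exchange relations or subject to residual constraints. I would begin by fixing a maximum matching $M$ used to define $X_T^\ver$ and splitting $S = S_c \sqcup S_u$ according to whether vertices of $S$ are covered by a domino of $M$, writing $s_c = |S_c|$ and $s_u = |S_u|$. Recall that the uncovered vertices of $T$ (the ones carrying a coefficient coordinate $\alpha_u$) are exactly the $\dim(T)$ red vertices unmatched by $M$.

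For each $u \in S$, the condition $x_u = 0$ makes $x'_u$ completely free (contributing one factor $\aff{1}$) and reduces the exchange relation at $u$ to $\alpha_u \prod_{j \sim u} x_j = -1$. Since $S$ is independent, every neighbor $j$ of $u$ lies outside $S$, so $\prod_{j \sim u} x_j \in \gm$. If $u \in S_u$, this equation uniquely determines $\alpha_u \in \gm$ and removes it from the free coordinates; if $u \in S_c$, there is no $\alpha_u$ variable and one is left with a constraint $\prod_{j \sim u} x_j = -1$ on the remaining $x_j$'s. For $u \notin S$, the non-vanishing of $x_u$ lets the exchange relation at $u$ determine $x'_u$, which therefore disappears from the free coordinates.

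The technical core is eliminating the $s_c$ residual constraints by pivoting on matched partners. Write $p(u)$ for the $M$-partner of $u \in S_c$; independence of $S$ forces $p(u) \notin S$, and $u \mapsto p(u)$ is an injection into $T \setminus S$. I would consider the bipartite graph $G$ on $S_c \sqcup p(S_c)$ with edges inherited from $T$: as a subgraph of the tree $T$ it is a forest, and each of its connected components has equal-size halves (since each matched pair lies inside a single component). A degree-sum argument shows that any nonempty bipartite tree with equal-size parts has a leaf in each part, hence in particular $G$ has a leaf lying in $S_c$. Its constraint involves exactly one variable indexed by $p(S_c)$, namely $x_{p(u)}$, and so can be solved as a Laurent monomial in the remaining coordinates. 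Removing $\{u,p(u)\}$ from $G$ leaves a smaller instance of the same setup, and iteration expresses each $x_{p(u)}$ for $u \in S_c$ as a Laurent monomial in the other coordinates.

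After these reductions, the free coordinates are: $x_u \in \gm$ for $u \in T \setminus (S \cup p(S_c))$, contributing $t - s - s_c$ factors of $\gm$; $\alpha_u \in \gm$ for the $\dim(T) - s_u$ uncovered vertices outside $S$; and $x'_u \in \aff{1}$ for each of the $s$ vertices of $S$. The total $\gm$-exponent is $(t - s - s_c) + (\dim(T) - s_u) = t + \dim(T) - 2s$, matching the statement. The main obstacle is the elimination argument of the third paragraph: one must verify both the leaf-in-$S_c$ property of $G$ and that it is preserved under removing a pivot pair. Once this is in place, the forward map reading off the free coordinates and the backward map reconstructing a point of $W_T(S)$ from them are visibly mutually inverse Laurent-polynomial morphisms, yielding the claimed isomorphism of schemes.
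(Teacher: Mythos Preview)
Your argument is correct and follows essentially the same strategy as the paper: fix a maximum matching $M$, use $x_u\neq 0$ to eliminate $x'_u$ for $u\notin S$, observe that the $x'_v$ for $v\in S$ are free affine coordinates, and then show that the $s$ residual equations $-1=\alpha_i\prod_{j\sim i}x_j$ form a triangular system that eliminates exactly $s$ of the remaining $t-s+\dim(T)$ torus variables by pivoting on matched partners.

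The only notable difference is bookkeeping for the elimination step. You split $S=S_c\sqcup S_u$, dispose of $S_u$ by solving for the attached $\alpha$-variables, and then treat $S_c$ via a leaf-extraction argument on the bipartite forest $G$ on $S_c\sqcup p(S_c)$. The paper instead enlarges $T$ to a tree $\widehat{T}$ carrying a perfect matching $\widehat{M}$ (by adding a pendant vertex $Z_i$ for each coefficient $\alpha_i$), orients the edges meeting $S$ to produce a partial order on $\widehat{T}$, and eliminates the partner variable of each $i\in S$ along this order. The paper's device unifies your two cases in a single sweep (your $S_u$-step becomes the elimination of $y_{Z_i}=\alpha_i$), while your version makes the pivot choice more explicit; neither approach gains or loses anything substantive over the other.
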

\begin{proof}
  Let us fix a maximum matching $M$ of $T$.

  For every $u$ not in $S$, one can use the hypothesis $x_u \not= 0$
  to get rid of $x'_u$ and of the equation of index $u$. There remains
  only the equations of index $v$ for $v \in S$. Because $x_v=0$ when
  $v \in S$, the variables $x'_v$ for $v \in S$ do no longer appear in
  the equations, hence they are free. This gives the factor
  $(\aff{1})^s$.

  Then there remains $s$ equations of the general shape
  \begin{equation}
    \tag{$E_i$}
    \label{equi}
    -1 = \alpha_i \prod_{j - i} x_j,
  \end{equation}
  involving the $t-s$ invertible variables $x_u$ and the $\dim(T)$
  coefficient variables $\alpha_i$. The factor $\alpha_i$ is present
  in this equation only if the vertex $i$ is not covered by the chosen
  maximum matching $M$.

  One will use the following auxiliary graph $\widehat{T}$. The
  vertices are the vertices of $T$ and new vertices $Z_i$ indexed by
  coefficient variables $\alpha_i$ for $i \not\in M$. The edges of
  $\widehat{T}$ are edges of $T$ and new edges between the vertex
  $Z_i$ and the vertex $i$ for every $i \not \in M$. Clearly, this
  graph is still a tree and admits a perfect matching $\widehat{M}$, by
  adding dominoes $i-Z_i$ to the matching $M$.

  Because $S$ is an independent set in $T$, there is at most one
  element of $S$ in every edge of $\widehat{T}$. Let us orient every
  edge containing an element of $S$ towards this element if the edge
  is a domino and in the other way otherwise. This
  defines a partial order on the vertices of $\widehat{T}$, decreasing
  along the chosen orientation of edges.

  Consider now the equation \ref{equi} associated with a vertex $i \in
  S$. There is a unique domino $i-j$ in $\widehat{T}$ containing
  $i$. The equation can then be used to express the variable $x_j$ in
  terms of variables of lower index in the partial order.

  One can therefore eliminate one variable for every equation.  At the
  end, one obtains an algebraic torus whose dimension is the
  difference between the number $ t -s +\dim(T)$ of initial variables
  and the number $s$ of equations.






\end{proof}

\begin{corollary}
  The Euler characteristic of $X_T^\ver$ is $\operatorname{vc}(T)$.
\end{corollary}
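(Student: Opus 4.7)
The plan is to combine Proposition~\ref{decoupe_independante} with the standard additivity of the (compactly supported) Euler characteristic on stratifications into locally closed pieces, using $\chi(\gm)=0$ and $\chi(\aff{1})=1$. Since $X_T^\ver$ decomposes as the disjoint union of the $W_T(S)$ over independent sets $S$ of $T$, we get
\begin{equation*}
\chi(X_T^\ver) \;=\; \sum_{S \text{ independent}} \chi(W_T(S)) \;=\; \sum_{S} 0^{\,t+\dim(T)-2s}\cdot 1^{s},
\end{equation*}
where $s=\#S$ and $t=\#T$. Thus only those independent sets $S$ for which the exponent of $\gm$ vanishes, i.e.\ $s=(t+\dim(T))/2$, contribute, and each such set contributes exactly $1$.

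The next step is to identify the value $(t+\dim(T))/2$ with the size of a maximum independent set of $T$. Writing $t=r(T)+o(T)+g(T)$ and $\dim(T)=r(T)-g(T)$ gives $(t+\dim(T))/2 = r(T)+o(T)/2$. On the other hand, by Lemma~\ref{dim_is_not_covered} (together with Proposition~\ref{precise_matching}) a maximum matching of $T$ has $g(T)+o(T)/2$ dominoes, so the minimum vertex cover has that size and the complementary maximum independent set has size $t-g(T)-o(T)/2 = r(T)+o(T)/2$. Hence the condition $s=(t+\dim(T))/2$ cuts out exactly the \emph{maximum} independent sets (note also that no independent set can be larger, so the exponent is always nonnegative, which is reassuring).

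Combining these two steps, $\chi(X_T^\ver)$ equals the number of maximum independent sets of $T$, which by the remark following the color definitions is the same as the number $\operatorname{vc}(T)$ of minimum vertex covers. The only mild subtlety is justifying the additivity of $\chi$ on the stratification: since each stratum $W_T(S)$ is locally closed in $X_T^\ver$, this follows from the standard additivity of the Euler characteristic with compact support, which agrees with the topological Euler characteristic here because each $W_T(S)$ is a product of copies of $\gm$ and $\aff{1}$ (spaces with torsion-free, finitely generated cohomology). I do not foresee a real obstacle; the only point that requires a little care is the bookkeeping relating $\dim(T)$ and $o(T),r(T),g(T)$ to the size of a maximum independent set.
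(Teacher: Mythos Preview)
Your proposal is correct and follows essentially the same route as the paper: decompose $X_T^\ver$ into the pieces $W_T(S)$ via Proposition~\ref{decoupe_independante}, observe that only strata with $t+\dim(T)-2s=0$ contribute (each by $1$), and identify $s=r(T)+o(T)/2$ as the size of a maximum independent set. Your write-up is slightly more detailed on the justification of additivity and on why this size equals the maximum independent set size, but the argument is the same.
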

\begin{proof}
  Every set $W_T(S)$ contributes either $0$ or $1$ to the Euler
  characteristic. It contributes by $1$ if and only if the exponent $t
  + \dim(T)- 2s$ is zero.

  This exponent can be expressed as
  \begin{equation*}
    (r(T) + o(T) + g(T)) + (r(T) - g(T)) - 2 s.
  \end{equation*}
  It is therefore zero if and only if $s = r(T) + o(T)/2$, which is the
  size of the maximum independent sets in $T$.
\end{proof}

Of course, one can also use Proposition \ref{decoupe_independante} to
give a formula for the number of points $N^\ver_T$ as a sum over
independent sets.

\begin{corollary}
  The value at $q=1$ of the polynomial $N^\ver_T$ is the number $\operatorname{vc}(T)$
  of maximum independent sets of $T$.
\end{corollary}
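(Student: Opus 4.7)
The plan is to mimic the argument of the previous corollary, but at the level of the point-counting polynomial rather than the Euler characteristic. Proposition~\ref{decoupe_independante} gives a decomposition of $X_T^\ver$ as a disjoint union of the locally closed pieces $W_T(S)$ indexed by the independent sets $S$ of $T$, with
\begin{equation*}
  W_T(S) \simeq \gm^{t + \dim(T)- 2s} \times \aff{1}^{s},
\end{equation*}
where $t$ is the number of vertices of $T$ and $s = \# S$. Since this decomposition is a stratification by locally closed subvarieties, counting points over $\fq$ is additive on the pieces, and I obtain the explicit formula
\begin{equation*}
  N^\ver_T(q) \;=\; \sum_{S \text{ independent}} (q-1)^{t + \dim(T) - 2s} \, q^{s}.
\end{equation*}

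Setting $q = 1$ kills every term with $t + \dim(T) - 2s > 0$, because $(q-1)^k$ vanishes there, while the factor $q^s$ becomes $1$. Hence only those independent sets $S$ with $s = (t + \dim(T))/2$ contribute, each contributing exactly $1$.

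It remains to identify this extremal size with the maximum independent set size. Expanding using $t = r(T) + o(T) + g(T)$ and $\dim(T) = r(T) - g(T)$, I get
\begin{equation*}
  \frac{t + \dim(T)}{2} \;=\; r(T) + \frac{o(T)}{2},
\end{equation*}
which is precisely the cardinality of a maximum independent set of $T$, as used in the proof of the previous corollary. Therefore $N^\ver_T(1)$ is the number of maximum independent sets, which is $\operatorname{vc}(T)$.

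The only nontrivial ingredient is the cardinality identity above, and this is exactly the computation already performed just before the statement; no further obstacle is expected, as the step from Euler characteristic to $q=1$ evaluation merely replaces $\chi(\gm)=0$, $\chi(\aff 1)=1$ by their numerical counterparts $(q-1)\big|_{q=1}=0$ and $q\big|_{q=1}=1$.
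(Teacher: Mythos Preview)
Your proof is correct and follows exactly the approach the paper intends: the sentence just before the corollary (``Of course, one can also use Proposition~\ref{decoupe_independante} to give a formula for the number of points $N^\ver_T$ as a sum over independent sets'') is precisely the formula you wrote down, and evaluating it at $q=1$ reduces to the same size computation as in the preceding corollary on the Euler characteristic.
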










\section{Cohomology: general setting and results}

This section first describes some differential forms that are always
present in the varieties under study, and then very briefly recalls the
results one needs about (mixed) Hodge structures. For a general
reference about mixed Hodge structures, see for example
\cite{mixedhodge}.

\subsection{Weil-Petersson two-form}

Let $T$ be a tree and let $S$ be a subset of $T$. Consider the
augmented tree $T+S$ obtained by adding a new edge out of every vertex
in $S$, and endow this tree with a bipartite orientation, where every
vertex is either a sink or a source.

As a variant of the definition of the variety $X_T^\phi$, one can
define a variety $X(T+S)$ attached to this data, with invertible
variables associated to the new vertices, playing the role of
coefficients in the equations (as the $\alpha$ do).

Let $\omega_i$ denote $d \log(x_i)$. The following lemma has been
proved by Greg Muller in \cite{mullerWP} in a more general context.
\begin{lemma}
  The differential form
  \begin{equation}
    \label{def_wp}
    \WP = \sum_{i \to j} \omega_i \omega_j,
  \end{equation}
  where the sum is running over edges of $T+S$, is an algebraic
  differential form on the variety $X(T+S)$.
\end{lemma}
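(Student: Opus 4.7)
My plan is to check pointwise that the rational form $\WP$ has no poles on $X(T+S)$. Since the added-vertex variables are invertible, only the logarithmic forms $\omega_i$ with $i\in T$ can be singular, and only along the loci $\{x_i=0\}$. Fix a point $p\in X(T+S)$ and let $Z=\{i\in T:x_i(p)=0\}$. The exchange relation $x_ix_i'=1+\prod_{j\text{ nbr of }i}x_j$ forces the product on the right to equal $-1$ whenever $x_i$ vanishes, so no neighbor of $i$ lies in $Z$; in other words $Z$ is an independent set of $T+S$ (compare Lemma~\ref{cover2}).

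Next I group, for each $i\in Z$, the terms of $\WP$ in which $\omega_i$ appears. Because $T+S$ is endowed with a bipartite orientation, every edge at $i$ is oriented in the same direction relative to $i$, so this contribution equals $\epsilon_i\,\omega_i\wedge A_i$, with $\epsilon_i=\pm 1$ and $A_i=\sum_{j\text{ nbr of }i}\omega_j$. The crucial algebraic identity is
\begin{equation*}
\omega_i\wedge A_i \;=\; \frac{dx_i\wedge dx_i'}{x_ix_i'-1},
\end{equation*}
obtained from $A_i=d\log\bigl(\prod_{j\text{ nbr of }i}x_j\bigr)=d\log(x_ix_i'-1)$ and a short wedge-product calculation in which the $x_i'\,dx_i$ term is killed by $dx_i\wedge dx_i=0$. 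At the point $p$ the denominator equals $-1\neq 0$, so the right-hand side is regular at $p$.

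Since $Z$ is independent, the sub-sums $\epsilon_i\,\omega_i\wedge A_i$ for distinct $i\in Z$ collect pairwise disjoint bundles of edges of $T+S$, and together they absorb every term of $\WP$ involving some $\omega_i$ with $i\in Z$. The remaining terms involve only $\omega_k$ for $k\notin Z$ and are therefore regular at $p$. Combining with the identity above, every piece is regular at $p$, which gives regularity of $\WP$ at $p$ and, since $p$ was arbitrary, on the whole of $X(T+S)$. The only real obstacle is the identity $\omega_i\wedge A_i=dx_i\wedge dx_i'/(x_ix_i'-1)$, which is exactly what trades the a priori pole of $\omega_i$ along $\{x_i=0\}$ against the vanishing of $1+\prod_{j\text{ nbr of }i}x_j$ forced on the same locus by the exchange relation; everything else is bookkeeping enabled by the bipartite orientation and the independence of $Z$.
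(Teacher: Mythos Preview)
Your proof is correct and rests on the same key identity as the paper's: the sum of all terms of $\WP$ incident to a fixed vertex $i$ equals $dx_i\wedge dx'_i/(x_ix'_i-1)$, which on the variety is $dx'_i\wedge dx_i/\prod_{k\leftrightarrow i}x_k$ and is visibly regular where $x_i=0$. The paper presents this as a divisor-by-divisor argument (fixing one $i$ and checking there is no pole along $\{x_i=0\}$), whereas you work pointwise, introducing the independent set $Z=\{i:x_i(p)=0\}$ and using disjointness of the edge-bundles around distinct $i\in Z$ to handle simultaneous vanishing; this is a slightly more explicit packaging of the same computation rather than a different route.
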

\begin{proof}
  Let us prove that it has no pole.

  Let us fix $i$. To study the possible pole along $x_i = 0$, it is enough
  to look at the sum $\sum_{j \leftrightarrow i} \omega_i \omega_j$
  restricted to edges containing $i$.

  By the relation $x_i x'_i = 1 + \prod_{j\leftrightarrow i} x_j$, one has
  \begin{equation}
    x_i d x'_i + x'_i d x_i = \sum_{j\leftrightarrow i} \left(\prod_{{k \not= j}\atop{k\leftrightarrow i}} x_k\right) d x_j,
  \end{equation}
  and therefore
  \begin{equation}
    x_i dx'_i dx_i = \sum_{j\leftrightarrow i} \left(\prod_{{k \not= j}\atop{k\leftrightarrow i}} x_k\right) d x_j d x_i.
  \end{equation}
  This implies
  \begin{equation}
     dx'_i dx_i /  \prod_{k\leftrightarrow i} x_k = \sum_{j\leftrightarrow i}  \omega_j \omega_i,
  \end{equation}
  where the left-hand side has clearly no pole at $x_i$.
\end{proof}

Note that $\WP$ stands here for Weil-Petersson.

Abusing notations, one will use the same symbol $\WP$ to denote
these differential forms on different varieties. The ambient variety
should be clear from the context.

\subsection{Hodge structures}

We will use the notation $\QQ(-i)$ to denote a one dimensional vector
space over $\QQ$ endowed with a pure Hodge structure of Tate type, of
weight $2i$ and type $(i,i)$. The tensor product of $\QQ(-i)$ and
$\QQ(-j)$ is $\QQ(-i-j)$.

Recall that the cohomology of $\gm$ has an Hodge structure described by
\begin{equation}
  \HH^k(\gm) = \QQ(-k)
\end{equation}
for $0 \leq k \leq 1$.

There is no morphism between pure Hodge structures of distinct
weights. The Künneth isomorphism is compatible with the Hodge
structures. The Mayer-Vietoris long exact sequence is an exact
sequence of Hodge structures.

\section{Cohomology: orange and versal cases}

This section deals with the cohomology, in several cases where either
varieties do not depend on parameters, or versal conditions are
assumed on all parameters. The first part is devoted to linear trees;
the results there can then be used as building blocks.

\subsection{Linear trees $\TA$}

Let $\TA_n$ be the linear tree with $n$ vertices numbered from $1$ to
$n$. As seen in \S \ref{pointsA}, this is an orange tree if $n$ is
even, and an unimodal tree otherwise. Some of the results of this
section were already obtained in \cite{nombre_points} using instead
the cohomology with compact supports.

\subsubsection{Cohomology of some auxiliary varieties for $\TA$}

\label{auxi_section_A}

Let us introduce three varieties $X_n$, $Y_n$ and $Z_n$ with
dimensions $n, n+1$ and $n+1$.

The variety $Z_n$ is defined by variables $x_1,\dots,x_n$,
$x'_1,\dots,x'_n$ and $\alpha$ such that
\begin{align}
  x_1 x'_1 &= 1 + \alpha x_2 ,\\
  x_i x'_i &= 1 + x_{i-1} x_{i+1}, \\
  x_n x'_n &= 1 + x_{n-1}.
\end{align}

The variety $Y_n$ is the open set in $Z_n$ where $\alpha$ is invertible.

The variety $X_n$ is the closed set in $Y_n$ where $\alpha$ is fixed
to a generic invertible value (where generic means distinct from
$(-1)^{(n+1)/2}$ if $n$ is odd).

In our general notations, $Y_n$ is $X_{\TA_n}^\ver$ and $X_n$ is $X_{\TA_n}^\gen$.

Let us first describe the variety $Z_n$.

\begin{proposition}
  There exists an isomorphism between $Z_n$ and the affine space $\aff{n+1}$.
\end{proposition}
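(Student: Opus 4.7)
The plan is by induction on $n$, proving the stronger statement that $Z_n \simeq \aff{1} \times Z_{n-1}$ for $n \geq 2$. Combined with the base case $Z_1 \simeq \aff{2}$ (immediate, by eliminating $\alpha = x_1 x'_1 - 1$ to obtain free coordinates $x_1, x'_1$), iteration yields $Z_n \simeq \aff{n+1}$.

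For the inductive step I would exhibit mutually inverse morphisms between $\aff{1} \times Z_{n-1}$ and $Z_n$. Write the coordinate on $\aff{1}$ as $t$ and the coordinates on $Z_{n-1}$ as $y_1, \ldots, y_{n-1}, y'_1, \ldots, y'_{n-1}, \alpha_{\mathrm{old}}$. The forward morphism $\aff{1} \times Z_{n-1} \to Z_n$ is defined by
\begin{equation*}
x_i = y_{i-1},\quad x'_i = y'_{i-1} \quad (2 \leq i \leq n);\qquad x_1 = \alpha_{\mathrm{old}},\ \ x'_1 = y_1 t - y_2,\ \ \alpha = \alpha_{\mathrm{old}} t - y'_1,
\end{equation*}
with the convention $y_2 = 1$ when $n = 2$. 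A direct substitution shows that the equations of $Z_n$ of indices $2, \ldots, n$ translate term by term into the equations of $Z_{n-1}$ of indices $1, \ldots, n-1$, while the first equation $x_1 x'_1 = 1 + \alpha x_2$ of $Z_n$ reduces, after cancelling the $\alpha_{\mathrm{old}} y_1 t$ terms, to the first equation $y_1 y'_1 = 1 + \alpha_{\mathrm{old}} y_2$ of $Z_{n-1}$.

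For the inverse $Z_n \to \aff{1} \times Z_{n-1}$, I would set $y_i = x_{i+1}$, $y'_i = x'_{i+1}$, $\alpha_{\mathrm{old}} = x_1$, and crucially
\begin{equation*}
t = x'_1 x'_2 - \alpha\, x_3
\end{equation*}
(with $x_3 = 1$ when $n = 2$). The two key identities
\begin{equation*}
x_2(x'_1 x'_2 - \alpha x_3) = x'_1 + x_3,\qquad x_1(x'_1 x'_2 - \alpha x_3) = x'_2 + \alpha,
\end{equation*}
are obtained by combining the first two equations of $Z_n$: use equation $2$ to rewrite $x_2 x'_2$ or $x_1 x_3$, and equation $1$ to rewrite $\alpha x_2$ or $x_1 x'_1$. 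These identities imply both that the inverse map lands in $\aff{1} \times Z_{n-1}$ and, by direct substitution, that the two compositions of the forward and inverse morphisms recover the identity on each side.

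The main obstacle is identifying the correct polynomial expression for $t$. The forward map formally imposes $t = (x'_1 + x_3)/x_2 = (\alpha + x'_2)/x_1$ in the fraction field, but neither ratio is visibly a regular function on $Z_n$; the combined form $t = x'_1 x'_2 - \alpha x_3$ is the polynomial witness that both ratios coincide as a global regular function. Once this formula is found, every remaining step is a routine algebraic check using only the defining relations.
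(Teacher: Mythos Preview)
Your argument is correct. The forward and inverse morphisms check out exactly as you describe: the identities $x_2(x'_1 x'_2 - \alpha x_3) = x'_1 + x_3$ and $x_1(x'_1 x'_2 - \alpha x_3) = x'_2 + \alpha$ follow from equations 1 and 2 of $Z_n$ precisely as you indicate, the boundary conventions at $n=2$ are consistent, and both composites reduce to the identity using only the defining relations. Nothing is missing.

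As for comparison with the paper: the paper does not give a proof at all, but simply cites \cite[Prop.~3.6]{nombre_points}. So your explicit inductive construction is strictly more than what the paper provides here. Whether it matches the argument in the cited reference I cannot say from the present text, but your proof is self-contained and elementary, which is a genuine advantage in this context since the result is used as a building block for the cohomology computations of \S\ref{auxi_section_A}.
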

\begin{proof}
  This has been proved in \cite[Prop. 3.6]{nombre_points}.
\end{proof}

Therefore, the  cohomology of $Z_n$ is known for all $n$:
\begin{equation}
  \HH^k(Z_n) =
  \begin{cases}
    \QQ 1 &\text{ if } k=0,\\
    0 &\text{ if } k>0.
  \end{cases}
\end{equation}
The Hodge structure on $\HH^0(Z_n)$ is $\QQ(0)$.

Let us now compute the cohomology of $Y_n$ by induction. This uses the
Mayer-Vietoris long exact sequence for the covering of $Z_n$ by the
two open sets $U(x_1)$ and $U(\alpha)$.

First, let us note that $U(\alpha) \simeq Y_n $ by definition. Next,
one finds that $U(x_1) \simeq \aff{1} Y_{n-1}$. Indeed one can
eliminate $x'_1$ using the first equation. Then $\alpha$ becomes a
free variable, and there remains the equations for $Y_{n-1}$, with
$x_1$ now playing the role of $\alpha$. Last, the intersection
$U(\alpha) \cap U(x_1)$ is isomorphic to $ \gm Y_{n-1}$, by the same
argument.

Let us write $\omega_{\alpha}$ for $d \log(\alpha)$.

\begin{proposition}
  The cohomology ring of $Y_n$ has the following description:
  \begin{equation}
    \HH^k(Y_n) = \QQ(-k)
  \end{equation}
  for $0 \leq k \leq n+1$. It has a basis given by
  powers of $\WP$ in even degrees and by powers of $\WP$ times
  $\omega_{\alpha}$ in odd degrees. It is generated by the $1$-form
  $\omega_{\alpha}$ and the $2$-form $\WP$.  
\end{proposition}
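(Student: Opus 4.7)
The plan is to argue by induction on $n$, taking as base case $n=0$, where $Y_0 = \gm$ has cohomology $\HH^0 = \QQ(0)$ and $\HH^1 = \QQ \cdot \omega_\alpha = \QQ(-1)$, in agreement with the statement. For the inductive step, I would feed the identifications $U(\alpha) = Y_n$, $U(x_1) \simeq \aff{1} \times Y_{n-1}$ and $U(\alpha)\cap U(x_1) \simeq \gm \times Y_{n-1}$ into the Mayer--Vietoris long exact sequence for the open covering of $Z_n$. Since $Z_n \simeq \aff{n+1}$ has $\HH^{\geq 1}(Z_n) = 0$, for every $k \geq 1$ this sequence collapses to a short exact sequence of pure Hodge structures
\[
0 \longrightarrow \HH^k(Y_n) \oplus \HH^k(Y_{n-1}) \longrightarrow \HH^k(\gm \times Y_{n-1}) \longrightarrow 0.
\]
Künneth together with the inductive hypothesis $\HH^j(Y_{n-1}) = \QQ(-j)$ for $0 \leq j \leq n$ identifies the right-hand side as $\QQ(-k)^2$ for $1 \leq k \leq n$, as $\QQ(-n-1)$ for $k = n+1$, and zero beyond. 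Stripping off the known summand $\HH^k(Y_{n-1}) = \QQ(-k)$ forces $\HH^k(Y_n) = \QQ(-k)$ in the claimed range and zero above; connectedness of the irreducible affine variety $Y_n$ takes care of degree zero.

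To pin down the basis, I would exploit the restriction $\rho \colon \HH^\ast(Y_n) \to \HH^\ast(\gm \times Y_{n-1})$ coming from the open inclusion of the intersection into $Y_n = U(\alpha)$. Under the identification of the intersection with $\gm \times Y_{n-1}$ in which $\alpha$ stays the coordinate on $\gm$ and $x_1$ plays the role of the coefficient $\alpha_{n-1}$ of $Y_{n-1}$, inspection of the edges of the augmented tree gives
\[
\rho(\omega_\alpha) \;=\; \omega_\alpha \otimes 1, \qquad \rho(\WP) \;=\; 1 \otimes \WP_{n-1} + \omega_\alpha \otimes \omega_{\alpha_{n-1}},
\]
where $\WP_{n-1}$ and $\omega_{\alpha_{n-1}}$ are the corresponding classes on $Y_{n-1}$. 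Using $\omega_\alpha^2 = 0$, this expands to
\[
\rho(\WP^j) \;=\; 1 \otimes \WP_{n-1}^j + j\,\omega_\alpha \otimes \WP_{n-1}^{j-1}\omega_{\alpha_{n-1}}, \qquad \rho(\WP^j \omega_\alpha) \;=\; \omega_\alpha \otimes \WP_{n-1}^j.
\]
By the inductive hypothesis, in the admissible range of $j$ at least one of the factors on the right is a non-zero generator of a one-dimensional piece of $\HH^\ast(Y_{n-1})$ (the case $2j = n+1$ with $n$ odd forces use of the second summand, which then equals the top class of $Y_{n-1}$), so $\rho(\WP^j)$ and $\rho(\WP^j \omega_\alpha)$ are non-zero. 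Combined with the dimension count from the first paragraph, non-vanishing upgrades the proposed classes to bases of each one-dimensional graded piece, and hence $\omega_\alpha$ and $\WP$ generate the whole cohomology ring.

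The step I expect to be delicate is the computation of $\rho(\WP)$ on the intersection: one must carry out the coordinate change identifying $U(\alpha)\cap U(x_1)$ with $\gm \times Y_{n-1}$ and verify that the only ``new'' contribution to the Weil--Petersson sum, compared with the $\WP_{n-1}$ built on the edges of the augmented tree for $\TA_{n-1}$, is precisely the cross term $\omega_\alpha \otimes \omega_{\alpha_{n-1}}$ arising from the extra edge $\alpha \,\text{--}\, x_1$ in the augmented tree of $\TA_n$. Once this matching formula is secured, the remainder of the argument amounts to elementary linear algebra with pure Hodge structures and Künneth.
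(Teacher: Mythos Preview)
Your proposal is correct and follows essentially the same route as the paper: the same Mayer--Vietoris sequence for the covering of $Z_n \simeq \aff{n+1}$ by $U(\alpha)=Y_n$ and $U(x_1)\simeq \aff{1}\times Y_{n-1}$, the same collapse to short exact sequences, and the same inductive dimension count. Your treatment of the basis is in fact more explicit than the paper's, which simply asserts that the expected basis of $\HH^k(Y_n)$ together with the known basis of $\HH^k(U(x_1))$ restricts to a basis of $\HH^k(U(\alpha)\cap U(x_1))$; your formula $\rho(\WP)=1\otimes\WP_{n-1}+\omega_\alpha\otimes\omega_{\alpha_{n-1}}$ makes that check concrete.
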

\begin{proof}
  Because of the vanishing of $\HH^k(Z_n)$ for $k>0$, the
  Mayer-Vietoris long exact sequence gives short exact sequences
  \begin{equation*}
    0 \to \HH^0(Z_n) \to \HH^0(Y_n)\oplus\HH^0(U(x_1)) \to \HH^0(U(\alpha) \cap U(x_1)) \to 0,
  \end{equation*}
  and
  \begin{equation*}
    0 \to \HH^k(Y_n)\oplus\HH^k(U(x_1)) \to \HH^k(U(\alpha) \cap U(x_1)) \to 0,
  \end{equation*}
  for every $k > 0$. This determines by induction the Hodge structure
  of the cohomology of $Y_{n}$.

  Let us now proceed to the expected basis. One already knows that
  $\WP$ and $\omega_{\alpha}$ are indeed algebraic differential forms
  on $Y_{n}$.

  By the short exact sequences above, one can check that for $k>0$ the
  union of the expected basis of $\HH^k(Y_n)$ with the known basis of
  $\HH^k(U(x_1))$ is mapped to a basis of $\HH^k(U(\alpha) \cap
  U(x_1))$. This implies the statement.
\end{proof}

\subsubsection{Cohomology for $\TA_n$ with even $n$}

Let us now consider the linear tree $\TA_n$ for even $n$, and compute
the cohomology of $X_n$.
\begin{proposition}
  The Hodge structure of the cohomology of $X_n$ is
  \begin{equation}
    \HH^k(X_n) = \QQ(-k)
  \end{equation}
  for all even $k$ between $0$ and $n$, and $0$ otherwise. A basis is
  given by powers of $\WP$. The cohomology ring is generated by
  $\WP$.
\end{proposition}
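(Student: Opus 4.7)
The plan is to realize $Y_n$ as a direct product $X_n \times \gm$ and then read off the cohomology of $X_n$ from that of $Y_n$ via Künneth.

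First I would observe that for even $n$ every vertex of $\TA_n$ is orange, and in particular vertex $1$, to which the coefficient $\alpha$ is attached in the definition of $Y_n$. Lemma \ref{iso_over_gm_orange_green} then yields an isomorphism $Y_n \simeq X_{\TA_n} \times \gm$ over $\gm$, with $\alpha$ serving as the coordinate on the $\gm$ factor. Since $\TA_n$ is an orange tree, the variety does not depend on its parameters, so $X_{\TA_n}$ coincides with $X_n$ and one gets $Y_n \simeq X_n \times \gm$.

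Next, applying the Künneth formula (compatible with Hodge structures), one has $\HH^k(Y_n) \simeq \HH^k(X_n) \oplus (\HH^{k-1}(X_n) \otimes \QQ(-1))$ for every $k$. The previous proposition gives $\HH^k(Y_n) = \QQ(-k)$ for $0 \leq k \leq n+1$ and $0$ otherwise. Since there is no morphism between pure Hodge structures of distinct weights, the two Künneth summands are separated by weight, and a short induction on $k$ starting from $\HH^0(X_n) = \QQ(0)$ forces $\HH^k(X_n) = \QQ(-k)$ for even $k$ with $0 \leq k \leq n$ and $0$ in all other degrees.

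For the basis and the ring-generation claim, I would use that under the product decomposition the one-form $\omega_\alpha$ pulls back from the $\gm$ factor, while $\WP$ on $Y_n$ differs from the pullback of $\WP$ on $X_n$ only by an additional summand of the form $\omega_\alpha \omega_1$ coming from the extra edge at vertex $1$. Hence in the Künneth decomposition the even powers of $\WP$ on $Y_n$ project onto the powers of $\WP$ on $X_n$, which therefore form the asserted basis and show that $\WP$ generates the cohomology ring.

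The only real obstacle is the initial identification of $Y_n$ with $X_n \times \gm$ via lemma \ref{iso_over_gm_orange_green}; once this is in hand, everything else follows mechanically from Künneth and from the cohomology of $Y_n$ computed earlier.
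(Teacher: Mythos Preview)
Your proof is correct and follows essentially the same route as the paper: both use lemma \ref{iso_over_gm_orange_green} to obtain $Y_n \simeq X_n \times \gm$, then apply K\"unneth to read off the Hodge structure of $X_n$ from the already-computed cohomology of $Y_n$. For the basis, the paper phrases the last step as restriction to the fiber $\alpha = 1$ (which sends $\WP$ on $Y_n$ to $\WP$ on $X_n$), while you phrase it as projection in the K\"unneth decomposition; these are the same map, so the arguments coincide. One small caution: your claim that $\WP$ on $Y_n$ differs from the pullback of $\WP$ on $X_n$ by exactly $\omega_\alpha \omega_1$ is literally true in the original coordinates of $Y_n$, but the explicit product isomorphism from lemma \ref{iso_over_gm_orange_green} rescales several $x_i$ by powers of $\alpha$, so the formula for the pullback is more complicated at the level of forms---this does not matter, since your argument only needs the statement in cohomology, where the restriction-to-a-fiber formulation makes it transparent.
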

\begin{proof}
  This follows from the known cohomology of $Y_n$ and the Künneth
  theorem applied to the isomorphism $Y_n \simeq X_n \gm$ given by
  lemma \ref{iso_over_gm_orange_green}. The Künneth theorem
  gives immediately the Hodge structure. 

  For the basis, it is enough to recall that the $\gm$ factor is given
  by the value of $\alpha$, and to check that fixing the value $\alpha
  = 1$ maps $\WP$ (for $Y_n$) to $\WP$ (for $X_n$).
\end{proof}

\subsection{Cohomology for orange trees of shape $H$}

\begin{center}
{\scalefont{0.7}
\begin{tikzpicture}[scale=0.7]
\usetikzlibrary{patterns,decorations.pathreplacing}
\tikzstyle{every node}=[draw,shape=circle,very thick,fill=white]

\draw (0,0) node {...} -- (1,0) node {...} -- (3,0) node {...} -- (4,0) node {...};
\draw (0,1) node {...} -- (1,1) node {...} -- (3,1) node {...} -- (4,1) node {...};

\draw (2,1) node {$a$} -- (2,0) node {$b$};

\tikzstyle{every node}=[draw=none,fill=none];
\draw [thick,decoration={
        brace,
        raise=0.3cm
    },decorate] (-0.3,1) node {} -- (1.3,1) node {};
\draw [thick,decoration={
        brace,
        raise=0.3cm
    },decorate] (2.7,1) node {} -- (4.3,1) node {};
\draw [thick,decoration={
        brace,
        mirror,
        raise=0.3cm
    },decorate] (-0.3,0) node {} -- (1.3,0) node {};
\draw [thick,decoration={
        brace,
        mirror,
        raise=0.3cm
    },decorate] (2.7,0) node {} -- (4.3,0) node {};

\node at (0.5,-1) {$m$};
\node at (0.5,2) {$k$};
\node at (3.5,-1) {$n$};
\node at (3.5,2) {$\ell$};

\end{tikzpicture}}
\end{center}

Let us denote by $H_{k,\ell,m,n}$ the tree described as two chains joined
by an edge, such that by removing the joining edge and its extremities
$a$ and $b$, one gets two chains of lengths $k$ and $\ell$ on the $a$ side
(top) and two chains of lengths $m$ and $n$ on the $b$ side (bottom).

We assume now that $H_{k,\ell,m,n}$ is an \orange tree. It implies
that either $k, \ell, m$ and $n$ are even if the middle edge is an
orange domino, or that (without loss of generality) $k$ and $m$ are
odd and $l$ and $n$ are even otherwise.

Then one can compute the cohomology of $H_{k,\ell,m,n}$ using the
Mayer-Vietoris long exact sequence for the open covering by $U(x_a)$
and $U(x_b)$. 

When the middle edge is an orange domino, one has 
\begin{equation}
  \label{iso_middle_dom}
\begin{aligned}
  U(x_a) &\simeq X_k X_\ell Y_{m+n+1},\\
  U(x_b) &\simeq Y_{k+\ell+1} X_m X_n ,\\
  U(x_a) \cap U(x_b) &\simeq (\gm)^2 X_k X_\ell X_m X_n.
\end{aligned}
\end{equation}
When the middle edge is not an orange domino, one finds instead
\begin{equation}
  \label{iso_middle_not_dom}
\begin{aligned}
  U(x_a) &\simeq Y_k X_\ell X_{m+n+1},\\
  U(x_b) &\simeq X_{k+\ell+1} Y_m X_n,\\
  U(x_a) \cap U(x_b) &\simeq Y_k X_\ell Y_m X_n.
\end{aligned}
\end{equation}
Let us introduce some notations: call $K,L,M,N$ the subsets of
vertices corresponding to the four branches of $H$ (\textit{i.e.} the
connected components of $H \setminus \{a,b\}$).

Let us denote by $W_S$ the Weil-Petersson $2$-form associated with a
subset $S$ of the vertices of $H$. For conciseness, one will use
shortcuts such as $W_{KaL}$ or $W_{MabN}$. Note that there holds
\begin{equation*}
  \omega_a W_{aL} = \omega_a W_{L}
\end{equation*}
and other similar simplifications, by the definition \eqref{def_wp} of these forms.

Let us now describe generators and bases of the cohomology of the open
sets $ U(x_a)$, $U(x_b)$ and $U(x_b)\cap U(x_b)$. This can be computed
using the isomorphisms \eqref{iso_middle_dom},
\eqref{iso_middle_not_dom} and the known cohomology of varieties $X$
and $Y$. It turns out that the result does not depend on whether or
not the middle edge $a-b$ is an orange domino.

The cohomology of $U(x_a)$ is generated by $\omega_a$, $W_{Ka}$, $W_{aL}$
and $W_{MabN}$. A basis is given by
\begin{equation}
  \label{base_ua}
  W_{Ka}^{\kappa} W_{aL}^{\lambda} W_{MabN}^{B} \quad\text{and}\quad
  \omega_a W_{K}^{\kappa} W_{L}^{\lambda} W_{MbN}^{B},
\end{equation}
where $0 \leq \kappa \leq k/2$, $0 \leq\lambda \leq l/2$ and $0 \leq B \leq (m+n+2)/2$ (left) or $0 \leq B \leq (m+n)/2$ (right). 

Similarly, the cohomology of $U(x_b)$ is generated by $\omega_b$,
$W_{Mb}$, $W_{bN}$ and $W_{KabL}$. A basis is given by
\begin{equation}
  \label{base_ub}
  W_{Mb}^{\mu} W_{bN}^{\nu} W_{KabL}^{A}  \quad\text{and}\quad
  \omega_b W_{M}^{\mu} W_{N}^{\nu} W_{KaL}^{A},
\end{equation}
where $0 \leq\mu \leq m/2$, $0 \leq\nu \leq n/2$ and $0 \leq A \leq (k+l+2)/2$ (left) or $0 \leq A \leq (k+l)/2$ (right).

The cohomology of $U(x_b)\cap U(x_b)$ is generated by $\omega_a$,
$\omega_b$, $W_{Mb}$, $W_{bN}$, $W_{Ka}$ and $W_{aL}$. A basis is given by
\begin{equation}
  \label{base_ua_ub}
\begin{aligned}
  W_{Ka}^{\kappa} W_{aL}^{\lambda} W_{Mb}^{\mu} W_{bN}^{\nu},\quad
  \omega_a W_{K}^{\kappa} W_{L}^{\lambda} W_{Mb}^{\mu} W_{bN}^{\nu},\\
  \omega_a \omega_b W_{K}^{\kappa} W_{L}^{\lambda} W_{M}^{\mu} W_{N}^{\nu}
  \quad\text{and}\quad
  \omega_b W_{Ka}^{\kappa} W_{aL}^{\lambda} W_{M}^{\mu} W_{N}^{\nu},
\end{aligned}
\end{equation}
with the same conditions as above on $\kappa,\lambda,\mu$ and $\nu$.

There is a bigrading corresponding to the top and bottom parts of
the $H$ shape. Every differential form involved in the bases just
described is a sum of products of $\omega_i$. The bidegree of a
monomial in the $\omega_i$ is the pair (number of $\omega_i$ where $i$
is in the top row, number of $\omega_i$ where $i$ is in the
bottom row). Among the various Weil-Petersson forms involved,
only the differential forms $W_{KabL}$ and $W_{MabN}$ are not
homogeneous for the bidegree, but have terms in bidegrees $(2,0)$ and
$(1,1)$ (resp. $(0,2)$ and $(1,1)$).

One needs now to compute explicitly the following maps in the
Mayer-Vietoris long exact sequence:
\begin{equation*}
  \HH^i(U(x_a))\oplus\HH^i(U(x_b)) \stackrel{f_i}{\longrightarrow} \HH^i(U(x_a) \cap U(x_b)).
\end{equation*}
Because one has bases of all these spaces, this is a matter of matrices.



\smallskip

For odd degree $i$, let us show that the differential is
injective. Because in this case all basis elements (given by right
columns of \eqref{base_ua}, \eqref{base_ub} and \eqref{base_ua_ub})
are homogeneous for the bigrading, one can separate the cases of
bidegree congruent to $(0,1)$ and to $(1,0)$ modulo $(2,2)$. Let us
give details only for the first possibility, the other case being
similar after exchanging top and bottom of $H$. The basis of the
corresponding bihomogeneous subspace of $\HH^i(U(x_b))$ is given by
$\omega_{b}W_{KaL}^{A}W_{M}^{\mu} W_{N}^{\nu}$ with $i =
1+2A+2\mu+2\nu$. The corresponding bihomogeneous subspace of
$\HH^i(U(x_a))$ is zero. The basis of the corresponding bihomogeneous
subspace of $\HH^i(U(x_a) \cap U(x_b))$ is given by
$\omega_{b}W_{Ka}^{\kappa}W_{aL}^{\lambda} W_{M}^{\mu} W_{N}^{\nu}$
with $i = 1+2\kappa+2\lambda+2\mu+2\nu$. But $W_{KaL}^{A}$ can be
written as a linear combination of $W_{Ka}^{\kappa}W_{aL}^{\lambda}$
with $\kappa + \lambda = A$. Therefore the basis elements are mapped
to linear combinations with disjoint supports. It follows that the map
$f_i$ is injective.

\smallskip

Let us now turn to even degrees.
\begin{proposition}
  For even degree $2i$, the kernel of the differential $f_{2i}$ has
  dimension $1$, spanned by the $i^{th}$ power of the form $\WP$.
\end{proposition}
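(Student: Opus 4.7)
The plan is to compute the two restriction maps
$\HH^{2i}(U(x_a)) \to \HH^{2i}(U(x_a)\cap U(x_b))$
and
$\HH^{2i}(U(x_b)) \to \HH^{2i}(U(x_a)\cap U(x_b))$
explicitly in the bases \eqref{base_ua}, \eqref{base_ub}, \eqref{base_ua_ub}, and then solve the resulting linear system for $\ker f_{2i}$. Observe first that $\WP$ is globally defined on $X_T$, so the diagonal class $(\WP^i,\WP^i)$ already lies in $\ker f_{2i}$; the task is to show no other kernel element exists.

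The key identities on the intersection are $W_{KabL} = W_{KaL} + \omega_a\omega_b$, $W_{MabN} = W_{MbN} + \omega_a\omega_b$, $W_{KaL} = W_{Ka}+W_{aL}$ and $W_{MbN} = W_{Mb}+W_{bN}$. Using $(\omega_a\omega_b)^2 = 0$ together with $\omega_a^2 = \omega_b^2 = 0$, one obtains $W_{MabN}^B = W_{MbN}^B + B\,\omega_a\omega_b\,W_{MbN}^{B-1}$ and $W_{Ka}^\kappa W_{aL}^\lambda\,\omega_a = W_K^\kappa W_L^\lambda\,\omega_a$. A short computation then shows that the restriction of $W_{Ka}^\kappa W_{aL}^\lambda W_{MabN}^B$ contributes $\binom{B}{\mu}$ times the type (a) basis element at $(\kappa,\lambda,\mu,\nu)$ with $\mu+\nu=B$, and $B\binom{B-1}{\mu}$ times the type (b) basis element at $(\kappa,\lambda,\mu,\nu)$ with $\mu+\nu=B-1$. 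A symmetric formula holds for $W_{Mb}^\mu W_{bN}^\nu W_{KabL}^A$, with $\binom{A}{\kappa}$ and $A\binom{A-1}{\kappa}$ taking over the role.

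Writing a kernel element with coefficients $c^a_{\kappa,\lambda,B}$ and $c^b_{A,\mu,\nu}$ and renormalizing by the multinomial factors $\kappa!\lambda!B!/i!$ and $A!\mu!\nu!/i!$ respectively (so that $\WP^i$ becomes the all-ones vector), the type (a) equations simplify to $\bar c^a_{\kappa,\lambda,B} = \bar c^b_{A,\mu,\nu}$ whenever $A=\kappa+\lambda$ and $B=\mu+\nu$. Since the left side is free of $(\mu,\nu)$ and the right free of $(\kappa,\lambda)$, both depend only on the pair $(A,B)$, giving a single function $d_{A,B}$ defined on the anti-diagonal $A+B=i$. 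The type (b) equations then translate into $d_{A-1,B} = d_{A,B-1}$ for $A+B=i+1$ with $A,B\geq 1$, which chains to $d_{0,i} = d_{1,i-1} = \cdots = d_{i,0}$. Hence $\ker f_{2i}$ is at most one-dimensional, and is therefore exactly spanned by $\WP^i$.

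The main obstacle is the combinatorial bookkeeping: carefully tracking the binomial factors, and checking that the bounds $\kappa\leq k/2$, $\lambda\leq\ell/2$, $\mu\leq m/2$, $\nu\leq n/2$ on the basis indices do not break the propagation chain above. This reduces to verifying that for every $(A,B)$ on the relevant antidiagonal, at least one decomposition $A=\kappa+\lambda$ with $\kappa\leq k/2,\lambda\leq\ell/2$ and one $B=\mu+\nu$ with $\mu\leq m/2,\nu\leq n/2$ exist, which is automatic in the range where $\HH^{2i}$ is non-trivial. A subsidiary check is the consistency of the identity $W_{KabL} = W_{KaL} + \omega_a\omega_b$ on the intersection, where both $\omega_a$ and $\omega_b$ are honest logarithmic $1$-forms.
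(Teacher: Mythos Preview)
Your argument is correct and takes a genuinely different route from the paper. The paper proceeds by embedding $\HH^{2i}(U(x_a)\cap U(x_b))$ into an auxiliary space $D_i$ spanned by degree-$i$ monomials in the edge $2$-forms $\omega_s\omega_t$, observes that every monomial appears (with opposite signs) in the image of exactly one $a$-side basis element and one $b$-side basis element, and then argues that the resulting incidence graph on basis elements is connected (with that combinatorial verification left to the reader). Your approach stays within the given bases \eqref{base_ua}, \eqref{base_ub}, \eqref{base_ua_ub} throughout: you expand the restriction maps using the identities $W_{KabL}=W_{KaL}+\omega_a\omega_b$, $W_{MbN}=W_{Mb}+W_{bN}$, etc., and observe that only the first and third families in \eqref{base_ua_ub} occur. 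The multinomial renormalisation is the key simplification, turning the binomial coefficients into a clean system $d_{A-1,B}=d_{A,B-1}$ on the antidiagonal, which is exactly the connectedness statement in disguise but proved by a one-line telescoping chain rather than a graph argument.

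What your approach buys is explicitness: the kernel element is visibly the all-ones vector in the renormalised coordinates, and the argument is purely linear algebra rather than combinatorics on monomials. What the paper's approach buys is a structural picture independent of coefficients, which may generalise more readily. Your caveat about the index bounds is the honest residual: at the extremes of the antidiagonal (e.g.\ $A=0$ with $B=(m+n+2)/2$) there may be no type-(a) equation, and one must instead use the type-(b) equation directly to link $\bar c^a_{0,0,B}$ into the chain at $d_{1,B-1}$. This is not hard, but ``automatic'' slightly undersells it; the paper is equally casual at the corresponding step.
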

\begin{proof}
  First note that one can define an injective map $\Delta$ from the
  space $\HH^{2i}(U(x_a) \cap U(x_b))$ to the space $D_i$ spanned by
  all products of $i$ $2$-forms of the shape $\omega_{s}\omega_{t}$
  for $s-t$ an edge of the tree (always written in the order given by
  a fixed alternating orientation of the tree). Indeed, both terms in
  the left column of \eqref{base_ua_ub} can be written as linear
  combinations of such products. The injectivity holds because
  distinct elements in this part of the basis are mapped to linear
  combinations with disjoint supports. To recover a basis element $B$
  from any monomial in its image by $\Delta$, first count in
  $\Delta(B)$ if the number of $\omega_k$ in the top row is odd or
  even. This tells if the basis elements $B$ contains
  $\omega_a\omega_b$ or not. Then it is easy to recover the exponents
  $(\kappa, \lambda, \mu, \nu)$ defining $B$ by counting in
  $\Delta(B)$ how many $\omega_k$ there are in the different parts of
  the tree.

  To prove the statement of the proposition, it is therefore enough to
  compute the kernel of the composite map $\Delta \circ f_{2i}$.

  It turns out that the matrix of this composite map has a nice
  description. First, every monomial $d$ made of $i$ $2$-forms
  $\omega_{s}\omega_{t}$ as above appears in exactly two images, the
  image of a form $W_{Ka}^{\kappa} W_{aL}^{\lambda} W_{MabN}^{B}$ and
  the image of a form $W_{Mb}^{\mu} W_{bN}^{\nu} W_{KabL}^{A}$ (with
  opposite signs). Let us denote these two forms by $\mathsf{F}_a(d)$ and
  $\mathsf{F}_b(d)$. On the other hand, the image of every basis element is
  the sum of several monomials (at least one), with constant sign.

  Let us pick an element $z$ of the kernel of $f_{2i}$. Then for every
  monomial $d$ in $D_i$, the coefficients of $\mathsf{F}_a(d)$ and
  $\mathsf{F}_b(d)$ in $z$ must be the same. One can make a graph with
  vertices given by all forms in the basis, and edges corresponding to
  the relations $\mathsf{F}_a(d)-\mathsf{F}_b(d)$ for all monomials
  $d$.

  By a combinatorial argument, one can check that this graph is
  connected. For this, one just has to show that one can go from any
  monomial $d$ to any monomial $d'$, using two kinds of moves: replace
  $d$ by another monomial appearing in the same $\mathsf{F}_a(d)$, or
  replace $d$ by another monomial appearing in the same
  $\mathsf{F}_b(d)$. This is not difficult once translated in terms of
  dominoes, and details are left to the reader.

  From the connectedness of this graph, one deduces that the kernel is
  spanned by the sum of all basis elements of $\HH^{2i}(U(x_a))\oplus
  \HH^{2i}(U(x_b))$, which is just $(\WP^i,\WP^i)$.




\end{proof}

This proposition and the injectivity in the case of odd
degree allow to give a description of the weights of the Hodge
structure on the cohomology. This can easily be made explicit, but one
will not do that here.

There would remain to find explicit expressions for the cohomology
classes coming from the co-image of the differentials $f_i$.

\medskip

In the case of the Dynkin diagrams $\TE_6$ and $\TE_8$, one can go
further and compute explicit representatives of the cohomology classes.

By the general proof, the cohomology for $\TE_6$ is described by
\begin{equation*}
  \QQ(0) \mid 0 \mid \QQ(-2) \mid 0 \mid \QQ(-3)\oplus \QQ(-4) \mid 0 \mid \QQ(-6),
\end{equation*}
where the $\QQ(-i)$ with $i$ even correspond to the powers of $\WP$.

Using the connection homomorphism in the long exact sequence, one finds that
the form
\begin{equation}
  dx_2 dx_3 dx_5 \omega_4
\end{equation}
corresponds to $\QQ(-3)$.





Similarly, the cohomology for $\TE_8$ is described by
\begin{equation*}
  \QQ(0) \mid 0 \mid \QQ(-2) \mid 0 \mid \QQ(-3)\oplus \QQ(-4) \mid 0 \mid \QQ(-5)\oplus \QQ(-6) \mid 0 \mid \QQ(-8),
\end{equation*}
where the even $\QQ(-i)$ are the powers of $\WP$.

One finds that the form
\begin{equation}
  dx_2 dx_3 dx_5 \omega_4  
\end{equation}
corresponds to $\QQ(-3)$, and its product by $\WP$ corresponds to $\QQ(-5)$.

\begin{center}
{\scalefont{0.7}
\begin{tikzpicture}[scale=0.7]
\tikzstyle{every node}=[draw,shape=circle,very thick,fill=white]

\draw (0,0) node[fill=orange!20] {1} -- (1,0) node[fill=orange!20] {2} -- (2,0) -- (3,0) node[fill=orange!20] {5} -- (4,0) node[fill=orange!20] {6} -- (5,0) node[fill=orange!20] {7} -- (6,0) node[fill=orange!20] {8};

\draw (2,1) node[fill=orange!20] {3} -- (2,0) node[fill=orange!20] {4};

\end{tikzpicture}}
\end{center}

\section{Cohomology: generic cases}

This section contains one conjecture and one result in some specific
cases about the cohomology of generic fibers.

\subsection{Cohomology for $\TA$ odd and generic}

Let us now consider the linear tree $\TA_n$ for odd $n$, which is
unimodal. In this section, one proposes a conjectural description for
the cohomology of the variety $X_{\TA_n}^\gen$ (which is also denoted
$X_n$ in \S \ref{auxi_section_A}).

\begin{conjecture}
  The Hodge structure on the cohomology of $X_n$ is given by
  \begin{equation}
    \HH^k(X_n) = \QQ(-k)
  \end{equation}
  for even $k$ in $0 \leq k \leq (n-1)$, and
  \begin{equation}
    \HH^{n}(X_n) = \oplus_{i=(n+1)/2}^{n} \QQ(-i).
  \end{equation}
  The cohomology ring has a basis given by all powers $\WP^i$ for
  $0 \leq i \leq (n-1)/2$ and by a basis of $\HH^{n}(X_n)$. The
  cohomology ring is generated by $\WP$ in degree $2$ and by the
  elements of $\HH^{n}(X_n)$ in degree $n$.
\end{conjecture}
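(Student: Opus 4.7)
The plan is to proceed by induction on odd $n$, leveraging the geometric decomposition implicit in Lemma \ref{number_generic} and the Gysin long exact sequence. For the base case $n=1$, the variety $X_1$ is $\gm$ by the first paragraph of the proof of Theorem \ref{main_smooth}, giving $\HH^0 = \QQ(0)$ and $\HH^1 = \QQ(-1)$, matching the statement.

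For the inductive step, let $v$ be the red leaf at one end of $\TA_n$ and $u$ its green neighbor. By Lemma \ref{number_generic} (read as a geometric decomposition, which is its content), the variety $X_n$ is the disjoint union of an open and a closed piece:
\begin{equation*}
  U := \{x_v \neq 0\} \simeq \gm \times X_{\TA_{n-1}},
  \qquad
  Z := \{x_v = 0\} \simeq \aff{1} \times X_{n-2},
\end{equation*}
where the first isomorphism comes from Lemma \ref{smooth_generic} combined with the fact that $\TA_{n-1}$ is orange (so by Lemma \ref{iso_over_gm_orange_green} the parameter $x_v$ detaches), and the second from the analysis in the proof of Lemma \ref{number_generic}. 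Both pieces are smooth, and $Z$ has codimension one in $X_n$. Using the Künneth formula together with the already-established cohomology of $X_{\TA_{n-1}}$ (the even case), one gets $\HH^k(U) = \QQ(-k)$ for $0 \leq k \leq n$. The cohomology of $Z$ is given by the induction hypothesis: $\QQ(-k)$ for $k$ even in $[0,n-3]$, and $\oplus_{i=(n-1)/2}^{n-2} \QQ(-i)$ for $k = n-2$, zero otherwise.

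Now feed this into the Gysin long exact sequence
\begin{equation*}
  \cdots \to \HH^{k-2}(Z)(-1) \to \HH^k(X_n) \to \HH^k(U) \to \HH^{k-1}(Z)(-1) \to \cdots .
\end{equation*}
For $k$ odd with $k < n$, the source $\HH^{k-2}(Z)(-1)$ vanishes by the inductive description, so $\HH^k(X_n)$ injects into $\HH^k(U) = \QQ(-k)$. For $k$ even with $2 \leq k \leq n-1$, one has on the left $\HH^{k-2}(Z)(-1) = \QQ(-(k-1))$ (weight $2k-2$) and on the right $\HH^k(U) = \QQ(-k)$ (weight $2k$); the middle group $\HH^k(X_n)$ is a priori an extension of these, and the connecting map arriving at the right-hand $\QQ(-(k-1))$ comes from $\HH^{k-1}(U)=\QQ(-(k-1))$. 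One then argues that the connecting maps must be isomorphisms on the matching Tate pieces (weight $2(k-1)$), which kills the $\QQ(-(k-1))$ contribution and leaves $\HH^k(X_n) = \QQ(-k)$ as required. For $k = n$, the map $\HH^{n-1}(U) \to \HH^{n-2}(Z)(-1)$ must vanish (for weight reasons, its source is pure of weight $2(n-1)$ while the image of the would-be boundary lands in several distinct weights); hence $\HH^n(X_n)$ is an extension of $\HH^n(U) = \QQ(-n)$ by the image of $\HH^{n-2}(Z)(-1) = \oplus_{j=(n+1)/2}^{n-1} \QQ(-j)$, and the extension splits automatically since all weights are distinct, giving exactly $\oplus_{i=(n+1)/2}^{n}\QQ(-i)$.

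The ring structure follows once one checks that the class $\WP \in \HH^2(X_n)$ restricts to the known generator of $\HH^2(U)$ via the Künneth description (coming from $\WP$ on the $X_{\TA_{n-1}}$ factor); then its powers $\WP^i$ for $0 \leq i \leq (n-1)/2$ form the basis of $\HH^{2i}(X_n)$ in even degrees below $n$, and the $\HH^n$-classes generate the remaining part. The main obstacle is justifying the vanishings and isomorphisms of the Gysin connecting maps: weight reasons alone constrain the possibilities, but actually forcing the map to be an isomorphism (and not merely injective or zero with the right Euler-characteristic output) seems to require an explicit geometric representative — most naturally a residue computation along $Z$ for the cohomology classes $\WP^i$ and $\omega_\alpha \WP^i$ transferred from $Y_n$. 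Equivalently, one could combine injectivity of $\HH^k(X_n) \hookrightarrow \HH^k(U)$ with the point-count formula of Proposition \ref{nbrA} to pin down dimensions, but verifying injectivity in the critical odd degrees seems to be exactly the delicate step that has prevented the author from proving the conjecture.
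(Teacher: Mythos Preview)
The statement is a \emph{conjecture} in the paper, so there is no proof to compare against; the paper only sketches a possible route via the \v{C}ech spectral sequence for the covering by the open sets $U(x_i)$ ($i$ odd), observes that it degenerates at $E_2$ by purity, and then concedes that computing the \v{C}ech differential is ``a rather intricate question'' left open (with computer verification for $n\leq 11$). Your approach---induction on odd $n$ via the Gysin sequence for the pair $(U,Z)$ coming from Lemma~\ref{number_generic}---is a genuinely different and in some ways more economical strategy, since it reduces everything to a single codimension-one stratification rather than an $(n{+}1)/2$-fold covering. The geometric decomposition you write down is correct, as is the identification of the cohomology of $U$ and $Z$.

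However, your argument contains a concrete error and, beyond that, runs into exactly the obstruction you acknowledge at the end. The error is in the $k=n$ step: you claim that the connecting map $\HH^{n-1}(U)\to \HH^{n-2}(Z)(-1)$ must vanish ``for weight reasons'' because the source is pure of weight $2(n-1)$ while the target has several weights. But the target $\HH^{n-2}(Z)(-1)=\bigoplus_{i=(n+1)/2}^{n-1}\QQ(-i)$ \emph{does} contain a $\QQ(-(n-1))$ summand, so weight considerations do not exclude a nonzero map. In fact, for the conjecture to hold you need precisely that this particular connecting map is zero, \emph{whereas} the connecting maps $\HH^{k}(U)\to\HH^{k-1}(Z)(-1)$ for odd $k\leq n-2$ (between one-dimensional pieces of identical weight $\QQ(-k)$) must all be isomorphisms. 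Distinguishing these two behaviours is the heart of the problem, and neither your Gysin approach nor the paper's \v{C}ech approach settles it without an explicit residue or differential computation. Your closing paragraph correctly diagnoses this; the proposal is therefore not a proof but a reformulation of the difficulty, with the one weight-argument claim needing retraction.
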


One approach for this computation would be using the covering of $X_n$
by the $(n+1)/2$ open sets $U(x_i)$ ($i$ odd) given by Lemma
\ref{admi_cover}. One can then consider the spectral sequence for this
covering (where $d_1$ is the deRham differential and $d_2$ is the Cech
differential).

The intersection of open sets in this covering have a simple
description: they are products $\gm$ times two varieties of the type
$X_k$ with $k$ even, times some varieties of type $Y_k$ with $k$ odd.


\begin{lemma}
  This spectral sequence degenerates at $E_2$.
\end{lemma}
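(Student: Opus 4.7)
The plan is to use Hodge-theoretic purity to force the higher differentials to vanish. As the author has just indicated, every non-empty intersection $U_I := \bigcap_{i\in I} U(x_i)$ in this cover of $X_n$ is isomorphic to a product of a copy of $\gm$, two varieties of the type $X_k$ with $k$ even, and some varieties of type $Y_k$ with $k$ odd (obtained by iteratively applying the smoothness decompositions of Section 3 to eliminate the $x'_i$ corresponding to the various $i\in I$ and splitting the remaining linear tree into orange and unimodal sub-chains). From the computations of Section 6.1, together with the standard Hodge structure on $\HH^\bullet(\gm)$, each factor has pure Tate cohomology with $\HH^q$ a direct sum of copies of $\QQ(-q)$, hence pure of weight $2q$. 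By the Künneth formula, which is compatible with mixed Hodge structures, we conclude that $\HH^q(U_I)$ is pure of weight $2q$ for every $I$ and every $q$.

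The \v{C}ech--de Rham spectral sequence under consideration has
\begin{equation*}
E_2^{p,q} \;=\; \check{H}^p\bigl(\{U(x_i)\},\,\mathcal{H}^q\bigr) \;\Rightarrow\; \HH^{p+q}(X_n),
\end{equation*}
where $\mathcal{H}^q$ is the assignment $U_I \mapsto \HH^q(U_I)$. By Deligne's theory (see \cite{mixedhodge}), this spectral sequence is compatible with the mixed Hodge structure, so that $E_2^{p,q}$, being a subquotient of $\bigoplus_{|I|=p+1} \HH^q(U_I)$, remains pure of weight $2q$. The same is then true on every later page.

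Now the differentials $d_r : E_r^{p,q} \to E_r^{p+r,\,q-r+1}$ for $r\geq 2$ are morphisms of mixed Hodge structures, and therefore strict with respect to the weight filtration. But their source has pure weight $2q$ while their target has pure weight $2(q-r+1) = 2q - 2(r-1)$; for $r\geq 2$ these two weights differ, so $d_r$ must vanish identically. This is exactly the degeneration at $E_2$.

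The main obstacle is the bookkeeping needed to verify the advertised product structure of every iterated intersection in the cover: one has to check that in each step only $\gm$-factors, orange linear trees (of type $X_k$) and versal unimodal linear trees (of type $Y_k$) appear, never anything of higher dimension type or with affine factors. Once this is done, the weight purity propagates by Künneth and the rest of the argument is automatic from Hodge theory.
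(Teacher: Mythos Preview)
Your proposal is correct and follows essentially the same approach as the paper: the paper's own proof is the single sentence ``This follows from the purity of the Hodge structure on the cohomology of the open sets in the covering,'' and you have simply unpacked this by making explicit the K\"unneth argument for the intersections and the weight comparison that kills the higher differentials.
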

\begin{proof}
  This follows from the purity of the Hodge structure on the
  cohomology of the open sets in the covering.
\end{proof}

It would therefore be enough to understand the behavior of the Cech
differential acting on the cohomology groups of the open sets. This is
still a rather intricate question. The conjecture has been checked by
computer for $n \leq 11$. Maybe one should look for a better approach.


\begin{remark}
  To give an explicit description of the generators of the top
  cohomology group seems to be an interesting problem.
\end{remark}

\subsection{Cohomology for $\TD$ odd and generic}

Let us now consider the tree $\TD_n$ for odd $n$, which is
unimodal. Our aim is to compute the cohomology of the variety
$X_{\TD_n}^\gen$.

One will assume that the generic parameter $\alpha$ is attached to the
vertex $1$, where $1$ and $2$ are the two red vertices on the short
branches. By Lemma \ref{admi_cover}, one has a covering by $U(x_1)$
and $U(x_2)$. One will use the Mayer-Vietoris long exact sequence for
this covering. One has
\begin{align*}
  U(x_1) &\simeq \gm X_{n-1},\\
  U(x_2) &\simeq \gm X_{n-1},\\
  U(x_1) \cap U(x_2) & \simeq \gm Y_{n-2}.
\end{align*}

Given the known explicit description of the cohomology rings of
$X_{n-1}$ and $Y_{n-2}$, one can write very explicitly the long exact
sequence.

First note that the Hodge structure of $\HH^k(U(x_1))\oplus
\HH^k(U(x_2))$ is $2\,\QQ(-k)$ for $0\leq k \leq n$. Similarly, the Hodge
structure of $\HH^k(U(x_1) \cap U(x_2))$ is $2\,\QQ(-k)$, unless $k=0$ or
$n$ where it is $\QQ(-k)$.

Using the known basis of the cohomology, one can describe the map
$\rho_k$ from $\HH^k(U(x_1))\oplus \HH^k(U(x_2))$ to $\HH^k(U(x_1)
\cap U(x_2))$. One can see that this map has rank $1$ if $k$ is
even. One can also check that it is an isomorphism if $k$ is odd,
unless $k=n$ where it has rank $1$.

It follows that the Hodge structure on $\HH^k(X_{\TD_n}^\gen)$ is given by
\begin{equation}
  \begin{cases}
    \QQ(-k) \quad& \text{if} \quad k\equiv 0\, (\mod 2),\\
    \QQ(-k+1) \quad &\text{if} \quad k \equiv 1\, (\mod 2),\, k \not\in \{1,n\}\\
    \QQ(-n+1)\oplus \QQ(-n) \quad &\text{if}  \quad k=n.
  \end{cases}
\end{equation}

Moreover, it also follows from the explicit knowledge of the long
exact sequence that the classes in even cohomological degree are just
the powers of the $2$-form $\WP$.

One can also see that the Hodge structure $\QQ(-n)$ in cohomological
degree $n$ is given by the differential form $\Lambda_{i=1}^{n} \omega_i$.

There remains to understand the even Hodge structures present in odd
cohomological degrees.

By a small diagram chase, and using the formula
\begin{equation}
  \frac{1-\alpha}{x_1 x_2} = \frac{x'_1}{x_2} - \alpha \frac{x'_2}{x_1},
\end{equation}
one finds that a basis of the $\QQ(-2)$ part of $\HH^3(X_{\TD_n}^\gen)$ is
given by the differential form
\begin{equation}
  dx_3 \omega_1 \omega_2.
\end{equation}
Moreover, a similar computation shows that products of this form by
powers of $\WP$ give a basis for the even Hodge structures in odd
cohomological degrees.

The cohomology ring is therefore generated by one generator in each
degree $2$, $3$ and $n$ (of Hodge type $\QQ(-2)$, $\QQ(-2)$ and $\QQ(-n)$).

\appendix
\section{Algorithm for the canonical coloring of trees}

\label{algosection}

Let us now describe an algorithm to find the \red-\orange-\green
coloring. Let $T$ be a tree.

\begin{enumerate}
\item At start, all vertices are considered to be \red.

\item Then, one changes the colors according to the following rule:

  If a vertex $v$ has exactly one \red neighbor $w$, this \red neighbor
  becomes \green.

  If moreover $v$ is \green, then one puts a domino on the edge $v-w$.

\item One repeats the previous step until no color can change.

\item Then one colors in \orange the \green vertices that do not have
  a \red neighbor.
\end{enumerate}

One gets in that way a coloring of the tree with \green, \orange and
\red vertices, together with a collection of dominoes.

\begin{proposition}
  This algorithm defines the same coloring as in section \ref{section1}. Moreover the dominoes
  obtained are those that are present in all maximum matchings.
\end{proposition}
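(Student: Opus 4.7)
The plan is to prove the first claim (that the algorithm recovers the canonical coloring) by showing its output satisfies the three defining properties of the third description in Section \ref{section1}, which then forces it to coincide with the canonical coloring by uniqueness. Termination of the loop in step 3 is immediate, since colors change only monotonically (red to green in the loop, green to orange in step 4), so the loop runs at most $|T|$ iterations.

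At the moment the loop exits, the stopping condition tells us that every vertex has either $0$ or at least $2$ red neighbors. Looking at the subgraph $T_R$ induced on the red vertices, every vertex has degree $0$ or $\geq 2$ there; since $T_R$ is a forest, any component with $\geq 2$ vertices would be a tree with minimum degree $\geq 2$, which is impossible. Hence $T_R$ consists of isolated vertices, so red vertices form an independent set, giving condition (c) of the third description. In step 4 only green vertices with zero red neighbors are recolored, so the remaining green vertices have $\geq 2$ red neighbors (condition (b)); moreover a green vertex adjacent to a red vertex has at least one red neighbor and is therefore not recolored, so no red--orange edge is created and condition (c) is preserved.

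The main obstacle is condition (a), that the induced forest on orange vertices admits a perfect matching; the plan is to realize this matching by the algorithm's dominoes. I would prove in sequence three statements: first, that the dominoes are vertex-disjoint, observing that when a rule places $\{v,w\}$, all other neighbors of $v$ are already green, so $v$ has zero red neighbors thereafter and cannot fire again or (being green) appear as a receiving end later, and a more delicate argument tracks what happens if $w$ still has red neighbors; second, that both endpoints of each domino end up orange, which is immediate for $v$ and for $w$ follows by noting that if $w$'s red-neighbor count ever reached $1$ after $w$ became green then the rule would fire at $w$ again, so by monotonicity the count must in fact have gone to zero; third, that every orange vertex lies in some domino, by tracing back to the rule that first turned it green and using the orange hypothesis to force the firing neighbor to have been green at the time. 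This matching step is the delicate one and would most naturally be carried out by induction on the iteration number of the loop (or equivalently on tree size), with a careful case analysis of whether each firing neighbor was red or green at the moment it acted.

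Finally, to deduce the statement about maximum matchings, I would invoke Proposition \ref{precise_matching}: in any maximum matching of $T$, the orange vertices are matched in pairs by a single fixed family of dominoes (the green--red pairings being the flexible part). Since the algorithm's dominoes constitute a perfect matching on the orange vertices by the previous step, they must coincide with this uniquely determined orange--orange family, and are therefore exactly the dominoes present in every maximum matching.
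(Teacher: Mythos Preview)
Your overall architecture matches the paper's: verify the three conditions of the third characterization in Section~\ref{section1}, and then invoke Proposition~\ref{precise_matching} for the statement about maximum matchings. Your treatment of conditions (b) and (c) is fine (your forest argument for the independence of the red set is a clean variant of the paper's infinite-path argument), and your final paragraph on maximum matchings is correct.

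The genuine gap is in condition (a). You correctly isolate the three claims (dominoes are disjoint; both endpoints become orange; every orange vertex is covered), but your arguments for them are incomplete, and in one place flawed. For claim~2 you write that ``if $w$'s red-neighbor count ever reached $1$ after $w$ became green then the rule would fire at $w$ again, so by monotonicity the count must in fact have gone to zero.'' This does not exclude the possibility that $w$ keeps $\geq 2$ red neighbors throughout and ends up green rather than orange; and if the count \emph{does} pass through $1$, the rule firing at $w$ would place a second domino on $w$, contradicting claim~1, whose ``more delicate argument'' for the $w$-side you never supply. Claim~3 has a similar hole: when you trace the orange vertex back to the step that turned it green, the firing neighbor could have been red at the time, so no domino need have been created then.

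The paper closes all three gaps with a single lemma: during the run of the algorithm, the configuration $u$ red, $v$--$w$ a domino (both green), with $u$ adjacent to $v$, never occurs. This is proved by tracing the domino's creation backwards and producing an infinite chain of vertices, impossible in a finite tree. The lemma immediately gives that a newly created domino already has both endpoints surrounded only by green vertices; disjointness, orange-ness of endpoints, and coverage of all orange vertices then follow in a few lines. Your sketch would be completed by proving this lemma (or an equivalent statement) rather than by the induction/case analysis you propose.
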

\begin{proof}
  At the end of step $3$, one has obtained a tree with red and green
  vertices, with the property that every vertex has either no red
  neighbor or at least two red neighbors.

  Let us prove that a red vertex can not have at least two red
  neighbors. Assume that there is such a vertex $v_1$. Let $v_2$ be
  one of its red neighbors. Then $v_2$ must also have at least two red
  neighbors. Hence one can find another red neighbor $v_3$ of
  $v_2$. Going on in this way, and because $T$ is a tree, one can
  build an infinite sequence of red vertices, which is absurd.

  So, after step $3$, one has three kinds of vertices: red vertices
  (they have only green neighbors), green vertices with no red
  neighbors and green vertices with at least two red neighbors.
  
  It follows that after step $4$, one has the following situation: red
  vertices with only green neighbors, green vertices with at least two
  red neighbors, and orange vertices with no red neighbors.

  Using the third characterization of the coloring, it just remains to
  prove that the induced forest on orange vertices has a perfect
  matching. This matching is provided by the set of dominoes computed
  by the algorithm. When a domino is introduced, both its vertices are
  green. We need a lemma.

  \begin{lemma}
    During the algorithm, the configuration
    \begin{equation*}
      \colorbox{red!20}{$u$} - \colorbox{green!20}{$v$} - \colorbox{green!20}{$w$}
    \end{equation*}
    where $u$ is red and $v-w$ is a domino, does not appear.
  \end{lemma}
  \begin{proof}
    Let us assume the contrary, and let $u-v-w$ be such a configuration.

    Because $v$ still has a red neighbor, the domino $v-w$ must have
    been created by turning green the vertex $v$ as the last red
    neighbor of the green vertex $w$.

    Let us go back to this previous step of this algorithm, where $u$
    and $v$ are red, $w$ is green with $v$ as only red neighbor.
    \begin{equation*}
      \colorbox{red!20}{$u$} - \colorbox{red!20}{$v$} - \colorbox{green!20}{$w$}
    \end{equation*}

    So $w$ must have another neighbor $z$, such that $w$ has turned
    green as the last red neighbor of $z$.
    \begin{equation*}
      \colorbox{red!20}{$u$} - \colorbox{red!20}{$v$} - \colorbox{green!20}{$w$}- \colorbox{green!20}{$z$}
    \end{equation*}

    One can assume, by changing maybe the order in which the algorithm
    has been performed, that $z$ has turned green before $w$. This is
    because trees are bipartite, and the algorithm can be run
    independently on the two parts of the bipartition.

    Therefore, $w$ has turned green as the last red neighbor of
    the green vertex $z$, and hence belongs to a domino $w-z$. Hence
    one has found a configuration $v-w-z$ similar to the initial one:
    \begin{equation*}
      \colorbox{red!20}{$v$} - \colorbox{green!20}{$w$} - \colorbox{green!20}{$z$}.
    \end{equation*}

    This can be iterated to provide an infinite sequence of
    vertices. This is absurd.
  \end{proof}
  
  It follows from the lemma that once a domino is created, its
  vertices do not have any red neighbors. Therefore they will be
  orange at the end. 

  This also implies that the dominoes are disjoint, because the
  creation of a domino takes a red vertex with only green neighbors
  and a green vertex with exactly one red neighbor, and produces a
  pair of green vertices with only green neighbors. Therefore a vertex can
  only enter once in a domino.

  Moreover, every orange vertex $v$ is in a domino. This is because
  green vertices surrounded only by green vertices can only be
  introduced during the creation of a domino.  
\end{proof}

\begin{remark}
  From the previous proof, one can see that one can modify the
  algorithm as follows: when creating a new domino, color in \orange
  its two vertices, and forget step $4$.
\end{remark}

\bibliographystyle{alpha}
\bibliography{cohomolo}

\end{document}